\numberwithin{equation}{section}
\newtheorem{thm}{Theorem}[section]
\newtheorem{cor}[thm]{Corollary}
\newtheorem{lem}[thm]{Lemma}
\newtheorem{prop}[thm]{Proposition}
\theoremstyle{definition}
\newtheorem{rem}[thm]{Remark}
\newtheorem{exmp}[thm]{Example}
\newcommand{\thmref}[1]{Theorem~\ref{#1}}
\newcommand{\propref}[1]{Proposition~\ref{#1}}
\newcommand{\lemref}[1]{Lemma~\ref{#1}}
\newcommand{\eref}[1]{(\ref{#1})}
\newcommand{\set}[2]{\ensuremath{\left\{ #1\,:\; #2\right\}}}
\newcommand{\N}{\mathds N}
\newcommand{\R}{\mathds R}
\newcommand{\saM}{M_{sa}}
\newcommand{\sstar}{s^\ast(M,M_\ast)}
\newcommand{\centre}[1]{\mathcal Z(#1)}
\newcommand{\vNa}{von Neumann algebra}
\newcommand{\Ca}{$C^\ast$-algebra}
\newcommand{\st}[1]{\xrightarrow{\,s({#1},{#1}_\ast)\,}}
\newcommand{\sorder}[1]{\xrightarrow{\,\tau_{os}(#1)\,}}
\newcommand{\ro}{\ensuremath{\varrho}}
\begin{document}
\title{The order topology for a von Neumann algebra}
\author{Emmanuel Chetcuti\textsuperscript{1}, Jan Hamhalter\textsuperscript{2} and Hans Weber\textsuperscript{3}}

\begin{abstract}
The order topology $\tau_o(P)$ (resp. the sequential order topology $\tau_{os}(P)$) on a poset $P$ is the topology that has as its closed sets those that contain the order limits of all their order convergent nets (resp. sequences).  For a von Neumann algebra $M$ we consider the following three posets: the self-adjoint part $\saM$, the self-adjoint part of the unit ball $\saM^1$, and the projection lattice $P(M)$.  We study the order topology (and the corresponding sequential variant) on these posets, compare the order topology  to the other standard locally convex topologies on $M$, and relate the properties of the order topology to the underlying operator-algebraic structure of $M$.
\end{abstract}
\date{}
\maketitle

\begin{center}  

{\small\textsuperscript{1} Department of Mathematics, Faculty of Science, University of Malta,  Msida MSD.06, Malta\\
E-mail: {\tt emanuel.chetcuti@um.edu.mt}\\
\textsuperscript{2} Czech Technical University in Prague, Faculty of Electrical Engineering, Department of Mathematics, Technick\'a 2\\ 166 27
Prague 6,  Czech Republic\\
E-mail: {\tt hamhalte@math.feld.cvut.cz}\\
\textsuperscript{3}Dipartimento di matematica e informatica,
Universit\'a degli Studi di Udine,
1-33100 Udine,
Italia\\
E-mail: {\tt hans.weber@uniud.it}}

\end{center}

\section{Introduction}

Order convergence has been studied in the context of posets and lattices by various authors  \cite{Birkhoff1,Birkhoff2, Kantorovich}  (see also \cite{Frink,McShane,MaAn}).  The  order topology on a poset is defined to be  the finest topology preserving order convergence.

In  \cite{Palko,BCW} the order topology for the lattice of projections  acting on a Hilbert space was studied.  It is the aim of the present paper to give a first systematic treatment of various order topologies associated with  a von Neumann algebra. We show that the  properties of these topologies are nicely connected with the inner structure of the underlying algebra and with the locally convex topologies living on it.

We first consider the self-adjoint part $\saM$ of a von Neumann algebra $M$ and study the order topology $\tau_o(M_{sa})$ induced by the standard operator order.  We prove that when $M$ is $\sigma$-finite,  sequential convergence
w.r.t. $\tau_o(M_{sa})$ coincides with  sequential convergence w.r.t. the $\sigma$-strong topology.
The proof is based on  noncommutative Egoroff's Theorem.
As a consequence, one obtains that on bounded parts of $M_{sa}$  the order topology coincides with any of the locally convex topologies on $M$ that is compatible with the duality $\langle M,M_\ast\rangle$ where $M_\ast$ is the unique predual of $M$.  Our result is sharp in the sense that
the $\sigma$-strong topology coincides with the order topology $\tau_o(\saM)$  if and only if $M$ is finite-dimensional.
The fact that the order topology on ordered vector spaces is in general far from being a linear topology makes this coincidence rather surprising. Another interesting feature of this result is the possibility to recover
(on bounded parts)  the locally convex topologies arising from the duality $\langle M,M_\ast\rangle $ (a component of the von Neumann structure) only from the order (a component of the $C^\ast$-structure).  More precisely, we are saying that if $M$ and $N$ are $\sigma$-finite von Neumann algebras such that $\saM$ and $N_{sa}$ are order-isomorphic (i.e. there exists a bijection preserving the order in both directions), then the unit balls $M^1$ and $N^1$ are homeomorphic w.r.t. the $\sigma$-strong topologies. 

We then compare the Mackey topology $\tau(M,M_\ast)$ with the order topology $\tau_o(M_{sa})$ and the sequential variant $\tau_{os}(M_{sa})$. 
The Mackey topology is coarser than  $\tau_{os}(M_{sa})$ and we characterize von Neumann algebras for which $\tau(M,M_\ast)=\tau_{os}(\saM)$.  Indeed, we prove that this happens if and only if $M$ is $\ast$-isomorphic to a countable direct sum of finite-dimensional full matrix algebras.  From a topological point of view this happens exactly when any of the following conditions is satisfied: {\rm(i)}~$M$ is $\sigma$-finite and $M^1$ is compact w.r.t.  the $\sigma$-strong$^\ast$ topology, {\rm(ii)}~the Mackey topology is sequential, {\rm(iii)}~$M$ is $\sigma$-finite and $\tau_o(\saM)$ is a linear topology.  The proof of these results rest heavily on the technique of mixed topologies. That is why we study mixed topologies and develop results there that we believe can be of independent interest.  Using \cite{Akemann} we  show that the Mackey topology is equal to the mixed topology of the norm topology and the $\sigma$-strong$^\ast$ topology.  This  is in fact a noncommutative extension of the interesting result of M.~Novak \cite{Novak} saying that  the Mackey topology on $L^\infty$ coincides with the mixed topology of the norm topology and the topology of convergence in measure.  Although not investigated here, we believe that this equality can contribute to the problem studied by J.F.~Aarnes \cite{Aarnes} of whether the Mackey topology of a von Neumann subalgebra coincides with the restriction  of the Mackey topology of the big algebra.

In the last section we consider as posets the  projection lattice $P(M)$ and the self-adjoint part of the unit ball $M_{sa}^1$.  Unless the algebra is abelian, the order topology on neither of these posets  coincides with the restriction of the global order topology $\tau_o(\saM)$.  In fact, we show that if $M$ is $\sigma$-finite then the following conditions are equivalent: {\rm(i)}~$M$ is of finite Type, {\rm(ii)}~the order topology on $M_{sa}^1$ and the $\sigma$-strong operator topology restricted to $M_{sa}^1$ have the same  null sequences {\rm(iii)}~the order topology on the projection lattice $P(M)$ and the $\sigma$-strong operator topology restricted to $P(M)$ have the same  null sequences. This gives a new characterization of finite von Neumann algebras. 
   
The paper is organized as follows. Section~2 collects basic facts on the order topology on posets and  ordered vector spaces needed later. In  Section~3 results on mixed topologies are isolated. Section~4 deals with the relationship between the standard locally convex topologies and the order topology on $\saM$. Section~5 deals with the order topologies of the projection lattice and the unit ball of a von Neumann algebra.

\section{Preliminary results}
\subsection{The order topology and sequential order topology}\label{s2.1}
Let $(P,\le)$ be a partially ordered set. A net  $(x_\gamma)_{\gamma\in \Gamma}$ is said to \emph{order converge} to $x$ in $(P,\le)$
 (in symbols $x_\gamma\xrightarrow{o}x$) if there exist nets $(y_\gamma)_{\gamma\in \Gamma}$, $(z_\gamma)_{\gamma\in \Gamma}$
 in $P$ such that  $y_\gamma \le x_\gamma\le z_\gamma$ for all $\gamma\in \Gamma$,
 $y_\gamma\uparrow  x$ and $z_\gamma\downarrow x$; i.e.  $(y_\gamma)$ is increasing, $(z_\gamma)$ is decreasing and\footnote{For a subset  $X=\set{x_\gamma}{\gamma\in\Gamma}$ of $P$ we shall denote
 by $\bigvee_{\gamma\in\Gamma}x_\gamma$ and $\bigwedge_{\gamma\in\Gamma}x_\gamma$ the least upper bound and the greatest lower bound of the set $X$, respectively.} $\bigvee_{\gamma\in\Gamma} y_\gamma=x=\bigwedge_{\gamma\in\Gamma} z_\gamma$.

It is easy to see that the order limit of an order convergent
net is uniquely determined. A subset $X$ of $P$ is called \emph{order closed} (resp. \emph{sequentially order closed})  if no net (resp. sequence) in $X$ order converges to a point outside of $X$. The collection of all 
order closed sets (resp.  sequentially order closed sets) comprises the closed sets for some topology, the \emph{order
topology} $\tau_o(P)$ (resp. the \emph{sequential order
topology} $\tau_{os}(P)$) of $P$.  The order topology of $P$ is the finest topology on $P$
that preserves order convergence of nets; i.e. if $\tau$ is a topology on $P$ such that $x_\gamma\xrightarrow{o} x$ in $P$  implies $x_\gamma\xrightarrow{\tau} x$, then $\tau\subseteq \tau_o(P)$. The sequential order topology of $P$ is the finest topology on $P$ that preserves order convergence of sequences. 
 Clearly, $\tau_o(P)\subseteq \tau_{os}(P)$ and we recall that both topologies satisfy $T_1$ but in general are not Hausdorff \cite{Floyd,FloydKlee}.

Although convergence w.r.t. $\tau_o(P)$  does not necessarily imply order convergence, for a sequence converging w.r.t.  $\tau_{os}(P)$ we have the following  useful observation (well-known in a less general setting).

\begin{prop}\label{1.1}\cite[Proposition 2]{BCW} Let $( P,\leq)$ be a partially ordered set, $x\in P$ and $(x_n)_{n\in\N}$
a sequence in $P$.  Then $(x_n)_{n\in\N}$ converges to $x$ w.r.t.  $\tau_{os}(P)$  if and only if any subsequence of $(x_n)_{n\in\N}$ has a subsequence order converging  to $x$.
\end{prop}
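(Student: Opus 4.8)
The statement is a biconditional, and the two implications are of very different character, so I would prove them separately. The reverse implication (the subsequence condition forces $\sorder{P}$-convergence) is soft and uses only the \emph{definition} of sequential order-closedness; the forward implication (extracting an order-convergent subsequence out of $\sorder{P}$-convergence) carries all the weight, and I expect the main obstacle to live entirely there.

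\textbf{The reverse implication.} Assume every subsequence of $(x_n)$ has a subsequence order converging to $x$, and suppose, towards a contradiction, that $x_n\not\sorder{P}x$. Then there is a $\tau_{os}(P)$-closed set $C$ with $x\notin C$ that contains $x_n$ for infinitely many $n$; let $(x_{n_k})_k$ be the corresponding subsequence, which lies entirely in $C$. By hypothesis $(x_{n_k})_k$ has a subsequence $(x_{n_{k_j}})_j$ with $x_{n_{k_j}}\xrightarrow{o}x$. Since $\tau_{os}(P)$-closed sets are exactly the sequentially order closed ones, and $(x_{n_{k_j}})_j$ is a sequence in $C$ order converging to $x$, we get $x\in C$, a contradiction. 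Hence only finitely many $x_n$ lie outside any given $\tau_{os}(P)$-neighbourhood of $x$, i.e. $x_n\sorder{P}x$. This direction needs nothing beyond the defining property of the topology.

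\textbf{The forward implication.} Assume $x_n\sorder{P}x$ and suppose the subsequence condition fails: some subsequence $(y_k):=(x_{n_k})_k$ admits no further subsequence order converging to $x$. Because topological convergence passes to subsequences we still have $y_k\sorder{P}x$, so it suffices to contradict this. After discarding finitely many terms I may assume $y_k\neq x$ for all $k$, and, using the uniqueness of order limits together with the $T_1$ property, that the $y_k$ are distinct. The plan is to manufacture a sequentially order closed set $C$ with $x\notin C$ that still meets $(y_k)$ in an infinite set; such a $C$ contradicts $y_k\sorder{P}x$. A first, cheap observation is that the \emph{one-step} order-sequential closure of $\{y_k:k\in\N\}$ already omits $x$: if a sequence with values in $\{y_k\}$ order converged to $x$, then either one value would recur infinitely often (forcing that value to equal $x$ by uniqueness of order limits, impossible), or infinitely many distinct values would occur, yielding a genuine subsequence of $(y_k)$ order converging to $x$, against our assumption.

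\textbf{Main obstacle.} The difficulty is that the order-sequential closure need not be reached in one step: iterating the operation ``adjoin all order limits of sequences'' may re-introduce $x$ at a later stage, through an \emph{order limit of order limits}. This is precisely the non-topological, non-diagonal nature of order convergence, and it is the crux of the whole proposition. I would try to defeat it by a recursive thinning of $(y_k)$: choosing an ever-sparser subsequence whose \emph{entire} iterated order-sequential closure avoids $x$, so that this closure can serve as the separating set $C$. The delicate point is to keep $x$ out at every stage; here I would exploit the order-theoretic data directly, namely the monotone bounding sequences and the suprema and infima whose existence is guaranteed by the various order-convergence hypotheses, together with the uniqueness of order limits, to ensure that any order limit built from the thinned subsequence is forced to differ from $x$. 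Controlling these iterated limits, rather than the elementary separation argument, is where essentially all the work lies.
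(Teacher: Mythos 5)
Your backward implication is correct and complete: it uses only the definition of $\tau_{os}(P)$-closed sets and the heredity of order convergence under subsequences, and it is the standard argument. (Note that the paper itself offers no proof of Proposition~\ref{1.1} --- it quotes it from \cite[Proposition 2]{BCW} --- so your proposal must stand as a free-standing proof.) The forward implication, however, is where all the content lies, and there you have a genuine gap: after the correct reduction (produce a sequentially order closed set $C$ with $x\notin C$ meeting $(y_k)$ in an infinite set) and the correct one-step observation, you concede that the iterated sequential order closure may reintroduce $x$, and you propose a ``recursive thinning'' that is never carried out. Nothing in the proposal explains how the thinning is chosen, why $x$ stays out at every (possibly transfinite) stage of the closure of the thinned set, or why the process terminates. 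As written, the hard half of the equivalence is simply not proved. (A smaller unjustified step: you ``assume the $y_k$ are distinct'', but if only finitely many distinct values occur you cannot pass to an injective subsequence; this case needs its own argument, namely that some value $v\neq x$ recurs infinitely often and the singleton $\{v\}$ is already a separating sequentially order closed set, since by uniqueness of order limits a constant sequence order converges only to its value.)

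Moreover, the obstacle you identify as the crux is an artifact of your strategy, not of the problem: one never has to control iterated order limits, because the separating set need not be a closure of the range at all. The key is that order convergence of injective sequences depends only on the range: if $(t_j)\xrightarrow{o}\ell$ with witnesses $u_j\uparrow\ell$, $v_j\downarrow\ell$, $u_j\le t_j\le v_j$, and $w_i=t_{\sigma(i)}$ for an injective $\sigma:\N\to\N$, then putting $m_i:=\min\{\sigma(i'):i'\ge i\}$ one gets $u_{m_i}\le u_{\sigma(i)}\le w_i\le v_{\sigma(i)}\le v_{m_i}$ with $u_{m_i}\uparrow\ell$ and $v_{m_i}\downarrow\ell$, so $(w_i)\xrightarrow{o}\ell$ as well. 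With this in hand, set $B=\{y_k:k\in\N\}$ (terms distinct, $\neq x$) and split into two cases. If no infinite subset of $B$ order converges under any injective enumeration, then $B$ itself is sequentially order closed: a convergent sequence in $B$ with infinitely many distinct values would have an injective convergent subsequence, contradicting the case hypothesis, while one with finitely many values converges, by uniqueness, to a value in $B$; so $C=B$ separates. If instead some infinite $T\subseteq B$ order converges, say to $\ell$, then by the range-only fact the increasing enumeration of $T$ is a genuine subsequence of $(y_k)$ order converging to $\ell$, whence $\ell\neq x$ by your standing assumption; and $C=T\cup\{\ell\}$ is sequentially order closed by the same two-case analysis (every injective convergent sequence with range in $T$ must converge to $\ell$, by the range-only fact and uniqueness). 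Either way one obtains, in \emph{one} step, a sequentially order closed set containing infinitely many $y_k$ and omitting $x$, contradicting $y_k\to x$ in $\tau_{os}(P)$. So the missing ingredient is not a transfinite thinning scheme but the permutation invariance of order convergence, which your proposal does not isolate.
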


The sequential order topology is in general strictly finer than  the order topology, however it turns out that the two topologies coincide when  $P$ is \emph{monotone order separable}.    We call $(P,\le)$  monotone order separable  if for
   every increasing (or  decreasing) net $(x_\gamma)_{\gamma\in\Gamma}$ in $P$ that has a supremum (resp.  infimum) in $P$ there exists an increasing sequence $(\gamma_n)_{n\in\N}$ in $\Gamma$ such that $\bigvee_{n\in\N} x_{\gamma_n}=\bigvee_{\gamma\in\Gamma} x_\gamma$ (resp.  $\bigwedge_{n\in\N} x_{\gamma_n}=\bigwedge_{\gamma\in\Gamma} x_\gamma$).

 \begin{prop}\label{1.2}\cite[Proposition 3]{BCW} Let $(P,\leq)$ be a partially ordered set.  Then $\tau_{os}(P)=\tau_o(P)$ if and only if
$(P,\le)$ is  monotone order separable.
 \end{prop}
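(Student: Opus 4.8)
The plan is to reduce the topological equality to a statement about closed sets. Every order closed set is sequentially order closed (a sequence is a net), so the families of closed sets satisfy $\tau_o(P)\subseteq\tau_{os}(P)$ automatically; thus $\tau_o(P)=\tau_{os}(P)$ holds exactly when every sequentially order closed set is already order closed. I would prove the two implications separately against this reformulation.

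For the implication ``monotone order separable $\Rightarrow\tau_o(P)=\tau_{os}(P)$'' I would take a sequentially order closed set $X$ and a net $(x_\gamma)_{\gamma\in\Gamma}$ in $X$ with $x_\gamma\xrightarrow{o}x$, and show $x\in X$. By definition of order convergence there are nets $(y_\gamma)$, $(z_\gamma)$ with $y_\gamma\le x_\gamma\le z_\gamma$, $y_\gamma\uparrow x$ and $z_\gamma\downarrow x$. Applying monotone order separability to the increasing net $(y_\gamma)$ (supremum $x$) and to the decreasing net $(z_\gamma)$ (infimum $x$) yields increasing index sequences $(\gamma_n)$ and $(\delta_n)$ with $\bigvee_n y_{\gamma_n}=x$ and $\bigwedge_n z_{\delta_n}=x$. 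Using that $\Gamma$ is directed I would then choose an increasing sequence $(\alpha_n)$ in $\Gamma$ with $\alpha_n\ge\gamma_n,\delta_n$; monotonicity of $(y_\gamma),(z_\gamma)$ together with $y_\gamma\le x\le z_\gamma$ gives $\bigvee_n y_{\alpha_n}=x=\bigwedge_n z_{\alpha_n}$, so the sequence $(x_{\alpha_n})\subseteq X$ order converges to $x$. Since $X$ is sequentially order closed, $x\in X$, and hence $X$ is order closed.

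For the converse I would argue by contraposition: assuming $P$ is not monotone order separable, I will produce a sequentially order closed set that is not order closed. By the order-duality of all the notions involved, I may assume the failure is witnessed by an increasing net $(x_\gamma)_{\gamma\in\Gamma}$ with $\bigvee_\gamma x_\gamma=x$ for which no increasing sequence $(\gamma_n)$ satisfies $\bigvee_n x_{\gamma_n}=x$; note this forces $x_\gamma\ne x$ for every $\gamma$. Put $A=\set{x_\gamma}{\gamma\in\Gamma}$. The net order converges to $x$, so $x$ lies in the $\tau_o(P)$-closure of $A$. It then suffices to exhibit a sequentially order closed set $Y\supseteq A$ with $x\notin Y$, for then $x\notin\overline A^{\,\tau_{os}(P)}$ while $x\in\overline A^{\,\tau_o(P)}$, the two closures differ, and hence $\tau_o(P)\ne\tau_{os}(P)$.

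The device that makes this work --- and what I expect to be the main obstacle, since suprema of subfamilies of $A$ need not exist in a general poset --- is to define membership through upper bounds rather than through suprema. I would set
\[
Y=\set{p\in P}{\text{there is a countable }D\subseteq A\text{ with }p\le b\text{ for every upper bound }b\text{ of }D}.
\]
Then $A\subseteq Y$ (take $D=\{x_\gamma\}$), and $x\notin Y$: if $x\le b$ for every upper bound $b$ of some countable $D\subseteq A$, then, as $x$ is itself an upper bound of $D$, it is the least upper bound of $D$, and an increasing sequence cofinal in $D$ (available by directedness of $\Gamma$) would be a sequence from the net with supremum $x$, contradicting the choice of the net. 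Finally $Y$ is sequentially order closed: if $(p_k)\subseteq Y$ with $p_k\xrightarrow{o}q$, pick $u_k\uparrow q$ with $u_k\le p_k$ and a countable witness $D_k$ for each $p_k$; then $D=\bigcup_k D_k$ is a countable subset of $A$, and any upper bound $b$ of $D$ bounds each $D_k$, whence $u_k\le p_k\le b$ for all $k$ and therefore $q=\bigvee_k u_k\le b$. Thus $D$ witnesses $q\in Y$, which completes the construction and hence the proof.
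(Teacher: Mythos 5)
Your proof is correct. There is nothing in the paper to compare it against: the proposition is quoted from \cite[Proposition 3]{BCW} and no proof is reproduced there, so your argument stands as a self-contained substitute. The forward implication is the expected diagonal argument: extract index sequences for the two bounding nets, merge them into a single increasing sequence $(\alpha_n)$ by directedness of $\Gamma$, and use monotonicity to transfer the supremum/infimum to the merged sequence. The real content is your converse, and the device of defining
\[
Y=\set{p\in P}{\text{there is a countable } D\subseteq A \text{ with } p\le b \text{ for every upper bound } b \text{ of } D}
\]
is exactly what makes the construction work in an arbitrary poset: since countable subfamilies of $A=\set{x_\gamma}{\gamma\in\Gamma}$ need not possess suprema, one cannot define $Y$ as ``all $p$ below the supremum of some countable subfamily,'' and quantifying over upper bounds is the correct surrogate. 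Your three verifications are sound: $A\subseteq Y$ is immediate; $x\notin Y$ follows because a countable witness $D$ would force $x=\bigvee D$, and an increasing sequence of indices dominating the (countably many) indices of $D$ --- available by directedness of $\Gamma$ --- would then be an increasing index sequence along which the net has supremum $x$, contradicting the choice of the net; and sequential order closedness of $Y$ uses only the lower halves $u_k\uparrow q$ of an order convergent sequence together with countability of $\bigcup_k D_k$. Finally, the reduction to the increasing case is legitimate since order convergence, both closedness notions, and monotone order separability are all self-dual notions.
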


Every order convergent sequence is order bounded, and every order convergent net is eventually order bounded. Therefore in the definition of order closed sets it is enough to consider order bounded nets.  We recall that $(P,\le)$ is Dedekind complete if every subset having an upper bound (or a lower bound) has a supremum (resp. an infimum). $(P,\le)$ is conditional monotone complete if every monotonic increasing net (or monotonic decreasing net) having an upper bound (resp. a lower bound) has a supremum (resp. an infimum).  Dedekind $\sigma$-completeness (resp. conditional monotone $\sigma$-completeness) is defined analogously requiring the condition to hold for countable subsets  (resp. sequences).
It is easily seen that when $(P,\le)$ is Dedekind complete, an order bounded net $(x_\gamma)_{\gamma\in\Gamma}$ order converges to $x$ in $(P,\le)$ if and only if $\limsup_\gamma x_\gamma=\liminf_\gamma x_\gamma=x$. When $(P,\le)$ is only assumed to be Dedekind $\sigma$-complete a similar assertion holds for sequences.

If $P_0$ is a subset of $P$ it can very well happen that $\tau_o(P_0)$ and $\tau(P)|P_0$ are incomparable.   However,  we have the following easily seen observations which we put as a proposition for better reference.

\begin{prop}\label{1.6} Let $(P,\leq)$ be a partially ordered set and let $P_0$ be a subset of $P$.
\begin{enumerate}[{\rm(i)}]
\item If $(P,\le)$ is conditional monotone complete and $P_0$ is $\tau_o(P)$-closed then $\tau_o(P)|P_0\subseteq \tau_o(P_0)$.  
\item If $(P,\le)$ is Dedekind complete and $P_0$ is a $\tau_o(P)$-closed sublattice
of $P$ then $\tau_o(P)|P_0= \tau_o(P_0)$.
\end{enumerate}
\end{prop}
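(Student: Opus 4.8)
The two inclusions are most naturally phrased in terms of closed sets, and in both cases the engine is that a $\tau_o(P)$-closed set is \emph{order closed} in $P$: any net in it that order converges in $P$ keeps its limit inside the set. The recurring difficulty is that suprema and infima computed inside $P_0$ need not agree with those computed in the ambient $P$; the role of the hypotheses is precisely to bridge this gap.

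For (i) it suffices to show that for each $\tau_o(P)$-closed $C\subseteq P$ the trace $C\cap P_0$ is $\tau_o(P_0)$-closed, since the closed sets of $\tau_o(P)|P_0$ are exactly such traces. So I would take a net $(x_\gamma)$ in $C\cap P_0$ with $x_\gamma\xrightarrow{o}x$ in $P_0$, $x\in P_0$, together with witnessing nets $y_\gamma\le x_\gamma\le z_\gamma$ in $P_0$ satisfying $y_\gamma\uparrow x$ and $z_\gamma\downarrow x$ in $P_0$. The plan is to upgrade this to order convergence in $P$. Since $(y_\gamma)$ is increasing in $P$ and bounded above by $x$, conditional monotone completeness of $P$ supplies $y:=\bigvee_\gamma y_\gamma$ in $P$; as $(y_\gamma)$ lies in $P_0$ and order converges to $y$ in $P$, order-closedness of $P_0$ forces $y\in P_0$. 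Comparing the least upper bounds of $(y_\gamma)$ taken in $P_0$ and in $P$ then yields $y=x$, so $y_\gamma\uparrow x$ in $P$; symmetrically $z_\gamma\downarrow x$ in $P$. Hence $x_\gamma\xrightarrow{o}x$ in $P$, and since $(x_\gamma)$ lies in the order closed set $C$ we get $x\in C$, as required.

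For (ii), part (i) already gives $\tau_o(P)|P_0\subseteq\tau_o(P_0)$ (Dedekind completeness implies conditional monotone completeness). For the reverse inclusion I must show every $\tau_o(P_0)$-closed $D\subseteq P_0$ is closed in $\tau_o(P)|P_0$; since $P_0$ is itself $\tau_o(P)$-closed, it is enough to check $D$ is $\tau_o(P)$-closed. So let $(x_\gamma)$ be a net in $D$ with $x_\gamma\xrightarrow{o}x$ in $P$. First, $P_0$ being order closed in $P$ forces $x\in P_0$. The goal is to manufacture witnessing nets \emph{inside} $P_0$. After passing to an order-bounded tail, Dedekind completeness lets me form $a_\gamma:=\bigwedge_{\delta\ge\gamma}x_\delta$ and $b_\gamma:=\bigvee_{\delta\ge\gamma}x_\delta$ in $P$, and order convergence in $P$ means $a_\gamma\uparrow x$ and $b_\gamma\downarrow x$ there. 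Here the sublattice hypothesis enters: each $a_\gamma$ is the infimum in $P$ of the downward directed net of \emph{finite} meets $\bigwedge_{\delta\in F}x_\delta$ ($F$ finite, $\delta\ge\gamma$), which lie in $P_0$ and order converge in $P$ to $a_\gamma$; order-closedness of $P_0$ therefore places $a_\gamma\in P_0$, and likewise $b_\gamma\in P_0$.

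It then remains to see that $a_\gamma\uparrow x$ and $b_\gamma\downarrow x$ already hold \emph{in} $P_0$. Since $a_\gamma,x\in P_0$ with $a_\gamma\le x$ in $P$, the element $x$ is an upper bound of $(a_\gamma)$ in $P_0$; and any $P_0$-upper bound is an ambient upper bound, hence dominates $\bigvee_\gamma a_\gamma=x$, so $x=\bigvee^{P_0}_\gamma a_\gamma$, and dually $x=\bigwedge^{P_0}_\gamma b_\gamma$. With $a_\gamma\le x_\gamma\le b_\gamma$ this gives $x_\gamma\xrightarrow{o}x$ in $P_0$, whence $x\in D$ by $\tau_o(P_0)$-closedness. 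The main obstacle throughout is the possible mismatch between ambient and relative extrema; the two hypotheses defeat it in tandem, the sublattice property keeping the approximating finite lattice operations inside $P_0$, and $\tau_o(P)$-closedness both pulling their infinite order limits back into $P_0$ and pinning the relative suprema and infima to the ambient ones.
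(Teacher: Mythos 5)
Your proof is correct. The paper states Proposition~\ref{1.6} without proof (it is introduced as an ``easily seen observation''), so there is no argument of the authors to compare against; your write-up supplies the intended details and correctly isolates the one genuine subtlety, namely the possible mismatch between suprema/infima computed in $P_0$ and in $P$: in (i) conditional monotone completeness plus order-closedness of $P_0$ pin the ambient supremum/infimum of the witnessing nets to the relative ones, and in (ii) the sublattice hypothesis (read, as is standard and as the paper's applications require, as closure under the \emph{ambient} binary meets and joins) keeps the finite meets and joins approximating $\liminf_\gamma x_\gamma$ and $\limsup_\gamma x_\gamma$ inside $P_0$, after which order-closedness pulls $a_\gamma$ and $b_\gamma$ themselves into $P_0$.
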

An analogous proposition holds for the sequential order topology: Proposition \ref{1.6} remains true if one replaces the order topology by the sequential order topology, conditional monotone completeness  by monotone $\sigma$-completeness and Dedekind completeness by  Dedekind $\sigma$-completeness.

We shall now consider the case when the underlying poset carries also a linear structure.  Let $X$ be an  ordered vector space with positive cone $X^+=\set{x\in X}{x\ge 0}$.  For basic results and terminology
on ordered vector spaces the reader may wish to consult \cite{AliprantisBurkinshaw,LuxemburgZaanen,Schaeffer}.
It is clear that the order topology $\tau_o(X)$ and the sequential order topology $\tau_{os}(X)$ are translation invariant and homogeneous, i.e. if $A$ is a subset of $X$ closed w.r.t. $\tau_o(X)$ (or  $\tau_{os}(X)$) then
$A+x$ and $\lambda A$ are closed w.r.t. $\tau_o(X)$ (resp.  $\tau_{os}(X)$) for every $x\in X$ and $\lambda\in\R$.  In general these topologies however fail to be linear topologies as  the following example shows: Let $\mathfrak{A}$ be the complete Boolean algebra of all regular open subsets of $[0,1]$ and $B(\mathfrak{A})$ be the closed linear span of the set of characteristic functions $\chi_A$, $A\in\mathfrak{A}$, in the space $\bigl(B[0,1],\|\cdot\|_\infty\bigr)$ of all bounded real functions on $[0,1]$ w.r.t. the supremum norm $\|\cdot\|_\infty$. Then $B(\mathfrak{A})$ is a monotone order separable, Dedekind complete Riesz space.  Thus $\tau_{os}(B(\mathfrak{A}))=\tau_{o}(B(\mathfrak{A}))$.  Since $\tau_{o}(B(\mathfrak{A}))$ satisfies $T_1$ and $\tau_o(B(\mathfrak A))$ is not Hausdorff, it follows that $\tau_{o}(B(\mathfrak{A}))$ is not a group topology.\footnote{In \cite{Floyd} it is shown that $\tau_o(\mathfrak A)$ is not Hausdorff.} 

\begin{prop}\label{1.3}
Let $X$ be an ordered vector space and $(a_n)_{n\in\N}$, $(b_n)_{n\in\N}$ sequences in $X$  such that  $a_n \sorder{X} a$, $b_n \sorder{X} b$ and $(\lambda_n)_{n\in\N}$ a sequence in $\R$ such that $\lambda_n\longrightarrow \lambda$. Then:
\begin{enumerate}[{\rm(i)}]
\item $a_n+b_n \sorder{X}  a+b$;
\item If $\bigl(\frac{1}{n}a\bigr)_{n\in \N}$ order converges to $0$ (in particular if $X$ is an Archimedian Riesz space), then $\lambda_n a_n\sorder{X} \lambda a$.
\end{enumerate}
\end{prop}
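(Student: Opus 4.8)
The plan is to reduce both parts, through \propref{1.1}, to the corresponding statements about \emph{genuine} order convergence of sequences, and to isolate beforehand two elementary facts about how order limits interact with the linear structure of $X$.

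The first fact is additivity of order limits: if $c_j\xrightarrow{o}c$ and $d_j\xrightarrow{o}d$, then $c_j+d_j\xrightarrow{o}c+d$. Picking witnessing sequences $y_j\le c_j\le z_j$ with $y_j\uparrow c$, $z_j\downarrow c$, and $u_j\le d_j\le v_j$ with $u_j\uparrow d$, $v_j\downarrow d$, one has $y_j+u_j\le c_j+d_j\le z_j+v_j$, and the only point requiring the vector-space structure is that $\bigvee_j(y_j+u_j)=c+d$; this follows from translation invariance of the order, since any upper bound $w$ of $\{y_j+u_j\}$ satisfies $y_m+u_n\le w$ for all $m,n$, whence $c\le w-u_n$, then $u_n\le w-c$, then $d\le w-c$. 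For part~(i) I would then take an arbitrary subsequence of $(a_n+b_n)$ and, applying \propref{1.1} first to $(a_n)$ and then to $(b_n)$, extract a common further subsequence along which both $a_n\xrightarrow{o}a$ and $b_n\xrightarrow{o}b$; additivity gives order convergence of the sum to $a+b$ along it, and \propref{1.1} closes the argument.

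For part~(ii) I would use the splitting $\lambda_n a_n-\lambda a=\lambda_n(a_n-a)+(\lambda_n-\lambda)a$ and two scalar-multiplication lemmas. The first: if $c_j\xrightarrow{o}0$ and $|\mu_j|\le K$, then $\mu_j c_j\xrightarrow{o}0$; from a sandwich $u_j\le c_j\le v_j$ with $u_j\uparrow 0$, $v_j\downarrow 0$ (so $u_j\le 0\le v_j$) a short case distinction on the sign of $\mu_j$ yields the uniform estimate $-K(v_j-u_j)\le\mu_j c_j\le K(v_j-u_j)$, and $K(v_j-u_j)\downarrow 0$. The second lemma is where the hypothesis $\tfrac1n a\xrightarrow{o}0$ is used: if $\beta_j\to 0$ in $\R$, then $\beta_j a\xrightarrow{o}0$. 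For this I would choose integers $1\le n_j\to\infty$ with $\gamma_j:=n_j\beta_j\to 0$ (possible because $\beta_j\to 0$), write $\beta_j a=\gamma_j\cdot\tfrac{1}{n_j}a$, observe that $\bigl(\tfrac{1}{n_j}a\bigr)_j$ still order converges to $0$ as a (possibly repeated) subsequence of $\bigl(\tfrac1n a\bigr)_n$, and apply the first lemma with the bounded scalars $\gamma_j$.

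Assembling part~(ii): given a subsequence of $(\lambda_n a_n)$, \propref{1.1} lets me pass to a further subsequence along which $a_n\xrightarrow{o}a$ (while still $\lambda_n\to\lambda$); there $\lambda_n(a_n-a)\xrightarrow{o}0$ by the first lemma, $(\lambda_n-\lambda)a\xrightarrow{o}0$ by the second, and additivity gives $\lambda_n a_n\xrightarrow{o}\lambda a$, so \propref{1.1} delivers $\lambda_n a_n\sorder{X}\lambda a$. I expect the scalar-multiplication step to be the main obstacle: since $X$ is only an ordered vector space, lattice operations and absolute values are unavailable, so both bounds must be manufactured from the explicit monotone sandwich sequences, and the entire Archimedean-type input needed to handle $(\lambda_n-\lambda)a$ has to be squeezed out of the single hypothesis $\tfrac1n a\xrightarrow{o}0$ via the reindexing $\beta_j a=\gamma_j\cdot\tfrac{1}{n_j}a$.
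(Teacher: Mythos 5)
Your proof is correct, and part (i) is essentially the paper's argument: the paper also reduces via \propref{1.1} to genuinely order-convergent subsequences and adds the two monotone sandwiches (it takes the identity $\bigvee_j(y_j+u_j)=c+d$ for granted, which you verify). Part (ii), however, is organized genuinely differently. The paper uses \propref{1.1} more aggressively on the scalars: it passes to a subsequence along which $|\lambda_n-\lambda|\le\frac1n$ and $\mu_n:=\lambda_n-\lambda$ has constant sign; then both scalar estimates become one-line sandwiches tied to the index $n$, namely $x_n-a\le\mu_n(x_n-a)\le\mu_n(a_n-a)\le\mu_n(y_n-a)\le y_n-a$ (using $0\le\mu_n\le1$) and $s_n-t_n\le\mu_n a\le t_n-s_n$ (using $0\le n\mu_n\le1$ against the sandwich $s_n\le\frac1n a\le t_n$), with the decomposition $\lambda_na_n=\mu_n(a_n-a)+\mu_n a+\lambda a_n$. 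You keep the scalars general, decompose instead as $\lambda_na_n-\lambda a=\lambda_n(a_n-a)+(\lambda_n-\lambda)a$, and factor the work through two reusable lemmas: bounded scalars times an order-null sequence (your two-sided bound $\pm K(v_j-u_j)$ absorbs the sign discussion index by index, so you never need the paper's global case split $\mu_n\ge0$ versus $\mu_n<0$), and null scalars times the fixed vector $a$, obtained by the reindexing $\beta_j a=\gamma_j\cdot\frac1{n_j}a$ with $\gamma_j=n_j\beta_j\to0$. The two routes extract the Archimedean-type input from $\frac1n a\xrightarrow{o}0$ in dual ways: the paper forces the rate $|\mu_n|\le\frac1n$ by passing to a subsequence, while you choose $n_j$ a posteriori to match an arbitrary rate; the paper's normalization is shorter and keeps every inequality explicit, whereas your version produces statements of independent use and avoids the sign-uniformization step. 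The one point you should still write out is the flagged ``repeated subsequence'' claim: your indices $n_j\to\infty$ need not be monotone, so $\bigl(\frac1{n_j}a\bigr)_j$ is not literally a subsequence of $\bigl(\frac1n a\bigr)_n$; but setting $m_j:=\min_{k\ge j}n_k$ gives a non-decreasing sequence with $m_j\to\infty$ and $m_j\le n_j$, whence $s_{m_j}\le\frac1{n_j}a\le t_{m_j}$ with $s_{m_j}\uparrow0$ and $t_{m_j}\downarrow0$, which closes that step.
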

\begin{proof}
We will apply Proposition \ref{1.1}. Passing to suitable subsequences we may assume that $(a_n)_{n\in\N}$ and $(b_n)_{n\in\N}$ order converge to $a$ and $b$, respectively, and moreover that $|\lambda-\lambda_n|\leq\frac{1}{n}$ and either $\lambda_n-\lambda\ge 0$ for each $n$ or $\lambda_n-\lambda<0$ for each $n$.  Let $(x_n)_{n\in\N}$, $(y_n)_{n\in\N}$, $(u_n)_{n\in\N}$ and $(v_n)_{n\in\N}$ be sequences in $X$ such that
\[
x_{n}\le a_{n}\le y_{n}\qquad u_{n}\le b_{n}\le v_{n}\qquad\text{for all }n\in\N\,,
\]
and $x_{n}\uparrow a$, $y_{n}\downarrow a$, $u_{n}\uparrow b$ and $v_{n}\downarrow b$. Then $x_{n}+u_{n}\le a_{n}+b_{n}\le y_{n}+v_{n}$, $x_{n}+u_{n}\uparrow a+b$ and  $y_{n}+v_{n}\downarrow a+b$; thus $(a_n+b_n)$ order converges to $a+b$.

To prove (ii) let us first suppose that $\mu_n:=\lambda_{n}-\lambda\ge 0$ for every $n$.
Observing that 
 \[x_{n}-a\leq \mu_n(x_{n}-a)\leq\mu_n(a_{n}-a)\leq\mu_n(y_{n}-a)\leq y_{n}-a\,,\] 
we deduce that $\bigl(\mu_n(a_{n}-a)\bigr)_{n\in\N}$ order converges to $0$.  The  additional assumption that $\bigl(\frac{1}{n}a\bigr)_{n\in \N}$ order converges to $0$ implies that there exist sequences $(s_n)_{n\in\N}$ and $(t_n)_{n\in\N}$ satisfying $s_n\le \frac{1}{n}a\le t_n$ for every $n\in\N$, $s_n\uparrow 0$ and $t_n\downarrow 0$.  Observing that
\[s_n-t_n\le n\mu_n(s_n-t_n)\le\mu_n a\le n\mu_n(t_n-s_n)\le t_n-s_n\,,\]
we deduce that $(\mu_n a)_{n\in\N}$ order converges to $0$.  Thus $\lambda_n a_n=\mu_n(a_n-a)+\mu_n a+\lambda a_n$ order converges to $\lambda a$.

If $\lambda_{n}-\lambda<0$ for every $n$, then the above implies that $(-\lambda_{n}a_n)_{n\in\N}$ order converges to $-\lambda a$ and thus $(\lambda_na_{n})_{n\in\N}$ order converges to $\lambda a$.
\end{proof}

Let us recall that a linear functional $f$ on $X$ is  said to be \emph{positive}  if $x\ge 0$ implies $f(x)\ge 0$.  If $f(x)> 0$  for every nonzero positive element  $x$ of $X$ then $f$ is said to be a \emph{faithful} positive linear functional. A linear functional $f$ is said to be \emph{normal} (or order continuous) if $f(x_\gamma)\longrightarrow f(x)$ whenever $x_\gamma\xrightarrow{o}x$ in $X$.  Clearly, a positive linear functional $f$ on $X$ is normal if and only if $x_\gamma\downarrow 0$ implies $f(x_\gamma)\downarrow 0$.

In the proof of the following proposition we use the fact that an ordered vector space $X$ is monotone order separable if and only if for every net $(x_\gamma)_{\gamma\in\Gamma}$ in $X$ satisfying $x_\gamma\downarrow 0$ there exists an increasing sequence $(\gamma_n)_{n\in\N}$ in $\Gamma$ such that $\bigwedge_{n\in\N} x_{\gamma_n}=0$.

\begin{prop}\label{1.4}
Let $X$ be a conditional monotone $\sigma$-complete  ordered vector space admitting a faithful  normal  positive linear functional $f$. Then $X$ is monotone order separable  and therefore $\tau_{os}(X)=\tau_o(X)$.
\end{prop}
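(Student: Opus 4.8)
The plan is to establish the monotone order separability criterion recalled immediately before the statement, namely that for every net $(x_\gamma)_{\gamma\in\Gamma}$ in $X$ with $x_\gamma\downarrow 0$ one can extract an increasing sequence $(\gamma_n)_{n\in\N}$ in $\Gamma$ with $\bigwedge_{n\in\N}x_{\gamma_n}=0$; once this is done, the equality $\tau_{os}(X)=\tau_o(X)$ follows at once from \propref{1.2}. The role of the functional $f$ will be twofold: to convert the order-theoretic condition $x_\gamma\downarrow 0$ into the numerical condition $\inf_\gamma f(x_\gamma)=0$, and conversely to recognise when an infimum computed in $X$ is actually $0$.

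First I would use normality of $f$ in the form recalled above: since $x_\gamma\downarrow 0$ implies $f(x_\gamma)\downarrow 0$, we have $\inf_{\gamma\in\Gamma}f(x_\gamma)=0$. This lets me choose, for each $n$, an index $\delta_n\in\Gamma$ with $f(x_{\delta_n})<\tfrac1n$. Using that $\Gamma$ is directed, I would then build an increasing sequence $(\gamma_n)_{n\in\N}$ recursively by letting $\gamma_n$ be an upper bound of $\{\gamma_{n-1},\delta_n\}$. Because the net is decreasing we then have $x_{\gamma_n}\le x_{\delta_n}$, whence $f(x_{\gamma_n})\le f(x_{\delta_n})<\tfrac1n$, and $(x_{\gamma_n})_{n\in\N}$ is a decreasing sequence.

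Next I would produce the candidate infimum. The sequence $(x_{\gamma_n})_{n\in\N}$ is decreasing and bounded below by $0$, so conditional monotone $\sigma$-completeness guarantees that $y:=\bigwedge_{n\in\N}x_{\gamma_n}$ exists, with $y\ge 0$. Now $x_{\gamma_n}\downarrow y$, i.e. the sequence order converges to $y$, so normality of $f$ yields $f(y)=\lim_n f(x_{\gamma_n})=0$. Since $y\ge 0$ and $f$ is faithful, this forces $y=0$, which is exactly the required conclusion; applying \propref{1.2} completes the argument.

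I do not anticipate a serious obstacle, as the content lies entirely in the interplay of the three hypotheses. The one point needing care is that the extracted sequence need not be cofinal in the original net, so one cannot simply identify its infimum with the infimum of the full net; the argument instead relies essentially on \emph{faithfulness}, applied to the sequential infimum $y$ alone, to conclude $y=0$. The $\sigma$-completeness hypothesis is used precisely to guarantee that this sequential infimum exists, so that normality may be applied to the order convergent sequence $x_{\gamma_n}\downarrow y$.
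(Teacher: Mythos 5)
Your proof is correct and follows essentially the same route as the paper's: normality of $f$ is used to extract an increasing sequence of indices $(\gamma_n)_{n\in\N}$ with $f(x_{\gamma_n})\to 0$, conditional monotone $\sigma$-completeness provides the infimum $y$ of the decreasing sequence $(x_{\gamma_n})_{n\in\N}$, and normality plus faithfulness force $y=0$, after which \propref{1.2} applies. The only difference is that you make explicit two details the paper leaves implicit, namely the use of directedness of $\Gamma$ to produce an increasing index sequence and the appeal to $\sigma$-completeness for the existence of $\bigwedge_{n\in\N}x_{\gamma_n}$.
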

\begin{proof}  Let $(x_\gamma)_{\gamma\in \Gamma}$ be a net in $X$ satisfying $x_\gamma\downarrow 0$. The normality of $f$ implies that $f(x_\gamma)\longrightarrow 0$.  Thus we can select an increasing sequence $(\gamma_n)_{n\in\N}$ in $\Gamma$ such that
$f(x_{\gamma_n})\longrightarrow 0$. Then $s:=\inf_{n\in\N} x_{\gamma_n}\ge 0$ and by the normality of $f$ we deduce that $f(s)=\lim_n f(x_{\gamma_n})=0$. Faithfulness of $f$ implies $s=0$.
\end{proof}

For Riesz spaces the mere  existence of a  faithful positive linear functional (without assuming normality) is sufficient for
the order topology and the sequential order topology to coincide.

\begin{prop}\label{1.5}
Let $X$ be a Riesz space admitting a faithful positive linear functional $f$.
Then $X$ is monotone order separable   and therefore $\tau_{os}(X)=\tau_o(X)$.
\end{prop}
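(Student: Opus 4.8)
The plan is to verify the criterion recalled just before Proposition~\ref{1.4}: a Riesz space $X$ is monotone order separable precisely when every net $(x_\gamma)_{\gamma\in\Gamma}$ with $x_\gamma\downarrow 0$ admits an increasing sequence $(\gamma_n)_{n\in\N}$ such that $\bigwedge_{n\in\N}x_{\gamma_n}=0$; the coincidence $\tau_{os}(X)=\tau_o(X)$ then follows immediately from Proposition~\ref{1.2}. So I would fix a net $x_\gamma\downarrow 0$. Since $f$ is positive and the net decreasing, $(f(x_\gamma))_\gamma$ is a decreasing net of nonnegative reals; put $\alpha:=\inf_\gamma f(x_\gamma)\ge 0$. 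In Proposition~\ref{1.4} normality of $f$ forced $\alpha=0$, after which faithfulness finished the argument at once. Here $f$ is \emph{not} assumed normal, so $\alpha$ may be strictly positive, and overcoming this is exactly the obstacle the proof must address.

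First I would choose the subsequence greedily along the values of $f$: using that $f(x_\gamma)\downarrow\alpha$ and that $\Gamma$ is directed, pick $(\gamma_n)_{n\in\N}$ increasing with $f(x_{\gamma_n})\longrightarrow\alpha$. I then claim $\bigwedge_n x_{\gamma_n}=0$. As $0$ is clearly a lower bound, it suffices to show that an arbitrary lower bound $y$ of $\{x_{\gamma_n}:n\in\N\}$ satisfies $y\le 0$; I will in fact prove the stronger statement that such a $y$ is a lower bound of the \emph{whole} net, so that $y\le\bigwedge_\gamma x_\gamma=0$.

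The heart of the matter, and the only place where both the lattice structure and faithfulness are used, is the following estimate, carried out for an arbitrary fixed $\gamma\in\Gamma$. For each $n$ choose (by directedness) $\delta\ge\gamma,\gamma_n$, so that $x_\delta\le x_\gamma$ and $x_\delta\le x_{\gamma_n}$. From $y\le x_{\gamma_n}$ and monotonicity of the positive part one gets
\[0\le (y-x_\gamma)^+\le (x_{\gamma_n}-x_\gamma)^+\le x_{\gamma_n}-x_\delta,\]
where the last inequality holds because $x_\delta\le x_\gamma$ makes $x_{\gamma_n}-x_\gamma\le x_{\gamma_n}-x_\delta$ with right-hand side already positive. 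Applying $f$ and using $f(x_\delta)\ge\alpha$ gives $f\bigl((y-x_\gamma)^+\bigr)\le f(x_{\gamma_n})-\alpha$. Since the left side does not depend on $n$, letting $n\to\infty$ yields $f\bigl((y-x_\gamma)^+\bigr)\le 0$, hence $=0$; faithfulness then forces $(y-x_\gamma)^+=0$, i.e. $y\le x_\gamma$. As $\gamma$ was arbitrary, $y\le 0$, which proves the claim and with it monotone order separability.

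I expect the only delicate point to be the bookkeeping in this estimate — keeping the dependence of $\delta$ on both $\gamma$ and $n$ straight while ensuring the bound $f\bigl((y-x_\gamma)^+\bigr)\le f(x_{\gamma_n})-\alpha$ holds for every $n$ simultaneously, so that the limit in $n$ may legitimately be taken. It is worth noting that no (conditional monotone $\sigma$-)completeness hypothesis is needed, in contrast with Proposition~\ref{1.4}: I never form the infimum of the subsequence independently, but verify directly that $0$ is its greatest lower bound.
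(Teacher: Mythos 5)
Your proof is correct and is essentially the paper's own argument: since $(y-x_\gamma)^+ = y\vee x_\gamma - x_\gamma$ and your $x_\delta$ plays exactly the role of the paper's $x_{\gamma''}$ (with $f(x_{\gamma_n}\wedge x_\gamma)\ge f(x_{\gamma''})\ge\alpha$), your key estimate coincides with the paper's chain $0\le x\vee x_\gamma - x_\gamma \le x_{\gamma_n}-x_{\gamma_n}\wedge x_\gamma$ followed by applying $f$ and letting $n\to\infty$. The only differences are notational (positive parts instead of joins and meets), so no further comparison is needed.
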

\begin{proof} Let $(x_\gamma)_{\gamma\in \Gamma}$ be a  net in $X$ satisfying $x_\gamma\downarrow 0$. 
  Set $\alpha:=\inf_{\gamma\in \Gamma}f(x_\gamma)$.
  Note that for all $\gamma, \gamma '\in\Gamma$ there is a $\gamma''\in\Gamma$ with $\gamma''\geq\gamma$ and $\gamma''\geq\gamma'$, hence $f(x_\gamma\wedge x_{\gamma'})\geq f(x_{\gamma''})\geq \alpha$. Choose an increasing sequence $(\gamma_n)_{n\in\N}$ in $\Gamma$ such that $f(x_{\gamma_n})\longrightarrow \alpha$.
We  show that $0=\bigwedge_{n\in\N} x_{\gamma_n}$. To this end let $x$ be a lower bound of  $\set{x_{\gamma_n}}{n\in\N}$ and let $\gamma\in \Gamma$. Then
\[0\leq x\vee x_\gamma-x_\gamma\,\le\, x_{\gamma_n}\vee x_\gamma -x_\gamma\,=\,x_{\gamma_n}-x_{\gamma_n}\wedge x_\gamma\,,\]
and therefore
\[0\leq f(x\vee x_\gamma-x_\gamma)\le f(x_{\gamma_n})-f(x_{\gamma_n}\wedge x_\gamma)\leq f(x_{\gamma_n})-\alpha \longrightarrow 0\]
 as $n \to\infty$, i.e. $f(x\vee x_\gamma-x_\gamma)=0$.  
Faithfulness of $f$ implies $x_\gamma=x\vee x_\gamma$ and so $x\le x_\gamma$. We conclude that $x$ is a lower bound for  $\set{x_\gamma}{\gamma\in \Gamma}$.
Consequently $x\leq \inf_{\gamma\in \Gamma}x_\gamma=0$. It follows that $\bigwedge_{n\in\N}x_{\gamma_n}=0$. This proves that  $X$ is monotone order separable and hence by  \propref{1.2}  we deduce that  $\tau_o(X)=\tau_{os}(X)$.
\end{proof}

Our main interest in this paper will be the ordered vector space $X$ given by the self-adjoint part of a von Neumann algebra. In general this is far from being a Riesz space.  However, it is interesting to note that in this case the thesis of Propositions \ref{1.4} and \ref{1.5} holds under the hypothesis that $X$ admits a faithful positive linear functional.   Indeed, if $X$ admits a faithful positive linear functional then any family of pairwise orthogonal projections is necessarily countable, i.e. the corresponding  von Neumann algebra must be $\sigma$-finite. As such it must admit a faithful normal positive linear functional and therefore Proposition~\ref{1.4} applies.

\subsection{Preliminaries on von Neumann algebras}     We first recall a few notions and fix the notation.  We refer to \cite{Blackadar,Sakai,Takesaki} 
for more details.
Let us recall that a \Ca{} $A$ is a complex Banach $\ast$-algebra satisfying $\Vert x^\ast x\Vert=\Vert x\Vert^2$ for every $x\in A$.  We denote by $A_{sa}$ the self-adjoint part of $A$, that is,  $A_{sa}=\set{x\in A}{x=x^\ast}$.
$A_{sa}$ is a real vector space and when endowed with the partial order $\leq$ induced by the cone $A^+:=\set{x^\ast x}{x\in A}$ it gets the structure of an ordered vector space.   In general $A_{sa}$ is far from being a Riesz space.  In \cite{Sherman} it is shown that if $A_{sa}$ is a lattice then $A$ is abelian.  Let $A^1$ denote the closed unit ball of $A$ and let $A_{sa}^1:=A_{sa}\cap A^1$.  An element $p$ of a \Ca{} is called a projection if $p=p^\ast=p^2$.   A \Ca{} may have no non-trivial projections. A linear functional $\varphi$ on $A$ is positive (resp. faithful) if $\varphi|A_{sa}$ is positive (resp. faithful) in the sense of subsection \ref{s2.1}.

A von Neumann algebra $M$ is a \Ca{} that is simultaneously a dual as a Banach space.  In this case $M$ is the dual of a unique Banach space -- called the predual of $M$ and denoted by $M_\ast$.  A linear functional $\varphi$ on $M$ is normal if $\varphi|\saM$ is normal
in the sense described for ordered vector spaces\footnote{Note that this is equivalent to requiring that $\varphi(x_\gamma)\rightarrow \varphi(x)$ for every  net $(x_\gamma)$ in $M_{sa}$ satisfying $x_\gamma\uparrow x$.  This follows because every normal linear functional can be expressed as a linear combination of four normal positive linear functionals.}.  It is known that we can identify the elements of $M_\ast$ with the normal linear functionals in the continuous dual $M^\ast$.  The set of normal positive linear functionals on $M$ is denoted by $M_\ast^+$.  $M$ has always an identity element  $\mathds 1$ and this element is an order-unit for $\saM$.   We recall that $(\saM,\leq)$ is  conditional monotone order complete.  A von Neumann algebra is always rich in projections.  In fact,  a von Neumann algebra is the closure of the span of its projections.     
The set $P(M)$ of all projections in $M$ is a complete orthomodular lattice under the partial order $\leq$ inherited from $\saM$.  
$M$ is called $\sigma$-finite  if every set of nonzero pairwise orthogonal projections in $M$  is at most countable. $M$ is $\sigma$-finite if and only if it admits a faithful normal positive linear functional. 
 For a Hilbert space $H$ we denote by $B(H)$ the \vNa{} of all bounded operators acting on $H$.

For the rest of the paper $M$ is always a von Neumann algebra.  We shall primarily consider the order topology (and the corresponding sequential variant) on the following three posets: $\saM$\,, $\saM^1$ and $P(M)$.  We  shall study the properties of the order topology of these posets, compare the order topology to other standard locally convex topologies on $M$  and relate the properties of the order topology to the underlying algebraic structure of $M$.

We recall that the weak$^\ast$-topology $\sigma(M,M_\ast)$ on $M$
is the coarsest locally convex topology compatible with the duality $\langle M_\ast,M\rangle$.  The finest locally convex topology on $M$ compatible with this duality is the Mackey topology
 $\tau(M,M_\ast)$.
Lying between these topologies we have the $\sigma$-strong topology $s(M,M_\ast)$ determined by the family of seminorms $\set{\ro_\psi}{\psi\in M_{\ast}^+}$ where $\ro_{\psi}(x)=\sqrt{\psi(x^\ast x)}$ and the $\sigma$-strong$^\ast$ topology $s^\ast(M,M_\ast)$
determined by the family of seminorms $\set{\eta_\psi}{\psi\in M_{\ast}^+}$ where $\eta_\psi(x)=\sqrt{\psi(x^\ast x)+\psi(x x^\ast)}$.  
$M$ can be faithfully represented on a Hilbert space $H$, i.e. $M$ can be identified with a subalgebra of  $B(H)$ closed w.r.t. the weak operator topology and therefore one can endow $M$ with the 
 strong operator topology $\tau_s$ and the 
 weak operator topology $\tau_w$.   These are  the topologies of pointwise convergence w.r.t. the norm topology or the weak topology on $H$, respectively. Note however that $\tau_s$ and $\tau_w$ in general depend on the particular representation.
It is well known  that
\begin{equation}\label{e0}
\begin{split}
\sigma(M,M_\ast)\subseteq s(M,M_\ast)\subseteq s^\ast(M,M_\ast) \subseteq \tau(M,M_\ast)\,,\\
\tau_w \subseteq \tau_s\,,\qquad \tau_w\subseteq \sigma(M,M_\ast)\,,\qquad\tau_s\subseteq  s(M,M_\ast)\,.
\end{split}
\end{equation} 

By the uniform boundedness principle it follows that if $A$ is a set of bounded linear operators on a Hilbert space that is bounded w.r.t. the weak operator topology then $A$ uniformly bounded.  Hence, in view of \eqref{e0} a subset $K$ of $M$ that is bounded w.r.t. any of the above locally convex topologies is uniformly bounded.  Furthermore, we recall that if $x,y\in\saM$  then: {\rm(i)}~$-y\le x\le y$ implies $\|x\|\le \|y\|$; and {\rm(ii)}~$-\|x\|\mathds 1\le x \le\|x\|\mathds 1$; i.e. if $K\subseteq \saM$ then $K$ is bounded (w.r.t. any of the above locally convex topologies) if and only if it is order bounded.

On  bounded parts of $M$ the $\sigma$-strong topology coincides with the strong operator topology and the weak$^\ast$ topology coincides with the weak operator topology.  Moreover, a deep classical result by C.~Akemann \cite{Akemann} says that 
\begin{equation}\label{Ake}
s^\ast(M,M_\ast)|K=\tau(M,M_\ast)|K
\end{equation}
for every bounded subset $K$ of $M$.
 Since $s(M,M_\ast)$ and $s^\ast(M,M_\ast)$ coincide on $\saM$, it follows that 
\begin{equation}\label{e1}
\tau_s|K=s(M,M_\ast)|K=\tau(M,M_\ast)|K.
\end{equation}
for every bounded subset $K$ of $\saM$.

Let $\tau_u$ denote the uniform topology (i.e. $\|\cdot\|$-topology) on $M$.  We show that $\tau_u|\saM$ is finer than the sequential order topology (and hence than the order topology) of $\saM$.  Suppose that $(x_n)_{n\in\N}$ is a sequence in $\saM$ such that $\Vert x_n\Vert\to 0$.  If we set $\lambda_n:=\sup_{k\ge n}\Vert x_k\Vert$ then
\[-\lambda_n\mathds 1\le-\Vert x_n\Vert\mathds 1\le x_n\le \Vert x_n\Vert\mathds 1\le \lambda_n\mathds 1\]
and $\lambda_n\mathds 1\downarrow 0$ in $\saM$, i.e. $x_n\xrightarrow{o} 0$ in $\saM$.

We shall now compare the order topology $\tau_o(\saM)$ with the $\sigma$-strong topology $s(M,M_\ast)$.  If $(x_\gamma)_{\gamma\in\Gamma}$ is a net in $\saM^+$ and $x_\gamma\xrightarrow{o} 0$ in $\saM$ then $\psi(x_\gamma)\to 0$ for every $\psi\in M_\ast^+$.  The Cauchy-Schwartz inequality yields $\psi(x_\gamma^2)\le\sqrt{\psi(x_\gamma)\psi(x_\gamma^3)}$
 and therefore one obtains $x_\gamma\st{M}0$ by observing that the net $(x_\gamma)_{\gamma\in\Gamma}$ is eventually bounded. 
 Now suppose that  $y_\gamma\xrightarrow{o} y$ 
in $\saM$.  Let $(a_\gamma)_{\gamma\in\Gamma}$ and $(b_\gamma)_{\gamma\in\Gamma}$ be nets in $\saM$ such that $a_\gamma\le y_\gamma\le b_\gamma$, $a_\gamma\uparrow y$ and $b_\gamma\downarrow y$.  Then $y_\gamma-a_\gamma\ge 0$ for every $\gamma\in\Gamma$ and $y_\gamma-a_\gamma\xrightarrow{o} 0$.  The above observation implies that  $y_\gamma-a_\gamma\st{M} 0$ and $y-a_\gamma\st{M} 0$.  The linearity of $s(M,M_\ast)$ implies that $y_\gamma\st{M} y$.  Thus, we conclude that
 \begin{equation}\label{e2}
 s(M,M_\ast)|\saM\subseteq \tau_o(\saM).
 \end{equation}
 In particular $\tau_o(\saM)$ is Hausdorff and $\saM^1$ is $\tau_o(\saM)$-closed.  The inclusion in \eref{e2} together with the equality of \eref{e1} imply that $\tau(M,M_\ast)|K\subseteq\tau_o(\saM)|K$ for every bounded subset $K$ of $\saM$.  Using the fact that an order convergent net of $\saM$ is eventually bounded, it is easy to see that a subset $X$ of $\saM$ is closed w.r.t. $\tau_o(\saM)$ if and only if $X\cap r\saM^1$ is closed w.r.t. $\tau_o(\saM)$ for every $r>0$.  Hence, if $X\subseteq \saM$ is $\tau(M,M_\ast)$-closed then $X\cap r\saM^1$ is $\tau(M,M_\ast)$-closed and therefore one obtains that $X\cap r\saM^1$ is $s(M,M_\ast)$-closed applying (\ref{e1}) to the $s(M,M_\ast)$-closed set $K:=r\saM^1$.
Then \eref{e2} implies that $X\cap r\saM^1$ is $\tau_o(\saM)$-closed.  This holds for every $r>0$ and therefore the following inclusion  follows
 \begin{equation}\label{e3}
 \tau(M,M_\ast)|\saM\subseteq \tau_o(\saM).
 \end{equation}

We summarize the above observations in {\rm(2.\ref{eq0})} of the following proposition.  Since $\saM^1$ and $P(M)$ are $s(M,M_\ast)$-closed,  {\rm (2.\ref{eq1})--(2.\ref{eq3})}follow by \eref{e2} and \propref{1.6}{\rm(i)}.

 \begin{prop}\label{1.7} The following inclusions hold.
  \begin{enumerate}[{\rm({2.}1)}]
 \setcounter{enumi}{5}
\item\label{eq0} $s(M,M_\ast)|\saM\subseteq\tau(M,M_\ast)|\saM\subseteq \tau_o(\saM)\subseteq\tau_{os}(\saM)\subseteq\tau_u|\saM$
\item\label{eq1} $\tau_o(\saM)|\saM^1\subseteq\tau_o(\saM^1)$
\item\label{eq2}  $\tau_o(\saM^1)|P(M)\subseteq \tau_o(P(M))$
\item\label{eq3} $\tau_o(\saM)|P(M)\subseteq\tau_o(P(M))$
\end{enumerate}
\end{prop}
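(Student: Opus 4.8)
The plan is to prove the four inclusions in two groups: the chain (2.\ref{eq0}) is pure bookkeeping, assembled from equations already derived in the running text, while (2.\ref{eq1})--(2.\ref{eq3}) all issue from a single device — equation \eref{e2} promotes $s(M,M_\ast)$-closedness to $\tau_o$-closedness, which is exactly the hypothesis \propref{1.6}{\rm(i)} requires.

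First I would dispose of (2.\ref{eq0}) by reading the chain left to right. The inclusion $s(M,M_\ast)|\saM\subseteq\tau(M,M_\ast)|\saM$ is the restriction to $\saM$ of the standard ordering \eref{e0}; the inclusion $\tau(M,M_\ast)|\saM\subseteq\tau_o(\saM)$ is literally \eref{e3}; the inclusion $\tau_o(\saM)\subseteq\tau_{os}(\saM)$ is the general poset fact from \secref{s2.1}. For the last inclusion $\tau_{os}(\saM)\subseteq\tau_u|\saM$ I would use the observation recorded just before the proposition, that $\Vert x_n\Vert\to 0$ forces $x_n\xrightarrow{o}0$: given a $\tau_{os}$-closed (hence sequentially order closed) set $C$ and a sequence in $C$ converging in norm to $x$, translation invariance of order convergence gives $x_n\xrightarrow{o}x$, so $x\in C$; since $\tau_u$ is metrizable, sequential $\tau_u$-closedness is $\tau_u$-closedness, and $C$ is $\tau_u$-closed.

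Next I would handle (2.\ref{eq1})--(2.\ref{eq3}) by checking, in each case, the two hypotheses of \propref{1.6}{\rm(i)}: conditional monotone completeness of the ambient poset, and $\tau_o$-closedness of the subposet. Conditional monotone completeness of $\saM$ is part of the recalled structure theory; that of $\saM^1$ I would verify by noting that an increasing net in $\saM^1$ is bounded above by $\mathds 1$, so its supremum in $\saM$ exists and, being caught between $-\mathds 1$ and $\mathds 1$, stays in $\saM^1$. The closedness hypotheses I would extract from \eref{e2}: since $s(M,M_\ast)|\saM\subseteq\tau_o(\saM)$, every $s(M,M_\ast)$-closed subset of $\saM$ is $\tau_o(\saM)$-closed, so $\saM^1$ and $P(M)$ are $\tau_o(\saM)$-closed (the former is already noted in the text). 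This yields (2.\ref{eq1}) and (2.\ref{eq3}) at once. For (2.\ref{eq2}) I would bootstrap with (2.\ref{eq1}): restricting \eref{e2} to $\saM^1$ and composing with (2.\ref{eq1}) gives $s(M,M_\ast)|\saM^1\subseteq\tau_o(\saM^1)$, whence the $s(M,M_\ast)$-closed set $P(M)$ is $\tau_o(\saM^1)$-closed, and a last application of \propref{1.6}{\rm(i)} with ambient poset $\saM^1$ finishes the argument.

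The argument involves no serious obstacle; the one place demanding care is tracking the direction of the topology inclusions, since \eref{e2} is precisely what turns $s(M,M_\ast)$-closedness into $\tau_o$-closedness and one must keep in mind that passing to a finer topology only enlarges the supply of closed sets. Everything else is either a citation of an earlier equation or the routine check that the relevant suprema and infima remain inside the unit ball.
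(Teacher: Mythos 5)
Your proof is correct and takes essentially the same route as the paper: you assemble {\rm(2.\ref{eq0})} from \eref{e0}, \eref{e3}, the general inclusion $\tau_o(\saM)\subseteq\tau_{os}(\saM)$ and the norm-convergence-implies-order-convergence observation, and you derive {\rm(2.\ref{eq1})}--{\rm(2.\ref{eq3})} from \eref{e2} together with \propref{1.6}{\rm(i)} and the $s(M,M_\ast)$-closedness of $\saM^1$ and $P(M)$, which is precisely the paper's argument. The only difference is that you make explicit details the paper leaves implicit (the conditional monotone completeness of $\saM^1$ and the bootstrap through {\rm(2.\ref{eq1})} to get $\tau_o(\saM^1)$-closedness of $P(M)$ for {\rm(2.\ref{eq2})}), which is a faithful filling-in rather than a deviation.
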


\begin{lem}\label{4.2}
 Let $0\le x\le \mathds 1$ in $M$.
\begin{enumerate}[{\rm(i)}]
\item  If $p$ is a projection, then  $x\ge p$ if and only if   $px=xp=p$.
 \item If  $\set{p_\lambda}{\lambda\in\Lambda}$ is a set in $P(M)$ and $x\in\saM^1$ satisfies $x\ge p_\lambda$ for every $\lambda\in\Lambda$ then $x\ge p$ where $p=\bigvee_{\lambda\in\Lambda} p_\lambda$ in $P(M)$.
\end{enumerate}
\end{lem}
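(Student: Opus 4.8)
The plan is to prove (i) as a self-contained $C^\ast$-algebraic fact and then obtain (ii) by feeding (i) into the description of the join in $P(M)$.

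For (i), I would argue both implications directly. For the ``if'' direction, assuming $px=xp=p$ I would expand $(\mathds 1-p)x(\mathds 1-p)=x-px-xp+pxp$ and simplify using $xp=px=p$ and $pxp=p$ to the identity $(\mathds 1-p)x(\mathds 1-p)=x-p$; since $x\ge 0$ and conjugation by $\mathds 1-p$ preserves positivity, this exhibits $x-p$ as a positive element, giving $x\ge p$. For the converse, starting from $p\le x\le\mathds 1$ I would conjugate these inequalities by $p$ and use $p\,p\,p=p$ to get $p=p\,p\,p\le pxp\le p\,\mathds 1\,p=p$, so that $pxp=p$ and hence $p(\mathds 1-x)p=0$. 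Writing $z:=(\mathds 1-x)^{1/2}$, which lies in $M$ and is positive because $\mathds 1-x\ge 0$, the relation $(zp)^\ast(zp)=pz^2p=p(\mathds 1-x)p=0$ forces $zp=0$, whence $(\mathds 1-x)p=z(zp)=0$, i.e. $xp=p$; taking adjoints gives $px=p$.

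For (ii), the top hypothesis guarantees $0\le x\le\mathds 1$, so by (i) each inequality $x\ge p_\lambda$ is equivalent to $(\mathds 1-x)p_\lambda=0$, equivalently (via $(zp_\lambda)^\ast(zp_\lambda)=0$) to $zp_\lambda=0$, equivalently to $\range p_\lambda\subseteq\ker z$. I would then invoke the standard fact that $p=\bigvee_\lambda p_\lambda$ is the projection onto the closed linear span of the ranges of the $p_\lambda$: since $\ker z$ is a closed subspace containing every $\range p_\lambda$, it contains this closed span, so $\range p\subseteq\ker z$ and therefore $zp=0$. This gives $(\mathds 1-x)p=0$, i.e. $xp=p$, and a final application of (i) yields $x\ge p$.

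The only step that is more than bookkeeping is the passage in (ii) from ``$zp_\lambda=0$ for every $\lambda$'' to ``$zp=0$''; this is where I must use the lattice structure of $P(M)$ rather than mere $C^\ast$-algebra, exploiting that a supremum of projections in a \vNa{} is the projection onto the closed span of the ranges and that $\ker z$ is closed. Alternatively I could route all of (ii) through the spectral projection $e:=E_x(\{1\})$ of $x$: using (i) one checks that for a projection $q$ one has $x\ge q$ if and only if $q\le e$, so $e$ is an upper bound of $\set{p_\lambda}{\lambda\in\Lambda}$ and hence $p\le e$, giving $x\ge p$ at once.
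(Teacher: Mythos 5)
Your proof is correct; the genuine difference lies in part (i). The paper argues spatially: it represents $M$ on a Hilbert space $H$ and proves $x\ge p\Rightarrow xp=p$ by noting that a unit vector $\xi$ in the range of $p$ satisfies $1\ge(x\xi,\xi)\ge(\xi,\xi)=1$, so that the equality case of Cauchy--Schwarz forces $x\xi=\xi$. Your version is representation-free: you get $pxp=p$ by conjugating $p\le x\le\mathds 1$ by $p$, and then extract $xp=p$ via the square-root trick $(zp)^\ast(zp)=p(\mathds 1-x)p=0\Rightarrow zp=0$ with $z=(\mathds 1-x)^{1/2}$. (Your ``if'' direction, $(\mathds 1-p)x(\mathds 1-p)=x-p\ge 0$, is the same computation as the paper's $x=p+(\mathds 1-p)x\ge p$.) For part (ii) the two arguments are in substance identical: the paper uses that the fixed-point set $\set{\xi\in H}{x\xi=\xi}$ is a closed subspace containing the range of each $p_\lambda$, hence the range of $p$; you use $\ker z$ --- the very same subspace --- together with the standard fact that $\bigvee_\lambda p_\lambda$ is the projection onto the closed span of the ranges. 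So, as you yourself observe, spatial reasoning re-enters at exactly this point in both proofs. What your (i) buys is a purely $C^\ast$-algebraic proof of a purely $C^\ast$-algebraic statement, valid without choosing a representation; what the paper's buys is brevity, since the paper works throughout with $M$ acting on $H$ anyway. Your alternative ending for (ii) via $e:=E_x(\{1\})$ is also sound: by (i), a projection $q$ satisfies $x\ge q$ if and only if $q\le e$, so $e$ is the largest projection dominated by $x$, and $p\le e$ gives the conclusion at once.
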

\begin{proof} Let us suppose that $M$ acts on a Hilbert space $H$.\\ 
{\rm(i)}~If $x\ge p$ then for any unit vector $\xi$ in  $H$  that lies in the range of projection $p$ we have $1\ge (x\xi, \xi)\ge (\xi, \xi)=1$.
So by the Cauchy-Schwartz inequality we deduce that $x\xi=\xi$.
Consequently, $xp=p.$
Conversely,  if $xp=p$ then $p$ and $x$ commute and therefore $(\mathds 1-p)x=(\mathds 1-p) x (\mathds 1-p)\ge 0$.  Hence $x=p+(\mathds 1-p)x \ge p.$\\
{\rm(ii)}~If $x\ge p_\lambda$ for every $\lambda\in\Lambda$ then $x\xi=\xi$ for every vector $\xi$ in the range of $p$.  Hence $xp=p=px$ and thus $x\ge p$ by {\rm(i)}.
\end{proof}

\begin{prop}\label{p1} The following  statements are equivalent:
\begin{enumerate}[{\rm(i)}]
\item $M$ is abelian,
\item $\tau_o(\saM)|P(M)=\tau_o(\saM^1)|P(M)$,
\item $\tau_o(\saM)|P(M)=\tau_o(P(M))$,
\item $\tau_o(\saM)|\saM^1=\tau_o(\saM^1)$.
\end{enumerate}
\end{prop}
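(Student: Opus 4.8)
The plan is to prove the cycle by first recording what is already available and then isolating the one nontrivial step. By \propref{1.7} we have the chain $\tau_o(\saM)|P(M)\subseteq\tau_o(\saM^1)|P(M)\subseteq\tau_o(P(M))$: the first inclusion is \eref{eq1} restricted to $P(M)$, the second is \eref{eq2}. Hence (iii) forces both inclusions to be equalities and in particular yields (ii), while (iv) yields (ii) upon restricting $\tau_o(\saM)|\saM^1=\tau_o(\saM^1)$ further to $P(M)$. So it will suffice to establish (i)$\Rightarrow$(iii), (i)$\Rightarrow$(iv) and (ii)$\Rightarrow$(i); the first two are routine and the last is the crux.

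For the two easy implications I would argue as follows. When $M$ is abelian $\saM$ is a Riesz space, and being a lattice that is conditionally monotone complete it is Dedekind complete. Both $\saM^1$ and $P(M)$ are sublattices of $\saM$ (for commuting projections $p\vee q=p+q-pq$ and $p\wedge q=pq$ coincide with the Riesz operations), and both are $\tau_o(\saM)$-closed: $\saM^1$ is $\tau_o(\saM)$-closed as recorded after \eref{e2}, and $P(M)$ is $s(M,M_\ast)$-closed, hence $\tau_o(\saM)$-closed by \eref{e2}. \propref{1.6}(ii) then gives $\tau_o(\saM)|\saM^1=\tau_o(\saM^1)$ and $\tau_o(\saM)|P(M)=\tau_o(P(M))$, i.e. (iv) and (iii).

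The heart is (ii)$\Rightarrow$(i), which I would prove contrapositively: if $M$ is not abelian I exhibit a subset of $P(M)$ that is $\tau_o(\saM^1)|P(M)$-closed but not $\tau_o(\saM)|P(M)$-closed. Since $M$ is non-abelian, structure theory yields nonzero orthogonal Murray--von Neumann equivalent projections $e_1\sim e_2$ with a partial isometry $v$ satisfying $v^\ast v=e_1$, $vv^\ast=e_2$; put $e:=e_1+e_2$ and, for $\theta\in\R$,
\[
p_\theta:=\cos^2\theta\,e_1+\sin^2\theta\,e_2+\cos\theta\sin\theta\,(v+v^\ast)\in P(M).
\]
These are the images of the rank-one projections of the copy of $M_2(\C)$ generated by the matrix units, so $p_0=e_1$, $\|p_\theta-e_1\|\to0$ as $\theta\to0$, and for $\theta\not\equiv0\ (\mathrm{mod}\ \pi)$ one has $e_1\wedge p_\theta=0$ and $e_1\vee p_\theta=e$ in $P(M)$. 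Set $S:=\set{p_{1/n}}{n\in\N}$. Because $\|p_{1/n}-e_1\|\to0$, the computation preceding \eref{e2} gives $p_{1/n}\xrightarrow{o}e_1$ in $\saM$, whence $p_{1/n}\to e_1$ in $\tau_o(\saM)$ and a fortiori in $\tau_o(\saM)|P(M)$; thus every $\tau_o(\saM)|P(M)$-closed set containing $S$ must contain $e_1$.

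It then remains to prove that $S$ is $\tau_o(\saM^1)$-closed, for then $S=S\cap P(M)$ is $\tau_o(\saM^1)|P(M)$-closed while, since $e_1\notin S$, it is not $\tau_o(\saM)|P(M)$-closed, giving $\neg$(ii). The decisive point, and the main obstacle, is that although $p_{1/n}\xrightarrow{o}e_1$ in $\saM$, \emph{no} net in $S$ order converges to $e_1$ inside $\saM^1$: an upper witness would be a net $\beta_\gamma\in\saM^1$ with $\beta_\gamma\ge p_{1/n(\gamma)}$ and $\beta_\gamma\downarrow e_1$, so $\beta_\gamma\ge e_1$, and since each $p_{1/n(\gamma)}\ne e_1$ gives $e_1\vee p_{1/n(\gamma)}=e$, \lemref{4.2}(ii) forces $\beta_\gamma\ge e$ for every $\gamma$, whence $\inf_\gamma\beta_\gamma\ge e>e_1$, contradicting $\beta_\gamma\downarrow e_1$. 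Here non-commutativity is essential: in $\saM$ the witness $e_1+\lambda_n\mathds 1$ does the job but leaves $\saM^1$, and only the lattice obstruction of \lemref{4.2}(ii) rules out a replacement inside the unit ball. To finish, let $(q_\gamma)$ be any net in $S$ order converging in $\saM^1$ to some $y$, and write $q_\gamma=p_{1/n(\gamma)}$. If some value $m$ occurs cofinally, the constant subnet forces $y=p_{1/m}\in S$ by Hausdorffness of $\tau_o(\saM^1)$ (which is finer than the Hausdorff topology $\tau_o(\saM)|\saM^1$). Otherwise $n(\gamma)\to\infty$, so $q_\gamma\to e_1$ in norm and hence in $\tau_o(\saM)|\saM^1$; comparing there with the limit $y$ (the $\tau_o(\saM^1)$-convergence passes to the coarser $\tau_o(\saM)|\saM^1$) gives $y=e_1$ by Hausdorffness, contradicting the previous sentence. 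Thus every order-convergent net in $S$ has its limit in $S$, so $S$ is $\tau_o(\saM^1)$-closed, which completes $\neg$(i)$\Rightarrow\neg$(ii) and hence the proof.
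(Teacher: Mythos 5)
Your proposal is correct and takes essentially the same route as the paper: the implications (i)$\Rightarrow$(iii),(iv), (iii)$\Rightarrow$(ii) and (iv)$\Rightarrow$(ii) are handled identically (Dedekind completeness of $\saM$ in the abelian case plus \propref{1.6}(ii), and the inclusions of \propref{1.7}), and your crucial step (ii)$\Rightarrow$(i) rests on exactly the paper's mechanism -- a copy of $M_2(\C)$ spanned by matrix units inside a non-abelian $M$, with \lemref{4.2}(ii) forcing any element of $\saM^1$ dominating two distinct rank-one projections of that copy to dominate their join $e$, which blocks order convergence in $\saM^1$. The only presentational difference is that you exhibit a single explicit norm-convergent sequence $S=\set{p_{1/n}}{n\in\N}$ that is $\tau_o(\saM^1)$-closed but not relatively $\tau_o(\saM)$-closed, whereas the paper proves the slightly stronger statement that $\tau_o(\saM^1)|P(N)$ is discrete while $\tau_o(\saM)|P(N)$ is not.
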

\begin{proof}
When $M$ is abelian $(\saM,\le)$ is a  Dedekind complete lattice and since $\saM^1$ and $P(M)$ are $s(M,M_\ast)$-closed sublattices of $\saM$, it follows by (\ref{e2}) and \propref{1.6} (ii) that $\tau_o(\saM)|\saM^1=\tau_o(\saM^1)$ and $\tau_o(\saM)|P(M)=\tau_o(\saM^1)|P(M)=\tau_o(P(M))$.

When $M$ is not abelian it contains a von Neumann subalgebra $N$ (not necessarily unital) that is $\ast$-isomorphic to $B(H_2)$ where $H_2$ is a two-dimensional Hilbert space.  We will identify $N$ with $B(H_2)$.  We show that $\tau_o(M_{sa}^1)|P(N)$ is discrete.  To this end we suppose that $(p_\gamma)_{\gamma\in\Gamma}$ is a net of projections in $N$ that order converges in $(\saM^1,\le)$, say to $p$.  (Note that $p$ is a also a projection in $N$ because $P(N)$ is $s(M,M_\ast)$-closed and order convergence in $\saM^1$ implies convergence w.r.t. $s(M,M_\ast)$.) Suppose, for contradiction, that $(p_\gamma)_{\gamma\in\Gamma}$ is not eventually constant. The inclusions
\[\tau_o(\saM)|N_{sa}\supseteq s(M,M_\ast)|N_{sa}=\tau_u|N_{sa}
\]
and {\rm ({2.}\ref{eq1})} imply that $p_\gamma\xrightarrow{\tau_u}p$ and therefore $p\notin\{0,\mathds 1_N\}$.  We can thus assume that the range of $p_\gamma$ is one-dimensional for every $\gamma\in\Gamma$.  Lemma \ref{4.2} implies that if $x\in M_{sa}^1$ satisfies $x\ge p_\gamma$ for every $\gamma\ge \gamma'$ then $x\ge \bigvee_{\gamma\ge \gamma'}p_\gamma=\mathds 1_N$.  This implies that  $p=\mathds 1_N$, a contradiction.  Thus, every subset of $P(N)$ is $\tau_o(\saM^1)$-closed, i.e. $\tau_o (\saM^1)|P(N)$ is discrete.  On the other-hand, observe that $\tau_o(\saM)|P(N)\subseteq \tau_u|P(N)$, i.e. $\tau_o(\saM)|P(N)$ is not discrete.  Thus, we have proved that if {\rm(ii)} is true then $M$ is abelian.  

If {\rm(iii)} is true, then we combine with ({\rm 2.\ref{eq1}}) and ({\rm 2.\ref{eq2}}) to obtain
\[\tau_o(P(M))=\tau_o(\saM)|P(M)\subseteq\tau_o(\saM^1)|P(M)\subseteq\tau_o(P(M)),\]
i.e. {\rm(iii)} implies {\rm(ii)}.  The implication {\rm(iv)} $\Rightarrow$ {\rm(ii)} is trivial.

\end{proof}

Proposition \ref{p1} implies that the inclusion in {\rm (2.\ref{eq1})} and {\rm (2.\ref{eq3})} are proper for nonabelian von Neumann algebras .  In contrast,  in the proof of Proposition \ref{p1} it is shown that when $M=B(H_2)$ then the inclusion in {\rm(2.\ref{eq2})} is an equality.  The question on when we get an equality in {\rm(2.\ref{eq2})} will be dealt with in Section \ref{section4};  in fact, we shall prove that for  $\sigma$-finite von Neumann algebras this characterizes finiteness.

\begin{rem}\label{r1}
Let $M$ have an infinite linear dimension; then it contains a sequence of pairwise orthogonal projections $(p_n)_{n\in\N}$:

\noindent {\rm(i)}~Using the fact that every order convergent net is eventually bounded it is easy to see that the set $\set{\sqrt{n}p_n}{n\in\N}$ is closed w.r.t. $\tau_o(\saM)$.  On the other-hand $0$ lies in the $s(M,M_\ast)$-closure of $\set{\sqrt{n}p_n}{n\in\N}$.  So $s(M,M_\ast)|\saM\subsetneq \tau_o(\saM)$.

\noindent {\rm(ii)}~The sequence $(p_n)_{n\in\N}$ satisfies $\limsup_n p_n=\liminf_n p_n=0$, i.e. it order converges to $0$ in $(P(M),\le)$.  Thus {\rm {2.}(\ref{eq3})} implies that $(kp_n)_{n\in\N}$ converges to $0$ w.r.t. $\tau_o(\saM)$ for every $k\in\N$.  For every $\tau_o(\saM)$-neighbourhood  $U$ of $0$ there exists $n(k,U)\in \N$ such that $kp_n\in U$ for every $n\ge n(k,U)$.  Define
    \[\mathcal N:=\set{(k,U)}{k\in\N,\,U\text{ is a }\tau_o(\saM)\text{-neighbourhood of }0}\,\]
    and equip it with the partial order defined by $(k_1,U_1)\le (k_2,U_2)$ if and only if $k_1\le k_2$ and $U_2\subseteq U_1$.
    Then $\mathcal N$ is an upward directed set.  We can define a net $\bigl(x_{(k,U)}\bigr)_{(k,U)\in\mathcal N}$ by setting
$x_{(k,U)}:=kp_{n(k,U)}$.  It is clear that this net is not eventually bounded despite being convergent to $0$ w.r.t. $\tau_o(\saM)$. Observe further that no subnet of this net is eventually bounded and therefore no subnet is order convergent in $\saM$.
\end{rem}

 In contrast to the  example exhibited in {\rm(ii)} of the previous remark let us observe that any sequence converging in the order topology is bounded.  
 Item {\rm(ii)} of the previous remark suggests (particularly in view of Proposition \ref{1.1}) that a favoured case occurs when the sequential order topology coincides with the order topology because in this case -- at least for sequences -- convergence w.r.t. the order topology can be described by order convergent subsequences.  The following proposition says that this occurs precisely when $M$ is $\sigma$-finite.

\begin{prop}\label{2.1}
The following three statements are equivalent:
 \begin{enumerate}[{\rm(i)}]
 \item   $M$ is $\sigma$-finite,
 \item $\tau_{os}(M_{sa})=\tau_{o}(M_{sa})$,
 \item $\tau_{os}(M_{sa}^1)=\tau_{o}(M_{sa}^1)$,
 \item $\tau_{os}(P(M))=\tau_{o}(P(M))$. 
\end{enumerate} 
\end{prop}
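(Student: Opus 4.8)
The plan is to funnel all four conditions through the single notion of monotone order separability, using \propref{1.2}. Since $\tau_{os}(P)=\tau_o(P)$ holds exactly when $(P,\le)$ is monotone order separable, condition (ii) is equivalent to ``$\saM$ is monotone order separable'', (iii) to the same statement for $\saM^1$, and (iv) to the same statement for $P(M)$. Thus it suffices to prove that $\sigma$-finiteness of $M$ and monotone order separability of each of the three posets $\saM$, $\saM^1$, $P(M)$ are mutually equivalent. I would do this by showing that $\sigma$-finiteness forces monotone order separability of all three, and that its failure destroys it for all three simultaneously.

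For the forward direction I would first settle $\saM$. Being $\sigma$-finite, $M$ admits a faithful normal positive linear functional, and $(\saM,\le)$ is conditional monotone $\sigma$-complete, so \propref{1.4} applies verbatim and yields that $\saM$ is monotone order separable. To transfer this to $\saM^1$ and $P(M)$ I would use that the relevant suprema (and, symmetrically, infima) computed in the subposets coincide with those computed in $\saM$: a bounded increasing net in $\saM^1$ has its $\saM$-supremum dominated by $\mathds 1$, hence already lying in $\saM^1$; and the order supremum in $\saM$ of an increasing net of projections is again a projection (the projection onto the closed span of the ranges), hence equals the supremum taken in $P(M)$. Consequently an increasing index sequence witnessing monotone order separability for a given net in $\saM$ witnesses it equally for that net viewed in $\saM^1$ or in $P(M)$.

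For the converse I would argue contrapositively. If $M$ is not $\sigma$-finite, it carries an uncountable family $\set{p_\lambda}{\lambda\in\Lambda}$ of nonzero pairwise orthogonal projections. Indexing by the finite subsets $F\subseteq\Lambda$ ordered by inclusion, the net of partial sums $x_F:=\sum_{\lambda\in F}p_\lambda$ is increasing, consists of projections, and has supremum $p:=\bigvee_{\lambda\in\Lambda}p_\lambda$; crucially the very same net lives in $P(M)$, in $\saM^1$ and in $\saM$, with supremum $p$ in each. For any increasing sequence $(F_n)_{n\in\N}$ of indices the union $\bigcup_{n\in\N}F_n$ is countable, so $\bigvee_{n\in\N} x_{F_n}=\bigvee_{\lambda\in\bigcup_n F_n}p_\lambda$ omits some $p_{\lambda_0}$ and therefore sits strictly below $p$. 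Hence no index sequence reaches the supremum, and none of the three posets is monotone order separable. Combining the two directions closes the equivalence.

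The principal subtlety, and the reason one cannot simply invoke the restriction results of \propref{1.6} and its sequential analogue, is that for nonabelian $M$ neither $\saM^1$ nor $P(M)$ is a sublattice of $\saM$. The transfer of monotone order separability from $\saM$ to the subposets must therefore be justified through the coincidence of suprema described above rather than through any sublattice argument; carefully checking that these suprema genuinely agree in both directions — and, in the converse, that the single counterexample net is order convergent to $p$ in all three posets at once — is where the real work lies, the rest being a bookkeeping application of \propref{1.2}.
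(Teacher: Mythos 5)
Your proof is correct and follows essentially the same route as the paper: both funnel all four conditions through monotone order separability via \propref{1.2}, use \propref{1.4} for the $\sigma$-finite direction, transfer separability from $\saM$ down to $\saM^1$ and $P(M)$ by identifying the suprema computed in the three posets, and refute separability via an uncountable family of nonzero orthogonal projections when $M$ is not $\sigma$-finite. The only immaterial difference is that you justify the coincidence of suprema by direct order-theoretic arguments (the supremum is dominated by $\mathds 1$; the supremum of an increasing net of projections is a projection), whereas the paper gets the same coincidence from the $\sigma$-strong convergence of bounded monotone nets together with the $s(M,M_\ast)$-closedness of $\saM^1$ and $P(M)$.
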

\begin{proof} We recall that bounded monotonic nets in $\saM$ converge  w.r.t. $s(M,M_\ast)$ to their supremum/infimum.  Using that $\saM^1$ and $P(M)$ are $s(M,M_\ast)$-closed, it follows that if $\saM$ is monotone order separable then $\saM^1$ is monotone order separable; and if $\saM^1$ is monotone order separable then $P(M)$ is monotone order separable.
If $M$ is $\sigma$-finite then it admits a faithful normal positive linear functional and
so, by Proposition~\ref{1.4},  {\rm(i)}$\Rightarrow${\rm(ii)}$\Rightarrow${\rm(iii)}$\Rightarrow${\rm(iv)}.
When $M$ is not $\sigma$-finite, $P(M)$ contains an uncountable family of  nonzero orthogonal projections and thus it  is not monotone order separable, i.e. {\rm(iv)}$\Rightarrow${\rm(i)}.
\end{proof}

\section{Vector spaces with mixed topology}

 Now we consider the mixed topology on a vector space introduced and studied in detail in \cite{Wiweger}. We first list some of its basic known properties and then we add some new facts needed in the sequel.

 In this section let $X$ be a real vector space endowed with two linear Hausdorff topologies  $\tau$ and $\tau'$.
For each sequence $(U'_n)_{n\in\N}$ of $0$-neighbourhoods in $(X,\tau')$ and for each $0$-neighbourhood  $U$ in $(X,\tau)$   define
\[\gamma\bigl((U'_n)_{n\in\N},U\bigr):=\bigcup_{n\in\N}\sum_{i=1}^n (U'_i\,\cap\,iU).\]
Then the family of these sets 
is a basis  of $0$-neighbourhoods for some linear Hausdorff topology $\gamma[\tau,\tau']$ called  the \emph{mixed topology} determined by $\tau$ and $\tau'$.  It is clear that if $X$ is a complex vector space and the Hausdorff topologies $\tau$ and $\tau'$ are linear over $\mathds C$ then $\gamma[\tau,\tau']$ is also linear over $\mathds C$.

\begin{prop}\cite[2.1.1]{Wiweger}\label{W2.1}\begin{enumerate}[{\rm(i)}]
\item $\tau'\subseteq \gamma[\tau,\tau']$
\item If $\tau'\subseteq\tau$ then $\gamma[\tau,\tau']\subseteq \tau$.
\item If $\tau$ and $\tau'$ are locally convex, then $\gamma[\tau,\tau']$ is locally convex.
\end{enumerate}
\end{prop}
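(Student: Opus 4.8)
The three inclusions all compare topologies that are translation invariant, so in each case the plan is to reduce to comparing bases of $0$-neighbourhoods, playing the basic mixed neighbourhoods $\gamma\bigl((U'_n)_{n\in\N},U\bigr)=\bigcup_{n}\sum_{i=1}^n(U'_i\cap iU)$ against the $0$-neighbourhoods of the relevant comparison topology.

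For (i) the aim is to show that every $\tau'$-neighbourhood of $0$ contains a basic mixed neighbourhood. Given a $\tau'$-neighbourhood $W$ of $0$, I would use the linearity of $\tau'$ to choose, by repeated application of the continuity of addition, a sequence of $\tau'$-neighbourhoods $U'_n$ of $0$ with $\sum_{i=1}^n U'_i\subseteq W$ for every $n$: take $V_0=W$ and, inductively, $V_n$ with $V_n+V_n\subseteq V_{n-1}$, then set $U'_n=V_n$; a short downward induction on $k$ then gives $\sum_{i=k}^n V_i\subseteq V_{k-1}$, and the case $k=1$ is exactly what is wanted. For an arbitrary $\tau$-neighbourhood $U$ of $0$ one then has $\gamma\bigl((U'_n)_{n\in\N},U\bigr)\subseteq\bigcup_n\sum_{i=1}^n U'_i\subseteq W$, which yields $\tau'\subseteq\gamma[\tau,\tau']$. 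For (ii) the point is simpler: the term of the union indexed by $n=1$ is $U'_1\cap U$, so every basic mixed neighbourhood contains $U'_1\cap U$; when $\tau'\subseteq\tau$ the set $U'_1$ is itself a $\tau$-neighbourhood of $0$, whence $U'_1\cap U$ is a $\tau$-neighbourhood of $0$ and therefore so is $\gamma\bigl((U'_n)_{n\in\N},U\bigr)$. This gives $\gamma[\tau,\tau']\subseteq\tau$.

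For (iii) the plan is to produce a base of convex mixed neighbourhoods. Using local convexity of $\tau$ and $\tau'$, every basic mixed neighbourhood contains one of the form $\gamma\bigl((U'_n)_{n\in\N},U\bigr)$ with all $U'_n$ and $U$ convex (shrink each $U'_n$ and $U$ to a convex subneighbourhood), so these special sets already form a base. For such a choice I would first observe that each partial sum $S_n:=\sum_{i=1}^n(U'_i\cap iU)$ is convex, being a finite Minkowski sum of the convex sets $U'_i\cap iU$. Since $0$ lies in each $U'_{n+1}\cap(n+1)U$, the partial sums are nested, $S_n\subseteq S_{n+1}$, so $\gamma\bigl((U'_n)_{n\in\N},U\bigr)=\bigcup_n S_n$ is an increasing union of convex sets and hence convex. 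This exhibits a convex $0$-neighbourhood base and shows $\gamma[\tau,\tau']$ is locally convex.

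The only genuinely delicate step is the construction in (i): the sequence $(U'_n)$ must shrink fast enough that \emph{all} the partial sums $\sum_{i=1}^n U'_i$ remain inside the single prescribed neighbourhood $W$, and this is where the standard iterated halving of neighbourhoods is essential. Everything else reduces to tracking the $n=1$ term (for (ii)) and to the elementary fact that an increasing union of convex sets is convex (for (iii)).
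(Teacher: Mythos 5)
Your proof is correct. Note, however, that the paper itself gives no proof of this proposition: it is quoted verbatim from Wiweger's paper on mixed topologies (the citation [2.1.1] in the statement), so there is no in-paper argument to compare against. Your direct verification from the definition of the basic neighbourhoods $\gamma\bigl((U'_n)_{n\in\N},U\bigr)$ is the natural one: the iterated-halving construction with $V_n+V_n\subseteq V_{n-1}$ and the downward induction correctly handles the only delicate point in (i), namely that all partial sums $\sum_{i=1}^n U'_i$ must stay inside the single prescribed $\tau'$-neighbourhood $W$; the $n=1$ term argument for (ii) and the increasing-union-of-convex-sets argument for (iii) (valid since $0\in U'_{n+1}\cap(n+1)U$ makes the partial sums nested, and shrinking the ingredients to convex subneighbourhoods only shrinks the basic set) are both sound, given that the paper's definition already guarantees $\gamma[\tau,\tau']$ is a linear, hence translation-invariant, topology.
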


\begin{prop}\label{W2.2} \cite[2.2.1,2.2.2]{Wiweger}
\begin{enumerate}[{\rm(i)}]
\item $\gamma[\tau,\tau']|Z=\tau'|Z$ for every $\tau$-bounded subset $Z$ of $X$.
\item If $(X,\tau)$ is locally bounded, then $\gamma[\tau,\tau']$ is the finest of all linear topologies agreeing with $\tau'$ on every $\tau$-bounded subset of $X$.
\end{enumerate}
\end{prop}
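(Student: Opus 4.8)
The plan is to prove the two inclusions of each part separately, working throughout with the explicit neighbourhood base $\gamma\bigl((U'_n)_{n\in\N},U\bigr)=\bigcup_{n\in\N}\sum_{i=1}^n(U'_i\cap iU)$ and exploiting the single dilation parameter $i$ that appears in it.

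For (i), one inclusion is free: $\tau'\subseteq\gamma[\tau,\tau']$ by \propref{W2.1}(i), so $\tau'|Z\subseteq\gamma[\tau,\tau']|Z$. For the reverse inclusion I would show that every basic $\gamma[\tau,\tau']$-neighbourhood traces down to a $\tau'$-neighbourhood on $Z$. Fix $z\in Z$ and a basic neighbourhood $W=\gamma\bigl((U'_n)_{n\in\N},U\bigr)$; since the sets $z+W$ form a neighbourhood base at $z$ for $\gamma[\tau,\tau']$, it suffices to produce a $\tau'$-neighbourhood $V'$ of $0$ with $(z+V')\cap Z\subseteq z+W$. Set $B:=Z-z$, which is again $\tau$-bounded. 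Choosing $U$ balanced, $\tau$-boundedness gives an integer $N$ with $B\subseteq NU$, and I would then take $V':=\bigcap_{i=1}^N U'_i$. The key observation is that any $b\in V'\cap B$ lies in $U'_N\cap NU$, so placing $b$ in the $N$-th summand and $0$ in the others yields $b\in\sum_{i=1}^N(U'_i\cap iU)\subseteq W$. This gives $\gamma[\tau,\tau']|Z\subseteq\tau'|Z$, hence equality.

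For (ii), part (i) already shows that $\gamma[\tau,\tau']$ is a linear topology agreeing with $\tau'$ on every $\tau$-bounded set, so it remains to prove it is the finest such. Let $\rho$ be any linear topology agreeing with $\tau'$ on $\tau$-bounded sets; I would show $\rho\subseteq\gamma[\tau,\tau']$ by comparing neighbourhoods of $0$. Given a $\rho$-neighbourhood $W$ of $0$, continuity of addition lets me pick $\rho$-neighbourhoods $W_i$ of $0$ with $\sum_{i=1}^n W_i\subseteq W$ for every $n$ (choose symmetric $V_i$ with $V_i+V_i\subseteq V_{i-1}$, $V_0=W$, set $W_i:=V_i$, and telescope). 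Now I invoke local boundedness: fix a bounded $\tau$-neighbourhood $U$ of $0$. Each $iU$ is $\tau$-bounded, so $\rho$ and $\tau'$ coincide on $iU$; thus $W_i\cap iU$, being $\rho|iU$-open around $0$, is $\tau'|iU$-open, giving a $\tau'$-neighbourhood $U'_i$ of $0$ with $U'_i\cap iU\subseteq W_i$. Then $\gamma\bigl((U'_n)_{n\in\N},U\bigr)\subseteq W$, because any element of $\sum_{i=1}^n(U'_i\cap iU)$ is a sum $\sum_{i=1}^n b_i$ with $b_i\in U'_i\cap iU\subseteq W_i$, hence lies in $\sum_{i=1}^n W_i\subseteq W$. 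So $W$ is a $\gamma[\tau,\tau']$-neighbourhood of $0$, whence $\rho\subseteq\gamma[\tau,\tau']$.

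I expect the main obstacle to be part (ii), and specifically the interplay between local boundedness and the single dilated neighbourhood $U$ in the definition of the mixed topology: the construction works precisely because one fixed bounded $U$ produces, through its dilates $iU$, bounded sets on which $\rho$ and $\tau'$ agree, and these match the $iU$ occurring inside $\gamma\bigl((U'_n)_{n\in\N},U\bigr)$. The telescoping choice of the $W_i$ and the slotwise bookkeeping of the finite sums are the routine-but-delicate steps; a minor technical point in (i) is the reduction to a balanced $U$, so that $\tau$-boundedness of $B$ yields $B\subseteq NU$ for an \emph{integer} $N$ matching an index in the base.
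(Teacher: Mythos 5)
Your proof is correct. Note that the paper itself gives no argument for this proposition at all --- it is quoted from Wiweger \cite[2.2.1, 2.2.2]{Wiweger} --- so your blind proposal is in effect a self-contained reconstruction of the cited result, and it holds up. The two key steps both check out: in (i), after the legitimate reduction to a balanced $U$ (which only shrinks the basic neighbourhood $\gamma\bigl((U'_n)_{n\in\N},U\bigr)$, so proving the inclusion for the smaller set suffices), the trick of placing $b\in\bigl(\bigcap_{i=1}^N U'_i\bigr)\cap(Z-z)\subseteq U'_N\cap NU$ into the $N$-th summand and $0$ into the remaining slots indeed lands $b$ in $\sum_{i=1}^N(U'_i\cap iU)$; in (ii), the telescoping choice $V_i+V_i\subseteq V_{i-1}$ gives $\sum_{i=1}^n W_i\subseteq W$ simultaneously for all $n$, and local boundedness enters exactly where you say it does: the dilates $iU$ of one fixed bounded neighbourhood $U$ are themselves $\tau$-bounded, so the hypothesis $\rho|iU=\tau'|iU$ produces the $U'_i$ with $U'_i\cap iU\subseteq W_i$, matching slot-by-slot the sets $U'_i\cap iU$ in the definition of the base. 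Since both $\rho$ and $\gamma[\tau,\tau']$ are translation invariant, comparing neighbourhood filters at $0$ does give $\rho\subseteq\gamma[\tau,\tau']$. What your argument buys, compared with the paper's bare citation, is an explicit demonstration that the whole proposition is elementary bookkeeping with the neighbourhood base $\bigcup_{n\in\N}\sum_{i=1}^n(U'_i\cap iU)$, and it makes visible why local boundedness is needed in (ii) (a single $U$ whose dilates exhaust the relevant bounded sets) but not in (i).
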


\propref{W2.2} {\rm(ii)} implies that $\gamma[\tau,\tau_1]=\gamma[\tau,\tau_2]$ when $(X,\tau)$ is locally bounded and $\tau_1$ and $\tau_2$ are Hausdorff linear topologies on $X$ such that $\tau_1|Z=\tau_2|Z$ for every $\tau$-bounded subset $Z$; in particular \[\gamma[\tau,\tau']=\gamma\bigl[\tau,\gamma[\tau,\tau']\bigr].\]

\begin{prop}\label{W2.4}\cite[2.4.1]{Wiweger}   If $\Vert\cdot\Vert$ is a norm on $X$ inducing $\tau$ and the unit ball of $(X,\Vert\cdot\Vert)$ is $\tau'$-closed then a set $A\subseteq X$ is $\gamma[\tau,\tau']$-bounded if and only if it is simultaneously $\Vert\cdot\Vert$-bounded and $\tau'$-bounded.
\end{prop}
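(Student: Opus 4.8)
The plan is to prove the two directions of the equivalence separately, exploiting the defining basis $\gamma\bigl((U'_n)_{n\in\N},U\bigr)=\bigcup_{n\in\N}\sum_{i=1}^n(U'_i\cap iU)$ of $0$-neighbourhoods. The forward implication ($\gamma[\tau,\tau']$-bounded $\Rightarrow$ $\|\cdot\|$-bounded and $\tau'$-bounded) should be the easy half, since by Proposition~\ref{W2.1}(i) we have $\tau'\subseteq\gamma[\tau,\tau']$, so every $\gamma[\tau,\tau']$-bounded set is automatically $\tau'$-bounded; and $\|\cdot\|$-boundedness should follow because the mixed topology is, by construction, at least as fine as $\tau=\tau_{\|\cdot\|}$ on the relevant scale. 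Concretely I would check that $\|\cdot\|$-unboundedness of $A$ forces $A$ to be $\gamma[\tau,\tau']$-unbounded by producing a basic neighbourhood that no dilate $\lambda^{-1}A$ can be absorbed into.

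For the substantive direction, suppose $A$ is simultaneously $\|\cdot\|$-bounded and $\tau'$-bounded; I want to show $A$ is absorbed by every basic $0$-neighbourhood $V:=\gamma\bigl((U'_n)_{n\in\N},U\bigr)$. First I would use $\|\cdot\|$-boundedness to fix an integer $N$ with $A\subseteq N\,B$, where $B$ is the $\|\cdot\|$-unit ball; then $\tfrac{1}{N}A\subseteq B\subseteq NU$ after normalising $U$ suitably, so the tail indices $i\ge N$ behave well. The point of the index truncation in the definition of $\gamma$ is exactly that once $\|x\|$ is controlled, $x$ eventually lands in $iU$ for all large $i$. Next, $\tau'$-boundedness lets me absorb $A$ into a scalar multiple of each $U'_i$ for the finitely many small indices $i<N$; combining, a single scalar $\lambda$ can be chosen so that $\lambda^{-1}A\subseteq\sum_{i=1}^{N}(U'_i\cap iU)\subseteq V$.

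The key step, and where the hypothesis that the unit ball of $(X,\|\cdot\|)$ is $\tau'$-closed enters, is the bookkeeping that lets these two absorptions be performed \emph{simultaneously} by one dilation factor. The $\tau'$-closedness of $B$ is what guarantees that the $\tau'$-neighbourhoods $U'_i$ interact correctly with the norm-ball scale $iU$, so that membership in $U'_i\cap iU$ can be arranged at once rather than trading off one condition against the other. I expect the main obstacle to be precisely this coordination: ensuring that the scalar absorbing $A$ into the finitely many sets $U'_i\cap iU$ does not destroy the norm control that placed $A$ inside the truncation at level $N$ in the first place. I would resolve this by choosing $N$ large relative to the norm bound \emph{first}, so that the norm condition $x\in iU$ is met for all $i\ge N$ automatically, and only then absorbing into the finitely many $U'_i$; the $\tau'$-closedness of $B$ ensures the two scales are compatible and the argument closes.
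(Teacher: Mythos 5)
You have the two halves of this equivalence exactly backwards, both in difficulty and in where the hypothesis that the unit ball $B$ of $(X,\Vert\cdot\Vert)$ is $\tau'$-closed enters. (Note also that the paper does not prove this statement at all --- it quotes it from Wiweger --- so your argument has to stand on its own.) The direction you call substantive, namely that $\Vert\cdot\Vert$-boundedness plus $\tau'$-boundedness imply $\gamma[\tau,\tau']$-boundedness, is the routine one and uses no closedness whatsoever: given a basic neighbourhood $W=\gamma\bigl((U'_n)_{n\in\N},U\bigr)$ with all sets taken balanced, choose $m\in\N$ with $A\subseteq mU$ (the $\tau$-neighbourhood $U$ absorbs the norm-bounded set $A$), then $\mu\ge 1$ with $A\subseteq \mu U'_m$; since $0\in U'_i\cap iU$ for every $i$, one has $U'_m\cap mU\subseteq\sum_{i=1}^m(U'_i\cap iU)\subseteq W$, and hence $A\subseteq \mu U'_m\cap \mu mU=\mu(U'_m\cap mU)\subseteq\mu W$. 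There is no ``coordination'' problem between the two absorptions, and your claim that the $\tau'$-closedness of $B$ is what makes them compatible does not correspond to any actual step of the argument.

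The genuine content is in the direction you dismiss as easy, and your justification for it --- that the mixed topology is ``at least as fine as $\tau$'' --- is false: by Proposition~\ref{W2.1}{\rm(ii)}, $\gamma[\tau,\tau']$ is \emph{coarser} than $\tau$ whenever $\tau'\subseteq\tau$, and in general there is no inclusion $\tau\subseteq\gamma[\tau,\tau']$; a coarser topology has more bounded sets, so $\gamma$-boundedness does not formally imply norm-boundedness. This implication is exactly where the closed ball is needed, and it requires a construction which your proposal never supplies. One correct route: if $A$ is not norm-bounded, pick $a_j\in A$ with $\Vert a_j\Vert>j^2(j+1)$; since each ball $rB$ is $\tau'$-closed, for every $j$ there is a balanced $\tau'$-neighbourhood $V_j$ of $0$ with $\tfrac1j a_j\notin\tfrac{j(j+1)}{2}B+V_j$. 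Choose balanced $\tau'$-neighbourhoods $U'_i\subseteq V_i$ recursively so that $U'_{i+1}+U'_{i+1}\subseteq U'_i$, and put $W:=\gamma\bigl((U'_i)_{i\in\N},B\bigr)$. Writing $S_k:=\sum_{i=1}^k(U'_i\cap iB)\subseteq\tfrac{k(k+1)}{2}B$, the nesting condition gives $\sum_{i=k+1}^n U'_i\subseteq U'_k$ for all $n>k$, hence $W\subseteq S_k+U'_k\subseteq\tfrac{k(k+1)}{2}B+U'_k$ for every $k$; consequently $a_j\notin jW$ for every $j$, and since $W$ is balanced no multiple of $W$ contains $A$, i.e.\ $A$ is not $\gamma[\tau,\tau']$-bounded. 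Without such a construction, and without correcting which direction actually uses the hypothesis, your proposal does not prove the proposition.
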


The following two theorems will be of great use in Section \ref{section3} 
 
\begin{thm}\label{2.5}
Assume that  
\begin{enumerate}[{\rm(i)}]
\item $\tau$ is induced by a norm $\Vert\cdot\Vert$ on $X$,
\item the unit ball $X^1$ of $(X,\Vert\cdot\Vert)$ is $\tau'$-closed, but not $\tau'$-compact  and
\item $\tau'|X^1$ is metrizable and strictly coarser than $\tau|X^1$.
\end{enumerate} 
Then $\bigl(X,\gamma [\tau,\tau']\bigr)$ is not a sequential space.
\end{thm}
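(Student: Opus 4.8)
The plan is to reduce the statement to a question about sequences and bounded sections, and then to exhibit a set that is sequentially $\gamma[\tau,\tau']$-closed but not $\gamma[\tau,\tau']$-closed.

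First I would determine the $\gamma[\tau,\tau']$-convergent sequences. If $x_n\to x$ in $\gamma[\tau,\tau']$ then $\{x_n\}$ is $\gamma[\tau,\tau']$-bounded, so by \propref{W2.4} (using that $X^1$ is $\tau'$-closed, hypothesis (ii)) it is $\Vert\cdot\Vert$-bounded, and since $\tau'\subseteq\gamma[\tau,\tau']$ by \propref{W2.1}(i) we also get $x_n\to x$ in $\tau'$. Conversely, a sequence lying in some $rX^1$ and $\tau'$-converging to $x$ has its limit in $rX^1$ (as $rX^1$ is $\tau'$-closed), and on $rX^1$ one has $\gamma[\tau,\tau']|rX^1=\tau'|rX^1$ by \propref{W2.2}(i); hence it $\gamma[\tau,\tau']$-converges to $x$. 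Thus a sequence $\gamma[\tau,\tau']$-converges if and only if it is norm-bounded and $\tau'$-convergent. Combining this with the metrizability of $\tau'|X^1$ (so that on each $rX^1$ sequential $\tau'$-closedness coincides with $\tau'$-closedness) yields the reformulation: a set $A$ is sequentially $\gamma[\tau,\tau']$-closed if and only if $A\cap rX^1$ is $\tau'$-closed for every $r>0$. Consequently it suffices to produce a set $A$ all of whose bounded sections $A\cap rX^1$ are $\tau'$-closed but which is not $\gamma[\tau,\tau']$-closed.

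Next I would extract two sequences encoding the two remaining hypotheses. Since $\tau'|X^1$ is metrizable and strictly coarser than $\tau|X^1$ (hypothesis (iii)), there is a norm-bounded sequence $(z_j)$ with $z_j\to 0$ in $\tau'$ but $\Vert z_j\Vert\ge\delta'>0$ for all $j$ (take a $\tau'$-convergent but not norm-convergent sequence in $X^1$, subtract its limit, and pass to a subsequence). Since $(X^1,\tau')$ is metrizable and, by hypothesis (ii), not compact, it is not sequentially compact, so there is a sequence $(w_k)\subseteq X^1$ with \emph{no} $\tau'$-convergent subsequence; as no subsequence can $\tau'$-converge to $0$, after discarding finitely many terms we may assume $\Vert w_k\Vert\ge\delta>0$ and that $(w_k)$ stays outside a fixed $\tau'$-neighbourhood of $0$. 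The sequence $(z_j)$ is meant to supply $\tau'$-small vectors of non-negligible norm, while $(w_k)$ is meant to keep bounded sections from accumulating.

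I would then construct an unbounded, doubly-indexed set $A$ whose members sit at norm-levels $n\to\infty$, each level being a $\tau'$-closed discrete slice; the guiding shape is
\[
A:=\bigl\{\,n\,z_j+w_j : n\in\N,\ j\ge n\,\bigr\}.
\]
Here $\Vert n\,z_j+w_j\Vert$ is comparable to $n$, so $A\cap rX^1$ meets only finitely many levels; and for fixed $n$ the slice $\{n\,z_j+w_j:j\ge n\}$ has no $\tau'$-convergent subsequence, because $n\,z_j\to 0$ in $\tau'$ while $(w_j)$ has none. Hence each slice is $\tau'$-closed and discrete, $A\cap rX^1$ is a finite union of $\tau'$-closed sets, and by the reformulation $A$ is sequentially $\gamma[\tau,\tau']$-closed, with $0\notin A$. (I flag already that this simplest choice may have to be upgraded to a genuine staircase/diagonal family; see below.)

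The remaining, and main, obstacle is to show $0\in\overline{A}^{\,\gamma[\tau,\tau']}$. Given a basic neighbourhood
\[
\gamma\bigl((U'_i)_{i\in\N},U\bigr)=\bigcup_{N\in\N}\sum_{i=1}^{N}\bigl(U'_i\cap iU\bigr)
\]
of $0$, with $U=\rho X^1$, one must place some $a\in A$ inside it, i.e. write $a=\sum_{i=1}^N c_i$ with $c_i\in U'_i$ and $\Vert c_i\Vert\le i\rho$. The mechanism is that the norm budget $\sum_{i=1}^N i\rho=\rho N(N+1)/2$ absorbs a large norm $\approx n$, while strict coarseness is precisely what lets a \emph{high-norm} element decompose into pieces that are \emph{simultaneously} norm-controlled and $\tau'$-small: taking $j$ very large makes the $z_j$-components $\tau'$-tiny, so each $c_i$ can be forced into the prescribed $U'_i$. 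This is where the interplay of the hypotheses is essential, and it is the delicate point. The splitting must succeed against an \emph{arbitrary} defining sequence $(U'_i)$ (whose members may shrink arbitrarily fast), which forces the indices $j$ to be chosen diagonally, faster than the $U'_i$ contract; and the non-vanishing, $\tau'$-non-small $w_j$-component is the hardest term to absorb, since it cannot be made $\tau'$-small. Reconciling ``$0$ reachable through unbounded elements'' (needed for non-closedness) with ``no bounded slice accumulates'' (needed for sequential closedness) is exactly the crux, and is what dictates the precise, possibly staircase-refined, form of $A$. Once $0\in\overline{A}^{\,\gamma[\tau,\tau']}\setminus A$ is established, $A$ is sequentially closed but not closed, so $\bigl(X,\gamma[\tau,\tau']\bigr)$ is not a sequential space.
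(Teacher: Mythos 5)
Your reduction is sound, and it is actually a nice systematization of what the paper does by hand: using \propref{W2.4} (which needs the $\tau'$-closedness of $X^1$) and \propref{W2.2}(i), a sequence is $\gamma[\tau,\tau']$-convergent if and only if it is norm-bounded and $\tau'$-convergent, and hence a set is sequentially $\gamma[\tau,\tau']$-closed if and only if every bounded section $A\cap rX^1$ is $\tau'$-closed. The genuine gap is in the witness set itself. In $A=\{nz_j+w_j : j\ge n\}$ you attached the \emph{growing} coefficient to the $\tau'$-null sequence $(z_j)$ and the coefficient $1$ to the non-sequentially-compact sequence $(w_j)$. With these roles the crux step $0\in\overline{A}$ (closure in $\gamma[\tau,\tau']$) is not just unproved --- it is false in general, so no diagonal or staircase refinement of this shape can rescue it.

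Concretely, take $X=c_0$, $\tau$ the sup-norm topology, $\tau'$ the topology of coordinatewise convergence; hypotheses (i)--(iii) hold. The natural choices are $z_j=e_j$ and $w_j=\sum_{i=1}^{j}e_i$ (no $\tau'$-convergent subsequence in $c_0$, since every pointwise limit is the constant-one sequence, which is not in $c_0$). Then every element of $A$ has first coordinate $\ge 1$, while the coordinate functional $x\mapsto x(1)$ is $\tau'$-continuous and hence $\gamma[\tau,\tau']$-continuous; thus $\{x : |x(1)|<1\}$ is a $\gamma[\tau,\tau']$-neighbourhood of $0$ disjoint from $A$ (in fact a short argument shows this $A$ is even $\tau'$-closed). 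This is precisely the difficulty you flagged: a term that is bounded away from $0$ in $\tau'$ and enters with a non-decaying coefficient can be detected by a single $\tau'$-continuous functional, and no ``norm budget'' can absorb it, because the adversary may take every $U_i'$ inside $\{x:|x(1)|<2^{-i}\}$, making every admissible finite sum have first coordinate of modulus less than $1$. The repair is to swap the roles: the paper's set is $F=\bigl\{\frac1m a_n+mb_n : n,m\in\N,\ m\ge m_0\bigr\}$, i.e. your $(w_j)$ appears \emph{scaled down} as $\frac1m a_n$ and your $(z_j)$ appears \emph{scaled up} as $mb_n$ with $\Vert b_n\Vert>1/m_0$. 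Hypothesis (iii) then does exactly the work you were missing: since $\tau'|X^1$ is coarser than $\tau|X^1$, the piece $\frac1m a_n$ is norm-small and therefore automatically $\tau'$-small (it fits into $U_1'\cap U$ for $m$ large), while $mb_n$ has large norm but is $\tau'$-small for $n$ large (it fits into $U_l'\cap lU$ for a suitable single index $l$), so $0$ lies in the closure of $F$; and since multiplying $(a_n)$ by the fixed scalar $\frac1m$ preserves the absence of $\tau'$-cluster points, the bounded sections of $F$ are still $\tau'$-closed, so your reduction applies verbatim to give sequential closedness.
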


\begin{proof}
$X^1$ contains by (ii) a sequence $(a_n)_{n\in\N}$ without $\tau'$-cluster point. By (iii) there is an integer  $m_0>1$ and a sequence $(b_n)_{n\in\N}$ in $X^1$ converging to $0$ w.r.t. $\tau'$ such that $\|b_n\| > 1/m_0$ for $n\in\N$.

We will show that
\[ F:=\left\{\frac{1}{m}a_n+m b_n:n,m\in\N,m\ge m_0\right\}.\]
is sequentially closed, but not closed in $\bigl(X,\gamma [\tau,\tau']\bigr)$.

$0\notin F$ since $\|\frac{1}{m}a_n\|<1 < \|m b_n\|$ for $n,m\in\N$ with $m\geq m_0$.
We show that on the other hand $0$ is a  $\gamma [\tau,\tau']$-limit point of $F$.  Let  $W:=\gamma\bigl((U_k')_{k\in\N},U\bigr)$ be a $0$-neighbourhood in  $\gamma[\tau,\tau']$ where $U_k'$ and $U$ are $0$-neighbourhoods in $\tau'$ and $\tau$, respectively.  Since $B:=\bigcup_{n\in\N}\{a_n,b_n\}\subseteq X^1$ and $\tau'\subseteq\tau$, we have $\frac{1}{m}B\subseteq U_1'\cap U$ for some $m\ge m_0$.  Then $\frac{1}{m}a_n\in U_1'\cap U$ for every $n\in\N$.  Now, let $l\in\N$ such that $\frac{m}{l}B\subseteq U$.  Then $mb_n\in lU$ for every $n\in\N$.  Since $(mb_n)_{n\in\N}$ converges to $0$ w.r.t. $\tau'$, there exists $n_0\in\N$ such that $mb_{n_0}\in U'_l$.  Then $\frac{1}{m}a_{n_0}+mb_{n_0}\in U_1'\cap U\,+\,U_l'\cap lU\subseteq W$

We now show that $F$ is sequentially closed  w.r.t. $\gamma[\tau,\tau']$. 
 Let $(g_j)_{j\in\N}$ be a sequence in $F$ converging to $g$ w.r.t. $\gamma[\tau,\tau']$.  We can write $g_j=\frac{1}{m_j}a_{n_j}+m_jb_{n_j}$ where $m_j,n_j\in\N$ and $m_j\ge m_0$.  The set $\{g_j:j\in\N\}$  is $\gamma[\tau,\tau']$-bounded and therefore $\tau$-bounded in virtue of Proposition \ref{W2.4}.  Hence $\{m_j b_{n_j}:j\in\N\}$ is $\tau$-bounded. But since $\|b_n\|\geq 1/m_0$ for all ${n\in\N}$ this can only happen if $\{m_j:j\in\N\}$ is finite.  Thus, passing to a subsequence, we may assume that $m_j$ is constant $(=m)$, i.e. $g_{n_j}=\frac{1}{m}a_{n_j}+m b_{n_j}$.  Suppose that $\{n_j:j\in\N\}$ is not finite.  Passing to a subsequence we may assume that $n_j$  are strictly increasing.  Since $b_{n_j}\xrightarrow{\tau'} 0$ and $\tau'\subseteq \gamma[\tau,\tau']$ we deduce that
\[a_{n_j}=mg_{n_j}-m^2 b_{n_j}\xrightarrow{\tau'}mg\quad\text{as }j\to\infty,\]
in contradiction to the fact that $(a_n)_{n\in\N}$ has no $\tau'$-cluster point.  Therefore $\{n_j:j\in\N\}$ is finite.  But this implies that $g$ belongs to $F$.
\end{proof}

\begin{thm}\label{2.6} Let $\tau'$ be induced by a pointwise bounded family $\{\rho_\lambda:\lambda\in\Lambda\}$ of seminorms on $X$  and $\tau$ be the topology induced by the norm
\[\Vert x\Vert:=\sup_{\lambda\in\Lambda}\rho_\lambda(x).\]
Assume further that the unit ball $X^1$ of $(X,\Vert\cdot\Vert)$ is $\tau'$-compact.

Then a subset $C$ of $X$ is $\gamma[\tau,\tau']$-closed if and only if $C\cap\,rX^1$ is $\gamma[\tau,\tau']$-closed for every $r>0$.
\end{thm}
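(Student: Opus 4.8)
The plan is to prove the nontrivial inclusion---that $C\cap rX^1$ being $\gamma[\tau,\tau']$-closed for all $r>0$ forces $C$ to be $\gamma[\tau,\tau']$-closed---by directly constructing, for each point outside $C$, a basic $\gamma[\tau,\tau']$-neighbourhood missing $C$. The converse is immediate once one observes that each $rX^1$ is itself $\gamma[\tau,\tau']$-closed: since $X^1$ is $\tau'$-compact it is $\tau'$-closed, and as $\tau'\subseteq\gamma[\tau,\tau']$ by Proposition \ref{W2.1}(i), every $rX^1$ is $\gamma[\tau,\tau']$-closed, so $C$ closed trivially gives $C\cap rX^1$ closed. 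For the forward direction I would first reduce to producing a $\gamma[\tau,\tau']$-neighbourhood of $0$ disjoint from $C$ whenever $0\notin C$. Given an arbitrary $x_0\notin C$, the translate $C-x_0$ again satisfies the ball-wise hypothesis, because $(C-x_0)\cap rX^1=\bigl(C\cap(rX^1+x_0)\bigr)-x_0$, and with $s:=r+\|x_0\|$ one has $rX^1+x_0\subseteq sX^1$, exhibiting $C\cap(rX^1+x_0)=(C\cap sX^1)\cap(rX^1+x_0)$ as an intersection of two $\gamma[\tau,\tau']$-closed sets; translating a neighbourhood of $0$ then yields the required neighbourhood of $x_0$.

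So assume $0\notin C$ and set $C_r:=C\cap rX^1$. By Proposition \ref{W2.2}(i) the topologies $\gamma[\tau,\tau']$ and $\tau'$ agree on the $\tau$-bounded set $rX^1$, so the hypothesis makes each $C_r$ a $\tau'$-closed subset of the $\tau'$-compact set $rX^1$, hence $\tau'$-compact, and $0\notin C_r$. Taking $U$ to be the open unit ball of $(X,\|\cdot\|)$, I would build balanced $\tau'$-neighbourhoods $U'_n$ of $0$ so that the basic neighbourhood $W:=\gamma\bigl((U'_n)_{n\in\N},U\bigr)=\bigcup_n T_n$, where $T_n:=\sum_{i=1}^n(U'_i\cap iU)$, is disjoint from $C$. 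The decisive bookkeeping observation is that the norm truncations confine each partial sum: every element of $T_n$ has norm $<R_n:=\sum_{i=1}^n i=n(n+1)/2$, so $T_n\subseteq R_nX^1$ and therefore $T_n\cap C=T_n\cap C_{R_n}$. Consequently it suffices to arrange $T_n\cap C_{R_n}=\emptyset$ for every $n$, since any $w\in W\cap C$ lies in some $T_n$ and hence in $C_{R_n}$.

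The $U'_n$ are chosen by induction, and the place where compactness must be used is exactly the inductive step, where the partial sum $T_{n-1}$ (already disjoint from $C$) has to be kept away from the larger compact set $C_{R_n}$. To make the step go through I would strengthen the inductive hypothesis to the closure statement that the $\tau'$-closure $\overline{T_n}$ satisfies $\overline{T_n}\cap C=\emptyset$; since $T_n\subseteq R_nX^1$ and $R_nX^1$ is $\tau'$-closed, this equals $\overline{T_n}\cap C_{R_n}=\emptyset$. Granting $\overline{T_{n-1}}\cap C_{R_n}=\emptyset$, I have a $\tau'$-compact set $C_{R_n}$ disjoint from the $\tau'$-closed set $\overline{T_{n-1}}$ in the Hausdorff topological vector space $(X,\tau')$, so the standard separation lemma yields a balanced $\tau'$-neighbourhood $V$ of $0$ with $(C_{R_n}+V+V)\cap\overline{T_{n-1}}=\emptyset$. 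Setting $U'_n:=V$ gives $T_n\subseteq\overline{T_{n-1}}+V$, whence $\overline{T_n}\subseteq\overline{T_{n-1}}+V+V$; any $x\in\overline{T_n}\cap C_{R_n}$ would then satisfy $x=t+v$ with $t\in\overline{T_{n-1}}$ and $v\in V+V$, forcing $t\in(C_{R_n}+V+V)\cap\overline{T_{n-1}}=\emptyset$, a contradiction. The base case $n=1$ is simply the separation of the compact set $C_1$ from $0$. With all $U'_n$ in hand, $T_n\cap C=\emptyset$ for every $n$, so $W\cap C=\emptyset$, which is exactly what the reduction required.

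The main obstacle, and the reason the hypotheses are needed, is the tension between the growth of the partial sums $T_n$ and the growth of the compact sets $C_{R_n}$: a $\gamma[\tau,\tau']$-neighbourhood of $0$ is necessarily norm-unbounded, so without further control $T_{n-1}$ could already meet $C_{R_n}$ and no later choice of $U'_n$ could repair this. Both features of the statement defuse the difficulty: the truncation $\cap\,iU$ pins $T_{n-1}$ inside the ball $R_{n-1}X^1$, where avoiding $C$ is the same as avoiding $C_{R_{n-1}}$, and the $\tau'$-compactness of the balls turns the ball-wise hypothesis into genuine compactness of each $C_r$, which is precisely what powers the compact--closed separation at every inductive step.
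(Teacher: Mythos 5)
Your proof is correct, and it takes a genuinely different route from the paper's. After the same translation step (which you justify more explicitly than the paper: the translated set satisfies the ball-wise hypothesis because $rX^1+x_0$ is $\tau'$-compact, hence $\gamma[\tau,\tau']$-closed), the paper reduces everything to its Lemma~\ref{l2}: it invokes Wiweger's description \cite[Theorem 3.1.1]{Wiweger} of a $0$-neighbourhood base for $\gamma[\tau,\tau']$ consisting of the sets $\{x\in X:\rho_{\lambda_n}(x)\le a_n\ \text{for all}\ n\}$ with $0<a_n\uparrow+\infty$, and manufactures such a set disjoint from $C-x_0$ by an induction over finite subsets of $\Lambda$, where $\tau'$-compactness enters through a subnet argument for a net indexed by the finite subsets of $\Lambda$ (Lemma~\ref{l1}). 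You instead work with the \emph{definitional} base of the mixed topology, the sets $\gamma\bigl((U_n')_{n\in\N},U\bigr)=\bigcup_n\sum_{i=1}^n(U_i'\cap iU)$, and choose the $U_n'$ inductively via the standard compact--closed separation theorem in the Hausdorff topological vector space $(X,\tau')$; the truncations $\cap\,iU$ confine the partial sums $T_n$ to the balls $R_nX^1$, and your strengthened inductive hypothesis on $\tau'$-closures ($\overline{T_n}\cap C=\emptyset$, legitimate since $\overline{A}\subseteq A+V$ for every $0$-neighbourhood $V$ in a linear topology) plays the structural role that Lemma~\ref{l1} plays in the paper. Both arguments rest on the same preliminary observation that each $C\cap rX^1$ is $\tau'$-compact, via Proposition~\ref{W2.2}(i). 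What your approach buys: it never uses the seminorm representation of $\tau'$ at all --- only that $\tau$ is a norm topology, $\tau'$ a Hausdorff linear topology, and $X^1$ is $\tau'$-compact --- so it actually proves a more general statement and is self-contained modulo standard TVS facts, avoiding the citation of Wiweger's Theorem 3.1.1. What the paper's approach buys: the separating neighbourhood comes out in the concrete form of countably many seminorm constraints, staying entirely within the seminorm framework of the section that is also exploited in Corollary~\ref{2.8}.
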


\begin{proof}
Let $C$ be $\gamma[\tau,\tau']$-closed and $r>0$. Since $rX^1$ is $\tau'$-compact, it is $\tau'$-closed, hence $\gamma[\tau,\tau']$-closed since $\tau'\subseteq\gamma[\tau,\tau']$. Therefore  $C\cap\,rX^1$ is $\gamma[\tau,\tau']$-closed.

The proof of the reverse implication is based on the following two lemmas. We use therein the notation 
$$B(f):=\{x\in X: \rho_\lambda(x)\leq f(\lambda) \text{ for all } \lambda\in\Lambda\} \text{  if  } f:\Lambda\rightarrow (0,+\infty].$$
Moreover, since the seminorms $\rho_\lambda$ are not assumed to be different, we may assume that $\Lambda$ is infinite. 

\begin{lem}\label{l1}
Let  $f:\Lambda\rightarrow (0,+\infty)$ and $\sup_{\lambda\in\Lambda}f(\lambda)\leq s<+\infty$.  Assume that  $A\subseteq X$ such that $A\cap B(f)=\emptyset$ and $A\cap sX^1$ is $\tau'$-closed.

Then there exists a finite subset $F$ of $\Lambda$  such that $A\cap B(g)=\emptyset$ where $g(\lambda)=f(\lambda)$ for $\lambda\in F$ and
$g(\lambda)=s$ for $\lambda\in \Lambda\setminus F$.
\end{lem}
\begin{proof}
Otherwise for any finite subset $F$ of $\Lambda$  there exists  $x_F\in A$ with $\rho_\lambda(x_F)\leq f(\lambda)$ for $\lambda\in F$ and $\rho_\lambda(x_F)\leq s$ for all $\lambda\in\Lambda$. Since $A\cap sX^1$ is $\tau'$-compact, $(x_F)_{F\subseteq\Lambda,|F|<\infty}$ has a subnet $\tau'$-converging to an element $x\in A\cap sX^1$. Moreover $\rho_\lambda (x_F)\leq f(\lambda)$ eventually. Therefore $\rho_\lambda (x)\leq f(\lambda)$ for all $\lambda\in\Lambda$. It follows $x\in A\cap B(f)$, a contradiction.
\end{proof}

\begin{lem}\label{l2}
Let $A\subseteq X\setminus\{0\}$ be such that  $A\cap rX^1$ is $\tau'$-closed for every $r\in\R$, $r>0$. Then there is a sequence $(\lambda_n)_{n\in\N}$ in $\Lambda$ and a real sequence $(a_n)_{n\in\N}$ with $0<a_n\uparrow +\infty$
such that $A\cap B(g)=\emptyset$ where $g(\lambda_n)=a_n$ for $n\in\N$ and $g(\lambda)=+\infty$ otherwise.
\end{lem}
\begin{proof}
By assumption $A\cap X^1$ is $\tau'$-closed, therefore $\tau$-closed since $\tau'\subseteq\tau$.
Hence there is an $\varepsilon>0$ such that $A\cap \varepsilon X^1=\emptyset$.

Let $F_0:=\emptyset$.  We define inductively a strictly increasing sequence $(F_n)_{n\in\N}$ of finite subsets of $\Lambda$ such that $A\cap B(g_n)=\emptyset$ where $g_n$ is defined by
\begin{align*}
g_n(\lambda)=\begin{cases}
i\varepsilon\qquad &\text{if }\lambda \in F_{i}\setminus F_{i-1} \text{ and } 1\le i\le n,\\
(n+1)\varepsilon\qquad&\text{if }\lambda\in\Lambda\setminus F_n.
\end{cases}
\end{align*}

By the choice of $\varepsilon$, $A\cap B(g_n)=\emptyset$  is satisfied for $n=0$ defining $g_0(\lambda)=\varepsilon$ for all $\lambda\in\Lambda$.

For the inductive step $[n-1\rightarrow n]$ we apply \lemref{l1} with $f:=g_{n-1}$ and $s:=(n+1)\varepsilon$. Choose $F$ according to  \lemref{l1} and let $F_n$ be a finite subset of $\Lambda$ with $F\cup F_{n-1}\subseteq F_n$ and $F_{n-1}\neq F_n$. If we set $g_n(\lambda)=g_{n-1}(\lambda)$ for $\lambda\in F_n$ and $g_n(\lambda)=(n+1)\varepsilon$ for $\lambda\in \Lambda\setminus F_n$ then $A\cap B(g_n)=\emptyset$.

Let $g:=\sup_{n\in\N} g_n$.  Then $g(\lambda)=n\varepsilon$ whenever  there exists $n\in\N$ such that $\lambda\in F_n\setminus F_{n-1}$.  Otherwise $g(\lambda)=+\infty$.  
Since the sequence $(g_n)$ is increasing, $B(g)=\bigcup_{n\in\N} B(g_n)$ and therefore $A\cap B(g)=0$.

To complete the proof  let $k_n:=|F_n|$, choose a sequence $(\lambda_n)_{n\in\N}$ in $\Lambda$ with $F_n=\{\lambda_i: i\leq k_n\}$ and set $a_i:=g(\lambda_i)$.
\end{proof}

To conclude the proof of Theorem \ref{2.6} first we recall that   the sets $\{x\in X: \rho_{\lambda_n}(x)\leq a_n \text{ for all }n\in\mathbb{N}\}$ where $\lambda_n\in\Lambda$ and  $0<a_n\uparrow +\infty$ form a $0$-neighbourhood base of $\bigl(X,\gamma[\tau,\tau']\bigr)$ (see \cite[Theorem 3.1.1]{Wiweger}).  Suppose that $C\subseteq X$ and $C\cap\,rX^1$ is $\gamma[\tau,\tau']$-closed for every $r>0$.  Since  $\gamma[\tau,\tau']$ and $\tau'$ induce on $rX^1$ the same topology (see \propref{W2.2}{\rm(i)}) and since $rX^1$ is $\tau'$-closed it follows that $C\cap rX^1$ is also $\tau'$-closed.   Let $x\notin C$ and $A:=C-x$. Then $0\notin A$. It follows from \lemref{l2}  that $0$ does not belong to the  $\gamma[\tau,\tau']$-closure $\overline{A}$ of $A$, i.e. $x\notin \overline{C}$.
\end{proof}

\begin{cor}\label{2.7}
Under the assumptions of \thmref{2.6} we have: If $\tau''$ is a (not necessarily linear) topology on $X$  such that $\tau''|rX^1=\tau'|rX^1$ for every $r>0$ then $\tau''\subseteq \gamma[\tau,\tau']$.
\end{cor}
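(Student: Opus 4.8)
The plan is to prove the contrapositive at the level of closed sets: I would show that every $\tau''$-closed set $C$ is $\gamma[\tau,\tau']$-closed, since this is exactly the assertion $\tau''\subseteq\gamma[\tau,\tau']$. The whole point is that \thmref{2.6} lets me localize the closedness condition to the balls $rX^1$, and on these balls the hypothesis $\tau''|rX^1=\tau'|rX^1$ gives me direct control.

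Concretely, fix a $\tau''$-closed set $C$ and fix $r>0$. First I would observe that $C\cap rX^1$ is closed in the subspace topology $\tau''|rX^1$, being the intersection of a $\tau''$-closed set with $rX^1$. By the standing hypothesis $\tau''|rX^1=\tau'|rX^1$, so $C\cap rX^1$ is closed in $\tau'|rX^1$ as well. The next step upgrades this relative closedness to genuine $\tau'$-closedness in $X$: since $X^1$ is $\tau'$-compact by the assumption of \thmref{2.6} and scalar multiplication by $r$ is a $\tau'$-homeomorphism, $rX^1$ is $\tau'$-compact, hence $\tau'$-closed in $X$ (recall $\tau'$ is Hausdorff). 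A set that is closed in the relative topology of a $\tau'$-closed subspace is $\tau'$-closed in $X$, so $C\cap rX^1$ is $\tau'$-closed.

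Now I would invoke \propref{W2.1}{\rm(i)}, which gives $\tau'\subseteq\gamma[\tau,\tau']$; hence $C\cap rX^1$ is $\gamma[\tau,\tau']$-closed. As $r>0$ was arbitrary, \thmref{2.6} applies and yields that $C$ itself is $\gamma[\tau,\tau']$-closed. This proves $\tau''\subseteq\gamma[\tau,\tau']$.

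I do not expect a serious obstacle here: the argument is a short chain of reductions resting entirely on \thmref{2.6} and \propref{W2.1}. The only point requiring a little care is the passage from ``closed in the relative topology of $rX^1$'' to ``$\tau'$-closed in $X$'', which is precisely where the $\tau'$-compactness of $X^1$ (through $\tau'$-closedness of $rX^1$) is used; without compactness the relative closedness on each ball would not transfer to closedness in the ambient space, and the reduction furnished by \thmref{2.6} could not be applied.
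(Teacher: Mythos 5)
Your proof is correct and follows essentially the same route as the paper: both reduce, via Theorem~\ref{2.6}, to showing that $C\cap rX^1$ is $\gamma[\tau,\tau']$-closed for each $r>0$, using the $\tau'$-compactness (hence $\tau'$-closedness) of $rX^1$ together with the hypothesis $\tau''|rX^1=\tau'|rX^1$. The only difference is cosmetic: the paper verifies closedness of $C\cap rX^1$ by chasing a $\gamma[\tau,\tau']$-convergent net and finishing with the $\tau''$-closedness of $C$, whereas you upgrade the relative closedness directly to genuine $\tau'$-closedness of $C\cap rX^1$ and then invoke $\tau'\subseteq\gamma[\tau,\tau']$ --- a slightly tidier bookkeeping of the same ingredients.
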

\begin{proof}
Let $C$ be a $\tau''$-closed subset of $X$. In view of \thmref{2.6} it is enough to show that $C\cap rX^1$ is $\gamma[\tau,\tau']$-closed for every $r>0$. Let $(x_\gamma)_{\gamma\in\Gamma}$ be a net in $C\cap rX^1$ converging to $x$ w.r.t. $\gamma[\tau,\tau']$. We show that $x\in C\cap rX^1$. 
First observe that  $(x_\gamma)_{\gamma\in\Gamma}$ converges to $x$ also w.r.t. $\tau'$ since $\tau'\subseteq \gamma[\tau,\tau']$. 
 Moreover, since $rX^1$ is $\tau'$-closed we get $x\in rX^1$.
 Thus, by assumption, $(x_\gamma)_{\gamma\in\Gamma}$ converges to $x$ w.r.t. $\tau''$. Therefore $x\in C$ since $C$ is $\tau''$-closed, and finally $x\in C\cap rX^1$.
\end{proof}

\begin{cor}\label{2.8}
With the assumptions of \thmref{2.6} we have: If $\Lambda$ is countable then $\bigl(X,\gamma[\tau,\tau']\bigr)$ is a sequential space.
\end{cor}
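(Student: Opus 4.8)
The plan is to verify the defining property of a sequential space directly, namely that every sequentially $\gamma[\tau,\tau']$-closed subset $C$ of $X$ is already $\gamma[\tau,\tau']$-closed. The decisive reduction is supplied by \thmref{2.6}: a set $C\subseteq X$ is $\gamma[\tau,\tau']$-closed precisely when $C\cap rX^1$ is $\gamma[\tau,\tau']$-closed for every $r>0$. Thus, assuming $C$ to be sequentially closed, it suffices to prove that each $C\cap rX^1$ is closed.

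First I would record two consequences of the hypotheses restricted to the bounded set $rX^1$. By \propref{W2.2}{\rm(i)}, since $rX^1$ is $\tau$-bounded, the mixed topology and $\tau'$ agree on it, i.e.\ $\gamma[\tau,\tau']|rX^1=\tau'|rX^1$. Moreover $\tau'$ is a Hausdorff linear topology generated by the countable family $\{\rho_\lambda:\lambda\in\Lambda\}$ of seminorms; Hausdorffness forces these seminorms to separate points, so the countable generating family yields a countable neighbourhood base at $0$ and hence $\tau'$ is metrizable. Consequently $rX^1$, carrying the topology $\tau'|rX^1$ (equivalently $\gamma[\tau,\tau']|rX^1$), is a metrizable space, in which closedness and sequential closedness coincide.

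Next I would use that $rX^1$ is $\tau'$-compact, hence $\tau'$-closed, and therefore $\gamma[\tau,\tau']$-closed in $X$ (as $\tau'\subseteq\gamma[\tau,\tau']$ by \propref{W2.1}{\rm(i)}). For a subset of a closed set, closedness in $X$ is equivalent to closedness in the subspace $rX^1$, so it remains only to check that $C\cap rX^1$ is sequentially closed inside the metrizable space $rX^1$. Let $(x_n)_{n\in\N}$ be a sequence in $C\cap rX^1$ converging to some $x\in rX^1$ with respect to $\tau'|rX^1$. Because $\gamma[\tau,\tau']$ and $\tau'$ coincide on $rX^1$ and the whole sequence together with its limit lie in $rX^1$, the sequence converges to $x$ with respect to $\gamma[\tau,\tau']$ in $X$ as well. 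Since $(x_n)\subseteq C$ and $C$ is sequentially $\gamma[\tau,\tau']$-closed, we obtain $x\in C$, whence $x\in C\cap rX^1$. Thus $C\cap rX^1$ is sequentially closed, hence closed, in the metric space $rX^1$, and therefore $\gamma[\tau,\tau']$-closed in $X$. By \thmref{2.6} this yields that $C$ is $\gamma[\tau,\tau']$-closed, completing the argument.

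The heart of the proof, and the only place where the countability of $\Lambda$ enters, is the metrizability of $\tau'$ on the bounded parts $rX^1$; once this is combined with the localization to bounded sets furnished by \thmref{2.6}, the passage from sequential closedness to closedness becomes automatic. I do not expect a serious obstacle beyond marshalling these ingredients correctly; the one subtlety to watch is the distinction between closedness in the subspace $rX^1$ and in the ambient space $X$, which is reconciled by the fact that $rX^1$ is itself $\gamma[\tau,\tau']$-closed.
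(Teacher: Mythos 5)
Your proof is correct and follows essentially the same route as the paper's: reduce to bounded parts via Theorem \ref{2.6}, use Proposition \ref{W2.2}(i) to identify $\gamma[\tau,\tau']$ with $\tau'$ on $rX^1$, and invoke metrizability of $\tau'$ (from the countability of $\Lambda$ and Hausdorffness) to pass from sequential closedness to closedness. The paper phrases the last step in the ambient space while you work inside the subspace $rX^1$, but this is only a difference in bookkeeping, not in substance.
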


\begin{proof}
Let $C$ be a sequential closed subset of $\bigl(X,\gamma[\tau,\tau']\bigr)$ and let $r>0$. Then since  $rX^1$ is $\tau'$-closed (and therefore $\gamma[\tau,\tau']$-closed) $C\cap rX^1$ is sequential closed w.r.t. $\gamma[\tau,\tau']$. But on the $\tau'$-closed subset $rX^1$ the topology $\tau'$ agrees with $\gamma[\tau,\tau']$.  Therefore $C\cap rX^1$ is sequential closed w.r.t. $\tau'$.  By our assumption on $\Lambda$ we get that $C\cap rX^1$ is closed w.r.t. $\tau'$ and therefore $C\cap rX^1$ is $\gamma[\tau,\tau']$-closed. Hence $C$ is $\gamma[\tau,\tau']$-closed by \thmref{2.6}.
\end{proof}

We now give a first application of  Theorem \ref{2.5} and Corollary \ref{2.8}. Let $(X,\Sigma,\mu)$ be a $\sigma$-finite measure space, $\tau_\infty$ the topology of the Banach space $(L^\infty,\Vert\,\Vert_\infty)$ and $\tau_\mu$
the topology of convergence in measure (on sets of finite measure), i.e. the Hausdorff linear topology induced by the family of $F$-seminorms  $\rho_F:L^\infty\ni [f]\mapsto \int_X |f|\wedge\chi_F\,d\mu $ ($F\in\Sigma$ with $\mu(F)<\infty$ ). (See \cite[Proposition 245A, p. 172]{Fremlin}.) If $\mu$ is not purely atomic, then Theorem \ref{2.5} implies that $(L^\infty,\gamma[\tau_\infty,\tau_\mu])$ is not a sequential space. If $\mu$ is purely atomic, then $L^\infty$ can be identified with the sequence space $\ell^\infty$ and $\tau_\mu$ with the topology of pointwise convergence on $\ell^\infty$, which is generated by the seminorms $p_n:\ell^\infty\ni (x_i)\mapsto x_n$ ($n\in\mathbb{N}$). It follows therefore from Corollary \ref{2.8}  that $(L^\infty,\gamma[\tau_\infty,\tau_\mu])$ is a sequential space. Combining these two results with Novak's result \cite[Theorem 5]{Novak}, saying that the mixed topology $\gamma[\tau_\infty,\tau_\mu]$ coincides with the Makey topology $\tau(L^\infty,L^1)$ on $L^\infty$ induced by the dual pairing $(L^\infty,L^1)$, we obtain:

\begin{thm}\label{abelian}
 Let $(X,\Sigma,\mu)$ be a $\sigma$-finite measure space. Then the space $(L^\infty,\tau(L^\infty,L^1))$ is  sequential  if and only if $\mu$ is purely atomic. 
\end{thm}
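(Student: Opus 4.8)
The plan is to transport the question to the mixed topology and then invoke the two sequentiality criteria of the previous section. By Novak's theorem \cite[Theorem 5]{Novak} the Mackey topology $\tau(L^\infty,L^1)$ coincides with $\gamma[\tau_\infty,\tau_\mu]$, so it suffices to decide when $\bigl(L^\infty,\gamma[\tau_\infty,\tau_\mu]\bigr)$ is sequential. I would split the argument according to the measure-theoretic dichotomy, using Corollary~\ref{2.8} (with \thmref{2.6}) for the purely atomic case and \thmref{2.5} for the non-atomic case, thereby matching it to the topological dichotomy ``sequential / not sequential''.

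Suppose first that $\mu$ is purely atomic. Then $L^\infty$ is isometrically order-isomorphic to $\ell^\infty$ and, as noted in the paragraph preceding the theorem, $\tau_\mu$ becomes the topology of pointwise convergence, generated by the countable family of seminorms $\rho_n(x):=|x_n|$, $n\in\N$. Since $\sup_{n}\rho_n(x)=\|x\|_\infty$ and the unit ball of $\ell^\infty$ is $[-1,1]^{\N}$, which is compact in the pointwise topology by Tychonoff's theorem, the hypotheses of \thmref{2.6} hold with the index set $\Lambda=\N$. As $\Lambda$ is countable, Corollary~\ref{2.8} yields that $\bigl(L^\infty,\gamma[\tau_\infty,\tau_\mu]\bigr)$ is sequential, hence so is $\bigl(L^\infty,\tau(L^\infty,L^1)\bigr)$.

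Now suppose $\mu$ is not purely atomic; here the plan is to verify the three hypotheses of \thmref{2.5} with $\tau:=\tau_\infty$ and $\tau':=\tau_\mu$. Hypothesis (i) is immediate. For (ii) I would first check that the unit ball $(L^\infty)^1$ is $\tau_\mu$-closed: if $f_n\in (L^\infty)^1$ and $f_n\to f$ in measure, then along a subsequence $f_n\to f$ a.e.\ on each member of a fixed countable exhaustion of $X$, forcing $|f|\le 1$ a.e. To see $(L^\infty)^1$ is not $\tau_\mu$-compact I would exploit the non-atomic part: choose $E\in\Sigma$ with $0<\mu(E)<\infty$ containing no atoms, transport the Rademacher functions to $E$ (extended by $0$), and observe that the resulting sequence lies in $(L^\infty)^1$ and satisfies $\mu\bigl(\{f_n\neq f_m\}\bigr)\ge\delta>0$ for $n\neq m$; in the measure metric this sequence is uniformly separated and so has no $\tau_\mu$-cluster point. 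For (iii), $\sigma$-finiteness makes $\tau_\mu$ metrizable, a metric being $d(f,g)=\sum_k 2^{-k}(1+\mu(X_k))^{-1}\int_{X_k}|f-g|\wedge 1\,d\mu$ for an exhaustion $X=\bigcup_k X_k$ by sets of finite measure; hence $\tau_\mu|(L^\infty)^1$ is metrizable, and it is strictly coarser than $\tau_\infty|(L^\infty)^1$ since indicators $\chi_{A_n}$ of non-atomic sets $A_n\subseteq E$ with $0<\mu(A_n)\downarrow 0$ converge to $0$ in measure while $\|\chi_{A_n}\|_\infty=1$. With all three hypotheses in hand, \thmref{2.5} gives that $\bigl(L^\infty,\gamma[\tau_\infty,\tau_\mu]\bigr)$, hence $\bigl(L^\infty,\tau(L^\infty,L^1)\bigr)$, is not sequential.

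I expect the only genuinely substantive step to be the non-compactness in (ii): one must manufacture a norm-bounded sequence with no cluster point in measure, and this is precisely where non-atomicity is indispensable, as the Rademacher-type construction keeps the pairwise measure-distances bounded away from $0$, something impossible on a purely atomic space where norm-bounded sets are pointwise (sequentially) relatively compact. The remaining verifications --- closedness of the ball, metrizability via $\sigma$-finiteness, and strict coarseness --- are routine, so that the whole proof reduces to correctly aligning the atomic/non-atomic dichotomy with Corollary~\ref{2.8} versus \thmref{2.5}.
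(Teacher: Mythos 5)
Your proposal is correct and follows essentially the same route as the paper: reduce via Novak's theorem to the mixed topology $\gamma[\tau_\infty,\tau_\mu]$, then apply Corollary~\ref{2.8} (through \thmref{2.6}) in the purely atomic case and \thmref{2.5} in the non-purely-atomic case. The only difference is that you spell out the verification of the hypotheses (closedness and non-compactness of the unit ball, metrizability, strict coarseness), which the paper leaves implicit.
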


This theorem will be generalized in Theorem \ref{3.10} for von Neumann algebras.

\section{The order topology and the sequential order topology on $\saM$}\label{section3}
  
On $M$ we consider the mixed topology $\gamma\bigl[\tau_u,s^\ast(M,M_\ast)\bigr]$ determined by $\tau_u$ and $\sstar$.

\begin{thm}\label{mixed}
The Mackey topology $\tau(M,M_\ast)$ coincides with the mixed topology $\gamma\bigl[\tau_u,s^\ast(M,M_\ast)\bigr]$.
\end{thm}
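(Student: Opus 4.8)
The plan is to prove the two inclusions separately. For $\tau(M,M_\ast)\subseteq\gamma\bigl[\tau_u,\sstar\bigr]$ I would invoke the universal property of the mixed topology recorded in \propref{W2.2}{\rm(ii)} together with Akemann's theorem \eqref{Ake}, while for the reverse inclusion I would verify that $\gamma\bigl[\tau_u,\sstar\bigr]$ is a locally convex topology compatible with the duality $\langle M,M_\ast\rangle$ and then appeal to the Mackey--Arens theorem.

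For the first inclusion, I would note that $(M,\tau_u)$ is locally bounded, being a normed space, and that $\sstar\subseteq\tau_u$ by \eqref{e0}; thus the hypotheses of \propref{W2.2}{\rm(ii)} are met and $\gamma\bigl[\tau_u,\sstar\bigr]$ is the \emph{finest} linear topology on $M$ that agrees with $\sstar$ on every norm-bounded set. Now the Mackey topology $\tau(M,M_\ast)$ is locally convex, hence linear, and by Akemann's theorem \eqref{Ake} it agrees with $\sstar$ on every bounded subset $K$ of $M$. By the quoted maximality it follows at once that $\tau(M,M_\ast)\subseteq\gamma\bigl[\tau_u,\sstar\bigr]$.

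For the reverse inclusion, I would first record that $\gamma\bigl[\tau_u,\sstar\bigr]$ is locally convex by \propref{W2.1}{\rm(iii)}, since both $\tau_u$ and $\sstar$ are. By the Mackey--Arens theorem it then suffices to show that the continuous dual of $\bigl(M,\gamma[\tau_u,\sstar]\bigr)$ is exactly $M_\ast$. The inclusion $M_\ast\subseteq\bigl(M,\gamma[\tau_u,\sstar]\bigr)'$ is immediate from $\sstar\subseteq\gamma\bigl[\tau_u,\sstar\bigr]$ (\propref{W2.1}{\rm(i)}) together with the fact that $(M,\sstar)'=M_\ast$. For the converse, let $\omega$ be a $\gamma\bigl[\tau_u,\sstar\bigr]$-continuous linear functional; by \propref{W2.2}{\rm(i)} its restriction to each ball $rM^1$ is $\sstar$-continuous, and since $rM^1$ is $\sstar$-closed (being convex and $\sigma(M,M_\ast)$-compact) the convex set $\ker\omega\cap rM^1$ is $\sstar$-closed. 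As $\sstar$ is compatible with the duality $\langle M,M_\ast\rangle$, this set is in fact $\sigma(M,M_\ast)$-closed, and the Krein--\v{S}mulian theorem then yields that $\ker\omega$ itself is $\sigma(M,M_\ast)$-closed; hence $\omega\in M_\ast$.

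The point requiring the most care is the identification of the dual of the mixed topology, that is, the passage from ``$s^\ast$-continuous on every ball'' to ``normal''. This is precisely where Akemann's theorem is indispensable for the first inclusion, and where the coincidence of closed convex sets for all topologies compatible with $\langle M,M_\ast\rangle$, combined with Krein--\v{S}mulian, carries the second. Everything else is a formal application of the general properties of mixed topologies assembled in \propref{W2.1} and \propref{W2.2}.
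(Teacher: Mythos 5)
Your proposal is correct and follows the same two-inclusion skeleton as the paper: the inclusion $\tau(M,M_\ast)\subseteq\gamma\bigl[\tau_u,\sstar\bigr]$ is obtained, exactly as in the paper, from the maximality property of \propref{W2.2}{\rm(ii)} together with Akemann's theorem \eqref{Ake}, and in both cases the reverse inclusion reduces to identifying the continuous dual of the mixed topology with $M_\ast$ and appealing to the fact that $\tau(M,M_\ast)$ is the finest locally convex topology compatible with $\langle M,M_\ast\rangle$. The genuine difference is how the dual identification is carried out: the paper simply cites \cite[Corollary 1.8.10, p.~21]{Sakai} to pass from ``$\gamma\bigl[\tau_u,\sstar\bigr]$-continuous'' to ``$\sigma(M,M_\ast)$-continuous'', whereas you reprove this fact from first principles --- the restriction of $\omega$ to each ball $rM^1$ is $\sstar$-continuous by \propref{W2.2}{\rm(i)}, so $\ker\omega\cap rM^1$ is $\sstar$-closed, hence (being convex, with $\sstar$ compatible with the duality by \eqref{e0} and Mackey--Arens) it is $\sigma(M,M_\ast)$-closed, and Krein--\v{S}mulian then gives that $\ker\omega$ is $\sigma(M,M_\ast)$-closed, i.e. $\omega\in M_\ast$. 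This is in substance the standard proof of the result the paper quotes from Sakai, so what your route buys is self-containedness (modulo Krein--\v{S}mulian and Banach--Alaoglu) at the cost of length, while the paper's citation is shorter but opaque. One cosmetic remark: your appeal to \eqref{e0} for $\sstar\subseteq\tau_u$ is not literally contained there, since \eqref{e0} stops at $\tau(M,M_\ast)$; the inclusion is nevertheless immediate because each seminorm $\eta_\psi$ is dominated by a multiple of the operator norm.
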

\begin{proof} Proposition \ref{W2.2}{\rm(ii)} with $\tau:=\tau_u$ and $\tau':=s^\ast(M,M_\ast)$ and Akemann's Theorem (\ref{Ake}) already imply $\tau(M,M_\ast)\subseteq\gamma\bigl[\tau_u,s^\ast(M,M_\ast)\bigr]$.    For the converse observe that if $\varphi$ is a linear functional on $M$ continuous w.r.t.  $\gamma\bigl[\tau_u,s^\ast(M,M_\ast)\bigr]$ then  \cite[Corollary 1.8.10, p. 21]{Sakai} implies that $\varphi$ is $\sigma(M,M_\ast)$-continuous.  Hence the inclusion $\gamma\bigl[\tau_u,s^\ast(M,M_\ast)\bigr]\subseteq \tau(M,M_\ast)$ follows because $\tau(M,M_\ast)$ is the finest locally convex topology on $M$ compatible with the duality $(M,M_\ast)$.
\end{proof}

\begin{rem}Theorem \ref{mixed} is a  generalization of \cite[Theorem 5]{Novak}.   Let $(X,\Sigma,\mu)$ be a localisable measure space.  Then $L^\infty$ is an abelian von Neumann algebra (see \cite[Theorem 243G, p.154]{Fremlin}). 
It is easy to verify that on bounded parts of $L^\infty$ the topology $\tau_\mu$ of convergence in measure
agrees with $s^\ast(L^\infty,L^1)$ $(=s(L^\infty,L^1))$ and therefore $\tau(L^\infty,L^1)=\gamma[\tau_u,\tau_\mu]$ by Theorem \ref{mixed}.  
In \cite[Theorem 5]{Novak} it is shown that when $(X,\Sigma,\mu)$ is $\sigma$-finite then $\tau(L^\infty,L^1)=\gamma[\tau_u,\tau_\mu]$.
\end{rem}

\begin{thm}\label{3.4} Let $(x_n)_{n\in\N}$ be a sequence in $\saM$ and let $x\in\saM$.
\begin{enumerate}[{\rm(i)}]
\item $x_n\xrightarrow{\tau(M,M_\ast)} x\ \Leftrightarrow\ x_n\xrightarrow{ s(M,M_\ast)} x\ \Leftarrow\  x_n\xrightarrow{\tau_{o}(\saM)} x$.
\item When $M$ is $\sigma$-finite then  $x_n\xrightarrow{ s(M,M_\ast)} x\ \Leftrightarrow\  x_n\xrightarrow{\tau_{os}(\saM)} x$.
\end{enumerate}
\end{thm}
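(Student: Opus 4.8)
The plan is to read both parts off the inclusions recorded in \propref{1.7}, the coincidence \eref{e1} of the locally convex topologies on bounded sets, and—for the one substantial direction—the noncommutative Egoroff theorem. Part (i) should be essentially formal. From \propref{1.7} we have $s(M,M_\ast)|\saM\subseteq\tau(M,M_\ast)|\saM\subseteq\tau_o(\saM)$. The first inclusion makes $\tau(M,M_\ast)$ finer than $s(M,M_\ast)$ on $\saM$, so $\tau(M,M_\ast)$-convergence implies $s(M,M_\ast)$-convergence, and the composite inclusion shows $x_n\order{\saM}x\Rightarrow x_n\st{M}x$; these are the two ``$\Rightarrow$'' arrows. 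For the remaining implication $x_n\st{M}x\Rightarrow x_n\xrightarrow{\tau(M,M_\ast)}x$ I would argue by boundedness: a convergent sequence in a locally convex space is topologically bounded, so $\{x_n\}$ is $s(M,M_\ast)$-bounded and hence uniformly norm bounded by the remark following \eref{e0}. Thus $\{x_n\}\cup\{x\}\subseteq r\saM^1$ for some $r>0$, and on the bounded set $r\saM^1$ the topologies $s(M,M_\ast)$ and $\tau(M,M_\ast)$ coincide by \eref{e1}; therefore $s(M,M_\ast)$-convergence of the sequence forces $\tau(M,M_\ast)$-convergence.

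Part (ii) splits the same way. The implication $x_n\sorder{\saM}x\Rightarrow x_n\st{M}x$ is immediate from $s(M,M_\ast)|\saM\subseteq\tau_o(\saM)\subseteq\tau_{os}(\saM)$. The heart of the theorem is the converse $x_n\st{M}x\Rightarrow x_n\sorder{\saM}x$, and here I would use the subsequence criterion \propref{1.1}: it suffices to show that every subsequence of $(x_n)$ has a further subsequence order converging to $x$. Fix a subsequence; translating by $x$ (order convergence and $s(M,M_\ast)$ are translation invariant) we may assume $x=0$, and we may replace the sequence by $(|x_n|)$, since $\ro_\psi(|x_n|)=\ro_\psi(x_n)$ for every $\psi\in M_\ast^+$ gives $|x_n|\st{M}0$ with $0\le|x_n|\le\|x_n\|\mathds 1$ and $\{|x_n|\}$ bounded. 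Because $M$ is $\sigma$-finite it carries a faithful normal state $\phi$, so I would now invoke the noncommutative Egoroff theorem: after passing to a subsequence there is an increasing sequence of projections $p_k\uparrow\mathds 1$ with $\phi(\mathds 1-p_k)\to 0$ and $\||x_n|\,p_k\|\to 0$ as $n\to\infty$ for each fixed $k$.

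The final step converts this almost-uniform control into an order sandwich. For $0\le a\le\mathds 1$ and a projection $p$ the identity $a\le a+(2p-\mathds 1)a(2p-\mathds 1)=2\bigl(pap+(\mathds 1-p)a(\mathds 1-p)\bigr)\le 2\|pap\|\,\mathds 1+2(\mathds 1-p)$, together with $\|pap\|\le\|ap\|$, yields (with $a=|x_n|$, $p=p_k$) the bound $|x_n|\le 2\||x_n|\,p_k\|\,\mathds 1+2(\mathds 1-p_k)$. Choosing a diagonal subsequence $(n_k)$ with $\||x_{n_k}|\,p_k\|\le 2^{-k}$, I get $0\le|x_{n_k}|\le v_k$ where $v_k:=2^{-k+1}\mathds 1+2(\mathds 1-p_k)$. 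Since $p_k$ increases, $v_k$ decreases; its infimum exists by conditional monotone completeness of $\saM$ and is nonnegative, and $\phi(\bigwedge_k v_k)\le\phi(v_k)=2^{-k+1}+2\phi(\mathds 1-p_k)\to 0$ forces $\bigwedge_k v_k=0$ by faithfulness of $\phi$. Hence $-v_k\le x_{n_k}\le v_k$ with $v_k\downarrow 0$, i.e. $x_{n_k}\xrightarrow{o}0$, and \propref{1.1} gives $x_n\sorder{\saM}x$.

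The main obstacle is exactly this last direction. The tempting route—dominating a subsequence by the decreasing tail suprema $\bigvee_{j\ge k}|x_{n_j}|$—fails on two counts: $\saM$ is not Dedekind complete, so such suprema need not exist, and even after replacing the elements by spectral projections, a normal state is \emph{not} subadditive on joins of projections, so the resulting tail joins need not shrink in $\phi$-measure. The noncommutative Egoroff theorem is what circumvents this: it delivers an \emph{increasing} family $p_k\uparrow\mathds 1$, so the complements $\mathds 1-p_k$ decrease, which is precisely what makes the dominating sequence $v_k$ monotone and convergent to $0$.
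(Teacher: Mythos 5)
Your part (i) and the easy direction of part (ii) are correct and match the paper's argument: the two forward arrows come from the inclusions in \propref{1.7}, and the equivalence $x_n\st{M}x\Leftrightarrow x_n\xrightarrow{\tau(M,M_\ast)}x$ comes from boundedness of convergent sequences plus \eref{e1}. Your closing step in (ii) is also sound, and in fact slightly cleaner than the paper's: the identity $a+(2p-\mathds 1)a(2p-\mathds 1)=2\bigl(pap+(\mathds 1-p)a(\mathds 1-p)\bigr)$ gives $|x_n|\le 2\bigl\Vert\,|x_n|\,p_k\bigr\Vert\mathds 1+2(\mathds 1-p_k)$, the dominating sequence $v_k$ is decreasing because the $p_k$ increase, and $\bigwedge_k v_k=0$ follows from faithfulness of $\phi$; the paper gets the same kind of bound from the decomposition $x=xp+px(\mathds 1-p)+(\mathds 1-p)x(\mathds 1-p)$ and a norm estimate.

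The gap is in what you call ``the noncommutative Egoroff theorem''. The theorem the paper cites (Takesaki, Theorem 4.13) says: given $(a_n)$ converging $\sigma$-strongly to $0$, a projection $e$, $\varphi\in M_\ast^+$ and $\e>0$, there exist a \emph{single} projection $e_0\le e$ and a subsequence $(a_{n_k})$ with $\varphi(e-e_0)<\e$ and $\Vert a_{n_k}e_0\Vert<2^{-k-1}$. What you invoke is substantially stronger: one subsequence together with an \emph{increasing exhausting} family $p_k\uparrow\mathds 1$, $\phi(\mathds 1-p_k)\to 0$, such that that single subsequence converges uniformly on every $p_k$. This statement is true, but it is not the cited theorem, and deriving it is precisely the technical core of the paper's proof: one applies Egoroff recursively --- first with $e=\mathds 1$ to get $e_1$ and a subsequence, then to that subsequence with $e=\mathds 1-e_1$ to get $e_2\le\mathds 1-e_1$ and a further subsequence, and so on --- sets $p_k=e_1+\dots+e_k$, and then passes to the diagonal of the nested subsequences, using nestedness to check that the diagonal terms satisfy $\Vert x^{(j)}_je_i\Vert\le 2^{-j-i}$ for $j\ge i$ and hence converge uniformly on each fixed $p_k$, while faithfulness of $\psi$ gives $\mathds 1-p_k\downarrow 0$. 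Your final paragraph shows you understand why the monotonicity of the $p_k$ is essential, but as written the proof assumes this key lemma instead of proving it, so it is incomplete at exactly the step that carries the weight of the theorem.
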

\begin{proof}
{\rm(i)}~First we show that $x_n\xrightarrow{\tau(M,M_\ast)} x\ \Leftrightarrow\ x_n\xrightarrow{ s(M,M_\ast)} x$. One direction follows from (\ref{e0}).  For the other direction note that if $ x_n\xrightarrow{ s(M,M_\ast)} x$ then $(x_n)_{n\in\N}$ is bounded.  Hence $x_n\xrightarrow{\tau(M,M_\ast)} x$ in view of (\ref{e1}).  The implication $x_n\xrightarrow{\tau_{o}(\saM)} x\ \Rightarrow\ x_n\xrightarrow{ s(M,M_\ast)} x$ follows from the fact that $\tau_{o}(\saM)$ is the finest topology that preserves  order convergence.

{\rm(ii)}~In virtue of Proposition \ref{1.1} it suffices to prove that  for every sequence $(x_n)_{n\in\N}$  converging  to $x$ w.r.t. $s(M,M_\ast)$ it is possible to extract a subsequence that order converges to $x$ in $(\saM,\le)$.  By the translation invariance of $\tau_o(\saM)$ we can suppose that $x=0$ and since $(x_n)_{n\in\N}$ is necessarily bounded we can further suppose that $(x_n)_{n\in\N}$ is a sequence in $\saM^1$.  The proof is based on a recursive application of Noncommutative Egoroff's Theorem \cite[Theorem 4.13, p. 85]{Takesaki}:  Let $(a_n)_{n\in\N}$ be a sequence  in a von Neumann algebra $M$ converging to $0$ w.r.t.  $s(M,M_\ast)$.  Then, for every projection $e$ in $M$, for every $\varphi\in M_\ast^+$ and for every $\varepsilon>0$, there exists a projection $e_0\le e$ and  a subsequence $(a_{n_k})_{k\in \N}$ such that $\varphi(e-e_0)<\varepsilon$ and $\Vert a_{n_k}e_0\Vert<2^{-k-1}$.

First we suppose that the sequence $(x_n)_{n\in\N}$ is positive.
Since $M$ is $\sigma$-finite, it  admits a faithful normal state $\psi$. Applying Egoroff's Theorem with $e:=\mathds 1$, $\varphi:=\psi$ and $\varepsilon=2^{-1}$, we obtain a projection $e_1$ and a subsequence $\Bigl(x^{(1)}_k\Bigr)_{k\in\N}$ of $(x_n)_{n\in\N}$ such that $\Bigl\Vert x^{(1)}_ke_1\Bigr\Vert<2^{-k-1}$ for each $k\in\N$ and $\psi(\mathds 1-e_1)<2^{-1}$.
The sequence $\Bigl(x^{(1)}_k\Bigr)_{k\in\N}$ converges  to $0$ w.r.t. $s(M,M_\ast)$ and so we can apply Egoroff's Theorem again for this sequence with $e:=\mathds 1-e_1$, $\varphi:=\psi$ and $\varepsilon= 2^{-2}$ to obtain a projection $e_2\le \mathds 1-e_1$, and a subsequence $\Bigl(x^{(2)}_k\Bigr)_{k\in\N}$ of $\Bigl(x^{(1)}_k\Bigr)_{k\in\N}$ such that $\Bigl\Vert x^{(2)}_ke_2\Bigr\Vert<2^{-k-2}$ and $\psi(\mathds 1-e_1-e_2)<2^{-2}$.  

An inductive application of Egoroff's Theorem yields a sequence of orthogonal projections $(e_n)_{n\in\N}$ satisfying $\psi(\mathds 1-\sum_{i=1}^n e_i)<2^{-n}$; and a nested sequence of subsequences $\Bigl(x^{(j)}_k\Bigr)_{k\in\N}$ of $(x_n)_{n\in\N}$ where $\Bigl(x^{(j+1)}_k\Bigr)_{k\in\N}$ is a subsequence of $\Bigl(x^{(j)}_k\Bigr)_{k\in\N}$ such that  $\Bigl\Vert x^{(j)}_ke_j\Bigr\Vert <2^{-k-j}$.

Let $p_n:=\sum_{i=1}^ne_i$.  Then $(\mathds 1-p_n)_{n\in\N}$ is a decreasing sequence of projections and $\psi(\wedge(\mathds 1-p_n))=0$ and thus, since $\psi$ is faithful  $1-p_n\downarrow 0$.

By the way the nested array $\Bigl(x^{(j)}_k\Bigr)_{k\in\N}$ is constructed, one can check that if $j\ge i$ then $x^{(j)}_j=x^{(i)}_p$ for some $p\ge j$.  Thus $\Bigl\Vert x^{(j)}_je_i\Bigr\Vert=\Bigl\Vert x^{(i)}_{p}e_i\Bigr\Vert<2^{-p-i}\le 2^{-j-i}$.

The sequence $\Bigl(x^{(j)}_j\Bigr)_{j\in\N}$ is a subsequence of $(x_n)_{n\in\N}$.  We claim that  $\Bigl(x^{(j)}_j\Bigr)_{j\in\N}$ order converges to $0$.  
To this end we observe that
\begin{align*}
 x^{(j)}_j  &
=  x^{(j)}_j p_j +p_j x^{(j)}_j(\mathds 1 -p_j)+(\mathds 1-p_j) x^{(j)}_j(\mathds 1-p_j)\\
&\le \bigl\| x^{(j)}_j p_j + p_j x^{(j)}_j(\mathds 1 -p_j)\bigr\|\mathds 1 + \mathds 1 -p_j\\
 & \le\bigl(1+ 2\| x^{(j)}_j p_j\|\bigr) \mathds 1  -p_j \\
&\le\biggl(1+2\sum_{i=1}^j\|x^{(j)}_j e_i\|\biggr)\mathds 1-p_j\\
& \le (1+2^{-j+1})\mathds 1 -p_j\,. 
\end{align*}

Since $(1+2^{-j+1})\mathds 1-p_j\downarrow 0$, it follows that $\Bigl( x^{(j)}_j\Bigr)_{j\in\N}$ order converges to $0$.

To complete the proof we consider the case when $(x_n)_{n\in\N}$ is not assumed to be in $M_+$.  If $x_n\st{M} 0$ then $|x_n|\st{M}0$ (where $|x_n|=\sqrt{x_n^2})$ and therefore $(|x_n|)_{n\in\N}$ converges to $0$ w.r.t. $\tau_{os}(\saM)$ by the above.  Thus $(|x_n|)_{n\in\N}$ has a subsequence $(|x_{n_k}|)_{k\in\N}$ that order converges to $0$, i.e. for which one can find a sequence $(y_k)_{k\in\N}$ in $M_+$ such that $0\le |x_{n_k}|\le y_k$ and $y_k\downarrow 0$.  The result then follows from $-y_k\le-|x_{n_k}|\le x_{n_k}\le |x_{n_k}|\le y_k$.
\end{proof}

\begin{cor}\label{3.2}
Assume that $M$ is $\sigma$-finite and $K$ is a bounded subset of $\saM$. Then the $s(M,M_\ast)$-closure of $K$ coincides with the $\tau_{os}(\saM)$-closure of $K$.
\end{cor}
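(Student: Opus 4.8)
The plan is to establish the two inclusions between the $s(M,M_\ast)$-closure $\overline{K}^{\,s}$ and the $\tau_{os}(\saM)$-closure $\overline{K}^{\,os}$ of $K$ separately; one is purely formal, while the other rests on the metrizability of the $\sigma$-strong topology on bounded sets.

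First I would localise everything inside a fixed order-bounded set. Since $K$ is bounded there is an $r>0$ with $K\subseteq r\saM^1$, and as $\saM^1$ (hence $r\saM^1$) is $s(M,M_\ast)$-closed, both $\overline{K}^{\,s}$ and $\overline{K}^{\,os}$ are contained in $r\saM^1$. The inclusion $\overline{K}^{\,os}\subseteq\overline{K}^{\,s}$ is then immediate from the containment $s(M,M_\ast)|\saM\subseteq\tau_{os}(\saM)$ recorded in \propref{1.7}: a finer topology has smaller closures.

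The substance of the proof is the reverse inclusion $\overline{K}^{\,s}\subseteq\overline{K}^{\,os}$, and here I would exploit $\sigma$-finiteness. Choose a faithful normal state $\psi$ on $M$; then on bounded subsets of $M$ the $\sigma$-strong topology is metrized by the single seminorm $\ro_\psi$ (the standard metrizability of the strong topology on bounded parts of a $\sigma$-finite algebra). Consequently $s(M,M_\ast)$ restricted to $r\saM^1$ is metrizable, so its closure there is sequentially determined. Thus, given $x\in\overline{K}^{\,s}\subseteq r\saM^1$, I can pick a sequence $(x_n)_{n\in\N}$ in $K$ with $x_n\st{M}x$. By \thmref{3.4}{\rm(ii)} this yields $x_n\sorder{\saM}x$, and since the $\tau_{os}(\saM)$-limit of a sequence taken from $K$ necessarily lies in $\overline{K}^{\,os}$, we conclude $x\in\overline{K}^{\,os}$.

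The main obstacle is precisely this reverse inclusion: the $s$-closure is a priori only a net-closure, so it must first be replaced by a sequential closure before \thmref{3.4}{\rm(ii)} can be applied. This is where both hypotheses enter essentially — $\sigma$-finiteness supplies the faithful normal state and hence the metrizability that renders the closure sequential, while \thmref{3.4}{\rm(ii)} then transfers the resulting $s$-convergent sequence into a $\tau_{os}(\saM)$-convergent one. No further boundedness argument is required, since everything has been confined to $r\saM^1$ from the outset.
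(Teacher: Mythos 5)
Your proof is correct and follows essentially the same route as the paper's: the paper's (very terse) argument is precisely that $\sigma$-finiteness makes $s(M,M_\ast)$ metrizable on bounded sets, so the $s(M,M_\ast)$-closure is sequentially determined, and Theorem~\ref{3.4}{\rm(ii)} then identifies $s(M,M_\ast)$-convergent sequences with $\tau_{os}(\saM)$-convergent ones. Your write-up merely makes explicit the details the paper leaves implicit (confinement to the $s(M,M_\ast)$-closed set $r\saM^1$, and the formal inclusion $\overline{K}^{\,os}\subseteq\overline{K}^{\,s}$ coming from {\rm(2.\ref{eq0})}).
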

\begin{proof}
When $M$ is $\sigma$-finite $s(M,M_\ast)|K$ is metrisable and therefore result follows from Theorem \ref{3.4}.
\end{proof}

\begin{cor}\label{3.3}
Assume that $M$ is $\sigma$-finite.  Then
\[\tau_s|K=s(M,M_\ast)|K=\tau(M,M_\ast)|K=\tau_o(\saM)|K=\tau_{os}(\saM)|K\]
for every bounded subset $K$ of $\saM$.
\end{cor}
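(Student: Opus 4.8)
The plan is to notice that the first three equalities are essentially for free and that only a single new inclusion has to be established. Indeed, the string
\[\tau_s|K=s(M,M_\ast)|K=\tau(M,M_\ast)|K\]
is exactly \eref{e1} and holds for every von Neumann algebra, with no $\sigma$-finiteness needed. Restricting the chain of inclusions in \propref{1.7} to $K$ furthermore gives
\[s(M,M_\ast)|K\subseteq\tau_o(\saM)|K\subseteq\tau_{os}(\saM)|K.\]
Thus all five relative topologies are squeezed between the coarsest one $s(M,M_\ast)|K$ and the finest one $\tau_{os}(\saM)|K$, and the whole statement will follow once I prove the single reverse inclusion $\tau_{os}(\saM)|K\subseteq s(M,M_\ast)|K$.

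To obtain this reverse inclusion I would argue at the level of closed sets: it suffices to show that every $\tau_{os}(\saM)|K$-closed subset $C$ of $K$ is also $s(M,M_\ast)|K$-closed. Here I would invoke the standard description of the subspace topology, namely that a subset $C\subseteq K$ is closed for $\tau_{os}(\saM)|K$ precisely when $C=\overline{C}^{\,os}\cap K$, where $\overline{C}^{\,os}$ denotes the closure of $C$ for $\tau_{os}(\saM)$ in the ambient space $\saM$ (and analogously $\overline{C}^{\,s}$ for $s(M,M_\ast)$). The decisive point is that such a $C$ is contained in the bounded set $K$, hence is itself bounded, so that Corollary~\ref{3.2} applies to $C$ and yields $\overline{C}^{\,s}=\overline{C}^{\,os}$. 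Intersecting with $K$ then gives $\overline{C}^{\,s}\cap K=\overline{C}^{\,os}\cap K=C$, i.e. $C$ is $s(M,M_\ast)|K$-closed, as required. This shows that the $\tau_{os}(\saM)|K$-closed sets form a subfamily of the $s(M,M_\ast)|K$-closed sets, which is exactly $\tau_{os}(\saM)|K\subseteq s(M,M_\ast)|K$.

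With both inclusions in hand, a squeezing argument closes every link of the chain at once: from $s(M,M_\ast)|K\subseteq\tau_o(\saM)|K\subseteq\tau_{os}(\saM)|K\subseteq s(M,M_\ast)|K$ all four of these relative topologies coincide, and combined with \eref{e1} this produces the full displayed string of equalities. I expect no genuine obstacle in the corollary itself, since all the analytic weight has already been spent in \thmref{3.4} and Corollary~\ref{3.2} (which rest on the noncommutative Egoroff theorem and on the metrisability of $s(M,M_\ast)$ on bounded parts of a $\sigma$-finite algebra). The only subtlety to handle carefully is to phrase the closure comparison relative to $K$ rather than globally; but since every relatively closed set is the intersection with $K$ of an ambient closure, and Corollary~\ref{3.2} equates those ambient closures for \emph{all} bounded sets, this passage is purely formal.
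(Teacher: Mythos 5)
Your proposal is correct and is essentially the paper's own (implicit) argument: the paper states this corollary as an immediate consequence of \eref{e1}, Proposition~\ref{1.7} and Corollary~\ref{3.2}, and your reduction to the single inclusion $\tau_{os}(\saM)|K\subseteq s(M,M_\ast)|K$ via the standard fact that a relatively closed $C\subseteq K$ equals its ambient closure intersected with $K$, combined with Corollary~\ref{3.2} applied to the bounded set $C$, is precisely the formal passage the paper leaves to the reader.
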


Note that $s(M,M_\ast)|\saM$ and $\tau_o(\saM)$ are different unless $M$ is finite dimensional (see Remark \ref{r1}{\rm(i)}). The aim of the rest of this section is to  study when the  order topology $\tau_o(\saM)$ coincides with $\tau(M,M_\ast)|\saM$.

\begin{lem}\label{3.7}  $\gamma\bigl[\tau_u|\saM,s(M,M_\ast)|\saM\bigr]=\gamma\bigl[\tau_u,s^\ast(M,M_\ast)\bigr]|\saM$.
 \end{lem}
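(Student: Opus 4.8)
The plan is to realise $\saM$ as a topological retract of $M$ for \emph{both} component topologies and then invoke a naturality property of the mixed-topology construction. Write $\mu:=\gamma\bigl[\tau_u|\saM,s(M,M_\ast)|\saM\bigr]$ and $\nu:=\gamma\bigl[\tau_u,s^\ast(M,M_\ast)\bigr]|\saM$. Since $s(M,M_\ast)$ and $s^\ast(M,M_\ast)$ agree on $\saM$, the inner component of $\mu$ is precisely the restriction to $\saM$ of the $\sigma$-strong$^\ast$ topology, matching the component appearing in $\nu$; this bookkeeping makes the two mixed topologies comparable.

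The central tool is the real-linear \emph{real part} map $R\colon M\to\saM$, $R(x):=\tfrac12(x+x^\ast)$, which satisfies $R|_{\saM}=\mathrm{id}_{\saM}$. Because the adjoint is a $\tau_u$-isometry and preserves each defining seminorm of $s^\ast(M,M_\ast)$ (indeed $\eta_\psi(x^\ast)=\eta_\psi(x)$), one gets $\|R(x)\|\le\|x\|$ and $\eta_\psi(R(x))\le\eta_\psi(x)$ for every $\psi\in M_\ast^+$. Hence $R$ is continuous from $(M,\tau_u)$ to $(\saM,\tau_u|\saM)$ and from $(M,s^\ast(M,M_\ast))$ to $(\saM,s^\ast(M,M_\ast)|\saM)=(\saM,s(M,M_\ast)|\saM)$. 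Dually, the inclusion $\iota\colon\saM\hookrightarrow M$ is tautologically continuous in each component.

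The step I would isolate as a lemma is the \emph{functoriality} of the mixed topology: a real-linear map that is continuous for each component topology separately is automatically continuous for the associated mixed topologies. This follows directly from the neighbourhood basis $\gamma\bigl((U'_n)_{n},U\bigr)=\bigcup_n\sum_{i=1}^n(U'_i\cap iU)$: choosing the $U'_i$ and $U$ inside the $f$-preimages of the target basic neighbourhoods and using $f(U'_i\cap iU)\subseteq f(U'_i)\cap i\,f(U)$ together with linearity carries a basic neighbourhood into a basic neighbourhood. I expect this verification, though routine, to be the main technical point of the argument, since the excerpt records only Wiweger's basic properties and not this naturality.

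Finally I would assemble the two inclusions. Applying functoriality to $\iota$ shows that $\iota\colon(\saM,\mu)\to\bigl(M,\gamma[\tau_u,s^\ast(M,M_\ast)]\bigr)$ is continuous; as $\nu$ is by definition the subspace (initial) topology for $\iota$, this gives $\nu\subseteq\mu$. Applying functoriality to $R$ shows that $R\colon\bigl(M,\gamma[\tau_u,s^\ast(M,M_\ast)]\bigr)\to(\saM,\mu)$ is continuous; composing with the continuous inclusion $\iota\colon(\saM,\nu)\to\bigl(M,\gamma[\tau_u,s^\ast(M,M_\ast)]\bigr)$ and using $R\circ\iota=\mathrm{id}_{\saM}$ yields that $\mathrm{id}\colon(\saM,\nu)\to(\saM,\mu)$ is continuous, i.e.\ $\mu\subseteq\nu$. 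Hence $\mu=\nu$, which is the asserted equality.
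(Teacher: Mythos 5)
Your proof is correct, and it takes a genuinely different route from the paper's. The paper handles the two inclusions asymmetrically, both times via Wiweger's maximality property (Proposition~\ref{W2.2}(ii)): the inclusion $\gamma\bigl[\tau_u,s^\ast(M,M_\ast)\bigr]|\saM\subseteq\gamma\bigl[\tau_u|\saM,s(M,M_\ast)|\saM\bigr]$ holds because the left side is a linear topology on $\saM$ agreeing with $s(M,M_\ast)|\saM$ on bounded sets (by Proposition~\ref{W2.2}(i) and $s^\ast(M,M_\ast)|\saM=s(M,M_\ast)|\saM$) while the right side is the finest such topology; for the reverse inclusion the paper transfers $\gamma:=\gamma\bigl[\tau_u|\saM,s(M,M_\ast)|\saM\bigr]$ to a real-linear topology $\tau$ on all of $M$ through the isomorphism $\saM\times\saM\ni(x,y)\mapsto x+iy\in M$, checks that $\tau$ agrees with $s^\ast(M,M_\ast)$ on bounded subsets of $M$, invokes maximality again to get $\tau\subseteq\gamma\bigl[\tau_u,s^\ast(M,M_\ast)\bigr]$, and restricts to $\saM$, where $\tau|\saM=\gamma$. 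You instead prove a naturality lemma --- real-linear maps continuous in each component separately are continuous for the mixed topologies, read off directly from the basis $\gamma\bigl((U_n')_{n},U\bigr)$ --- and exploit that the real-part map $R(x)=\frac{1}{2}(x+x^\ast)$ exhibits $\saM$ as a retract of $M$ for both component topologies (your contractivity estimates for $\tau_u$ and for each $\eta_\psi$ are sound); both inclusions then fall out formally from $\iota$ and $R$. What each approach buys: yours needs no bounded-set bookkeeping and none of Proposition~\ref{W2.2}, and it actually proves a more general fact, namely that the restriction of a mixed topology to any subspace admitting a linear retraction continuous in both components is the mixed topology of the restrictions; the paper's argument is shorter given that Wiweger's maximality characterization is already in its toolkit and is reused elsewhere (e.g.\ in the proof of Theorem~\ref{3.10}). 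Both proofs ultimately rest on the real/imaginary decomposition of $M$, used in opposite directions: the paper builds up from $\saM\times\saM$ to $M$, while you project down from $M$ onto $\saM$.
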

 \begin{proof}
 To simplify the notation let     $\gamma:=\gamma\bigl[\tau_u|\saM,s(M,M_\ast)|\saM\bigr]$.
  Since $s(M,M_\ast)|K=\gamma\bigl[\tau_u,s^\ast(M,M_\ast)\bigr]|K$  for every bounded subset $K$ of $\saM$ we get $\gamma\bigl[\tau_u,s^\ast(M,M_\ast)\bigr]|\saM\subseteq\gamma$ by Proposition \ref{W2.2} {\rm(ii)}.
      For the reverse inclusion let $\tilde M:=\saM\times \saM$.  Equipped with  addition and scalar multiplication on $\R$ defined  pointwise, $\tilde M$ is a vector space over $\R$ and the mapping $\tilde M\ni (x,y)\mapsto x+iy\in M$ is an isomorphism  of $\tilde M$ onto $M$ (as real vector spaces).  $(\tilde M,\gamma\times\gamma)$ is a Hausdorff topological vector space over $\R$. Denote by $\tau$ the Hausdorff real-linear topology induced on $M$ by $\gamma\times\gamma$. Then $\tau|K=s^\ast(M,M_\ast)|K$ for every bounded subset $K$ of $M$.  Hence $\tau\subseteq\gamma\bigl[\tau_u,s^\ast(M,M_\ast)\bigr]$ and therefore $\tau|\saM\subseteq \gamma\bigl[\tau_u,s^\ast(M,M_\ast)\bigr]|\saM$.  Finally, observe that $\tau|\saM=\gamma$ and hence the required inclusion holds.
\end{proof}

 Let $p$ be a nonzero projection in $M$.  We recall that $p$ is said to be a minimal projection if whenever $e$ is  a nonzero projection such that $0\ne e\le p$ then $e=p$.  Equivalently, $p$ is minimal if $pMp=\mathds C p$.  If $pMp$ is abelian then $p$ is said to be an abelian projection.  Every minimal projection is obviously abelian.  $M$ is said to be of type I if every nonzero central projection of $M$ majorizes  a nonzero abelian projection.  We recall that every type I factor is $\ast$-isomorphic to a $B(H)$ for some Hilbert space $H$.
A von Neumann algebra $M$ is said to be atomic if every nonzero projection majorizes a minimal projection.   Obviously if $M$ is atomic then $M$ is of type I.  Moreover, we say that $M$  \emph{purely atomic} if every von Neumann subalgebra of $M$ is atomic.  (When talking about subalgebra we do not require that subalgebra contains the unit of its superalgebra.) Observe that $M$ can be atomic without being purely atomic.  For example $B(H)$ when $H$ is infinite-dimensional and separable is atomic but not purely atomic because $L^\infty[0,1]$ can be identified with a von Neumann subalgebra of $B(H)$.

\begin{thm}\label{3.9}
The following three statements are equivalent:
\begin{enumerate}[{\rm(i)}]
\item The unit ball $M^1$ is $\sstar$-compact (and therefore on bounded parts of $M$ the $\sigma$-strong$^\ast$ topology coincides with the weak$^\ast$-topology),
\item $M$ is purely atomic,
\item $M$ is $\ast$-isomorphic to the direct sum of finite dimensional matrix algebras.
\end{enumerate}
\end{thm}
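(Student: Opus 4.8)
The plan is to prove the cycle $(iii)\Rightarrow(i)\Rightarrow(ii)\Rightarrow(iii)$, using the embeddability of $L^\infty[0,1]$ as the bridge between the topological condition $(i)$ and the algebraic conditions $(ii),(iii)$. To begin with $(iii)\Rightarrow(i)$, I would write $M=\bigoplus_i M_{k_i}$ as an $\ell^\infty$-direct sum of finite matrix algebras, so that $M_\ast=\bigoplus_i (M_{k_i})_\ast$ is the corresponding $\ell^1$-sum and $M^1=\prod_i (M_{k_i})^1$. The key step is to show that $\sstar$ restricted to $M^1$ coincides with the product of the (unique) norm topologies on the factors: for $\psi=(\psi_i)\in M_\ast^+$ one has $\eta_\psi(x)^2=\sum_i\eta_{\psi_i}(x_i)^2$ with $\sum_i\|\psi_i\|<\infty$, and the estimate $\eta_{\psi_i}(y)^2\le 2\|\psi_i\|\,\|y\|^2$ valid on $M^1$ shows that each $\eta_\psi$ depends, up to $\varepsilon$, only on finitely many coordinates, while on each coordinate the $s^\ast$-seminorms already generate the norm topology of $M_{k_i}$. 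Tychonoff's theorem then yields that $M^1$ is $\sstar$-compact, which is $(i)$; the parenthetical assertion follows because the identity map from $(M^1,\sstar)$ to $(M^1,\sigma(M,M_\ast))$ is then a continuous bijection between compact Hausdorff spaces (the target being compact by Banach--Alaoglu and the source compact by hypothesis), hence a homeomorphism.

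For $(i)\Rightarrow(ii)$ I would argue contrapositively. If $M$ is not purely atomic, it contains a von Neumann subalgebra $N$ that is not atomic, hence a nonzero projection $p\in N$ such that $pNp$ has no minimal projection; a standard dyadic splitting of $p$ into ever finer orthogonal subprojections then produces a diffuse abelian von Neumann subalgebra inside $M$, and therefore a copy $A\cong L^\infty[0,1]$. The crucial point is that $A^1$ fails to be $\sstar$-compact: identifying $A$ with multiplication operators on $L^2[0,1]$ and taking the Rademacher functions $r_n\in A^1$, one has $r_n\to 0$ with respect to $\sigma(M,M_\ast)$, so since $\sigma(M,M_\ast)\subseteq\sstar$ the only possible $\sstar$-cluster point of $(r_n)$ is $0$; but for a normal state $\psi_0$ restricting to Lebesgue integration one computes $\eta_{\psi_0}(r_n)^2=2$, so $0$ is not a cluster point and $(r_n)$ has no $\sstar$-cluster point at all. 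As $A$ is weak$^\ast$-closed, $A^1=A\cap M^1$ is $\sstar$-closed, and $\sstar|A=s^\ast(A,A_\ast)$ (using that normal states on a von Neumann subalgebra extend to normal states on $M$); thus a closed subset of the putatively compact $M^1$ would be non-compact, contradicting $(i)$.

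For $(ii)\Rightarrow(iii)$, a purely atomic algebra is in particular atomic, and an atomic von Neumann algebra is $\ast$-isomorphic to $\bigoplus_i B(H_i)$ by the standard type~I structure theory. It then remains to exclude infinite-dimensional summands: if some $H_i$ were infinite-dimensional, then $B(H_i)$, and hence $M$, would contain a copy of $L^\infty[0,1]$ (as observed in the discussion preceding \thmref{3.9}), which is not atomic and so violates pure atomicity. Consequently every $H_i$ is finite-dimensional and $M\cong\bigoplus_i M_{\dim H_i}$, which is $(iii)$.

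The main obstacle I anticipate is the interface between the purely topological hypothesis and the algebraic structure, rather than any single computation. Concretely, the delicate points are the justification that failure of pure atomicity forces an embedded copy of $L^\infty[0,1]$ (the dyadic construction of a diffuse abelian subalgebra, ensuring no atoms appear along branches) and the identification $\sstar|A=s^\ast(A,A_\ast)$ for a von Neumann subalgebra $A$, which rests on the surjectivity of the restriction map $M_\ast\to A_\ast$ and the liftability of normal positive functionals. Once these structural facts are secured, the $L^\infty[0,1]$ non-compactness argument and the Tychonoff computation are routine.
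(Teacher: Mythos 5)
Your proposal runs through the same cycle {\rm(iii)}$\Rightarrow${\rm(i)}$\Rightarrow${\rm(ii)}$\Rightarrow${\rm(iii)} and uses the same three core ideas as the paper: Tychonoff for {\rm(iii)}$\Rightarrow${\rm(i)} (the paper merely asserts that $(M^1,\sstar|M^1)$ is homeomorphic to the product $\prod_\lambda(B_\lambda,\|\cdot\|_\lambda)$; your seminorm estimate is exactly the justification it omits); a Rademacher-type sequence in the unit ball of a non-atomic subalgebra whose pairwise $\eta_\psi$-distances are bounded below, killing compactness, for {\rm(i)}$\Rightarrow${\rm(ii)}; and the decomposition over minimal central projections for {\rm(ii)}$\Rightarrow${\rm(iii)}. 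The genuine divergence is inside {\rm(i)}$\Rightarrow${\rm(ii)}: the paper stays in the possibly noncommutative non-atomic subalgebra $N$ and obtains exactly halved dyadic projections $\varphi(p_{n,i})=2^{-n}$ from the noncommutative Lyapunov theorem \cite{AkemAnder}, whereas you first manufacture a diffuse abelian subalgebra $A\cong L^\infty[0,1]$ and then use the classical Rademacher functions. Your route trades a heavy noncommutative ingredient for classical measure theory; the restriction/extension facts you flag ($\sstar|A=s^\ast(A,A_\ast)$ via surjectivity of $M_\ast\to A_\ast$ on positive normal functionals, and $A^1$ being $\sstar$-closed) are standard, and are in fact tacitly needed in the paper's version as well, since its seminorm estimate is computed with a normal state of the subalgebra $N$.

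The one real soft spot is the step you yourself single out: a ``standard dyadic splitting'' of $p$ does \emph{not} by itself yield a \emph{diffuse} abelian subalgebra. Absence of minimal projections lets you split every nonzero projection into two nonzero orthogonal ones, but with no quantitative control the infimum along a branch of the dyadic tree can be nonzero, and such an infimum is an atom of the generated abelian algebra; this can happen already for badly chosen splittings inside $L^\infty[0,1]$ itself (shrink the leftmost piece to $\chi_{[0,1/2]}$). What rules it out is a faithful normal state $\varphi$ with $\varphi(p_{n,i})$ tending to $0$ uniformly in $i$, say $\varphi(p_{n,i})=2^{-n}$ --- and producing such a splitting in a noncommutative $N$ is precisely what the paper invokes Lyapunov for, so as written your argument either hides that theorem or is incomplete. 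The clean repair that keeps your argument elementary: cut down to a $\sigma$-finite corner $qNq$ with no minimal projections and take a \emph{maximal} abelian subalgebra $A$ of $qNq$. Such a masa has no minimal projections (if $e$ were one, every self-adjoint element of $e(qNq)e$ would commute with $A$, hence lie in $A\cap e(qNq)e=\mathds{C}e$ by maximality, making $e$ minimal in $qNq$), so $A\cong L^\infty(\mu)$ for a nonatomic probability measure $\mu$, where exact dyadic splitting is classical (nonatomic measures have the Darboux property). With that substitution your proof closes up and is a correct, somewhat more elementary alternative to the paper's.
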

\begin{proof}
{\rm(i)}$\Rightarrow${\rm(ii)}.  Suppose that $M$ is not purely atomic and let $N$ be a von Neumann subalgebra of $M$ that is not atomic.  Without any loss of generality we can assume that $N$ has no minimal projections and that it is $\sigma$-finite.  Let $\varphi$ be a faithful normal state on $N$.  Using the noncommutative version of Lyapunov Theorem  \cite{AkemAnder} (or \cite{ChEnFu,Barbieri}) it is possible to define projections like the Rademacher functions: $\{p_{n,i}:n\in\N,\ i=1,\dots,2^n\}$ such that  $\mathds 1_N=p_{1,1}+p_{1,2}$ and $p_{n-1,i}=p_{n,2i-1}+p_{n,2i}$; and moreover $\varphi(p_{n,i})=2^{-n}$.  Put $e_n=\sum_{i=1}^{2^n}(-1)^ip_{n,i}$.  Then $(e_n)_{n\in\N}$ is a sequence of self-adjoint elements in the unit ball of $N$ and $\eta_\varphi(e_n-e_m)=2$ for every $n\neq m$.  This implies that the unit ball of $N$ is not $s^\ast(N,N_\ast)$-compact and thus result follows.

{\rm(ii)}$\Rightarrow${\rm(iii)}. Let $\centre{M}$ denote the centre of $M$.  If $z$ is a minimal projection of $\centre{M}$ then  $zM$ is a type I factor and therefore $zM$ is $\ast$-isomorphic to $B(H)$ for some Hilbert space $H$.  Note that $H$ cannot be infinite-dimensional because $M$ is purely atomic.    The result then follows by taking a family of pairwise orthogonal minimal  projections in $\centre{M}$ say $\{z_\lambda:\lambda\in \Lambda\}$ such that $\sum_{\lambda\in \Lambda}z_\lambda =\mathds 1$.

{\rm(iii)}$\Rightarrow${\rm(i)}. Let $M=\sum_{\lambda\in \Lambda}\oplus B(H_{n_\lambda})$ where $\Lambda$ is an indexing set and $n_\lambda\in\N$ for every $\lambda\in\N$. Denote by $\Vert\cdot\Vert_\lambda$ the norm on $B(H_{n_\lambda})$ and by $B_\lambda$ its unit ball.  Then $(B_\lambda,\|\cdot\|_\lambda)$ is compact  for every $\lambda \in \Lambda$ and therefore the  product space $\Pi_{\lambda\in \Lambda}(B_\lambda,\|\cdot\|_\lambda)$ is compact by Tychonoff Theorem.  Observe that $(M^1,\sstar|M^1)$ is homeomorphic with $\Pi_{\lambda\in \Lambda}(B_\lambda,\|\cdot\|_\lambda)$ and therefore result follows. 
\end{proof}

\begin{thm}\label{3.10}
The following statements are equivalent:
\begin{enumerate}[{\rm(i)}]
\item  $\tau(M,M_\ast)|\saM=\tau_{os}(\saM)$, 
\item  $\tau(M,M_\ast)|\saM$ is sequential,
\item $\tau(M,M_\ast)$ is sequential,
\item $M$ is $\sigma$-finite and $\tau_{o}(\saM)$ is a linear topology,
\item $M$ is $\sigma$-finite and satisfies one (and therefore all) of the equivalent conditions of Theorem \ref{3.9}.
\end{enumerate}
\end{thm}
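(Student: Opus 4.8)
The plan is to single out condition (v) --- write $(\star)$ for ``$M$ is $\sigma$-finite and purely atomic'' --- and prove the two implications $(\star)\Rightarrow\text{(i)--(iv)}$ and $\neg(\star)\Rightarrow\neg\text{(i)},\neg\text{(ii)},\neg\text{(iii)},\neg\text{(iv)}$; together these give all five equivalences. Two facts will be used throughout. First, combining \lemref{3.7}, \thmref{mixed} and \propref{W2.2}{\rm(ii)} (with $\tau:=\tau_u|\saM$, which is normable hence locally bounded), the topology $\tau(M,M_\ast)|\saM=\gamma\bigl[\tau_u|\saM,s(M,M_\ast)|\saM\bigr]$ is \emph{the finest linear topology on $\saM$ that agrees with $s(M,M_\ast)$ on bounded sets}. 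Second, $\tau_{os}(\saM)$ is always a sequential topology: if $C$ is closed under $\tau_{os}$-convergent sequences then, since order convergence implies $\tau_{os}$-convergence, $C$ is sequentially order closed, i.e. $\tau_{os}$-closed.

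For $(\star)\Rightarrow$ everything, \thmref{3.9} gives $M\cong\bigoplus_n B(H_{k_n})$ with each $k_n$ finite and (by $\sigma$-finiteness) countably many summands. Fixing countable dense subsets $D_n$ of the unit spheres of the $H_{k_n}$, the countable family of seminorms $\rho_{n,\xi,\eta}(x):=|\langle x_n\xi,\eta\rangle|$ ($\xi,\eta\in D_n$) satisfies $\sup_{n,\xi,\eta}\rho_{n,\xi,\eta}=\|\cdot\|$, induces a topology $\tau'$ that agrees with $\sstar$ on bounded sets, and has $M^1$ compact (as $M^1$ is $\sstar$-compact by \thmref{3.9}). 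Then $\gamma[\tau_u,\tau']=\gamma[\tau_u,\sstar]=\tau(M,M_\ast)$ by \propref{W2.2}{\rm(ii)}, and Corollary~\ref{2.8} (countably many seminorms) yields that $\tau(M,M_\ast)$ is sequential, proving (iii). Working instead on $\saM$ with the analogous countable family (so that $\saM^1$ is $\tau'$-compact) and taking $\tau'':=\tau_o(\saM)$, Corollary~\ref{3.3} gives $\tau_o(\saM)|r\saM^1=s(M,M_\ast)|r\saM^1=\tau'|r\saM^1$ for all $r>0$, so Corollary~\ref{2.7} forces $\tau_o(\saM)\subseteq\tau(M,M_\ast)|\saM$; the reverse inclusion always holds, whence $\tau_o(\saM)=\tau(M,M_\ast)|\saM$ is linear. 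With \propref{2.1} ($\sigma$-finiteness gives $\tau_o(\saM)=\tau_{os}(\saM)$) this yields (i) and (iv), and (ii) follows from (i).

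For the converse I would split $\neg(\star)$ into two cases. If $M$ is $\sigma$-finite but not purely atomic, then $M$ is infinite-dimensional and $M^1$ is not $\sstar$-compact (\thmref{3.9}), while $\sstar|M^1$ is metrizable (as $M$ is $\sigma$-finite) and strictly coarser than $\tau_u|M^1$; thus \thmref{2.5} applies both to $(M,\tau_u,\sstar)$ and to $(\saM,\tau_u|\saM,s(M,M_\ast)|\saM)$, showing that $\tau(M,M_\ast)$ and $\tau(M,M_\ast)|\saM$ are \emph{not} sequential. This kills (ii) and (iii); since $\tau_{os}(\saM)$ is sequential, (i) fails too, and (iv) fails because a linear $\tau_o(\saM)$ would, by the ``finest'' characterization above together with Corollary~\ref{3.3} and \propref{2.1}, coincide with the non-sequential $\tau(M,M_\ast)|\saM$ while equaling the sequential $\tau_{os}(\saM)$.

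The remaining, and main, obstacle is the case $M$ \emph{not} $\sigma$-finite, where \thmref{2.5} is unavailable because $\sstar|M^1$ is no longer metrizable. Here (i) fails immediately from \propref{2.1} ($\tau(M,M_\ast)|\saM\subseteq\tau_o(\saM)\subsetneq\tau_{os}(\saM)$). For (ii) and (iii) I would first record the reduction: if $\bigl(X,\gamma[\tau,\tau']\bigr)$ is sequential and $X^1$ is $\tau'$-closed, then $\bigl(X^1,\tau'\bigr)$ is sequential (a $\tau'$-sequentially closed $C\subseteq X^1$ is $\gamma$-sequentially closed, hence $\gamma$-closed, hence $\tau'$-closed, using $\gamma|X^1=\tau'|X^1$ from \propref{W2.2}{\rm(i)}). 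It then suffices to show that $\bigl(M^1,\sstar\bigr)$ and $\bigl(\saM^1,\sstar\bigr)$ are not sequential. Take a maximal family $\{p_\lambda\}_{\lambda\in\Lambda}$ of pairwise orthogonal nonzero projections; it sums to $\mathds 1$ and, by non-$\sigma$-finiteness, $\Lambda$ is uncountable. Let $N\cong\ell^\infty(\Lambda)$ be the generated abelian subalgebra and put $q_S:=\sum_{\lambda\in S}p_\lambda$. The witness is $C:=\{q_S:S\subseteq\Lambda\text{ countable}\}$: it is $\sstar$-sequentially closed because an $\sstar$-limit of a sequence $(q_{S_n})$ is a projection in the $\sstar$-closed algebra $N$, hence some $q_T$, and $T\subseteq\bigcup_nS_n$ is countable (otherwise a normal state $\omega_\mu$ with $\mu\in T\setminus\bigcup_nS_n$ keeps $\eta_{\omega_\mu}(q_T-q_{S_n})$ bounded away from $0$); yet $C$ is not $\sstar$-closed, since the increasing net $(q_S)_{S\text{ finite}}$ converges to $\mathds 1=q_\Lambda\notin C$. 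The bulk of the work is this construction together with verifying the reduction; once done, (ii) and (iii) fail, and (iv) is vacuous as $M$ is not $\sigma$-finite.
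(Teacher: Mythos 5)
Your proof is correct, and it runs on the same engine as the paper's --- Theorem~\ref{mixed}, Lemma~\ref{3.7}, the mixed-topology results (Theorem~\ref{2.5}, Corollaries~\ref{2.7} and~\ref{2.8}), Theorem~\ref{3.9}, Corollary~\ref{3.3} and Proposition~\ref{2.1} --- but it is organized genuinely differently and replaces two of the paper's key steps. The paper proves the cycle {\rm(i)}$\Rightarrow${\rm(ii)}$\Rightarrow${\rm(v)}$\Rightarrow${\rm(iii)}$\Rightarrow${\rm(i)} together with {\rm(i)}$\Leftrightarrow${\rm(iv)}, whereas you prove {\rm(v)}$\Rightarrow$everything and $\neg${\rm(v)}$\Rightarrow\neg$each. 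Two substantive differences. First, for {\rm(v)}$\Rightarrow${\rm(i)},{\rm(iv)} you apply Corollary~\ref{2.7} with $\tau'':=\tau_o(\saM)$ to obtain $\tau_o(\saM)\subseteq\tau(M,M_\ast)|\saM$ directly; the paper instead reaches {\rm(i)} from {\rm(iii)} via Theorem~\ref{3.4} (two sequential topologies with the same convergent sequences coincide). Both are sound, and your route yields the linearity in {\rm(iv)} simultaneously. Second, in the non-$\sigma$-finite case the paper exhibits in one line the set $N$ of positive elements dominated by countable suprema of an uncountable orthogonal family and asserts it is sequentially $\tau(M,M_\ast)$-closed but not closed; you instead prove a reduction lemma (sequentiality of $\gamma[\tau,\tau']$ passes down to $(X^1,\tau')$ when $X^1$ is $\tau'$-closed) and then show that $(M^1,\sstar)$ is not sequential via the witness $\set{q_S}{S\subseteq\Lambda\text{ countable}}$ inside a copy of $\ell^\infty(\Lambda)$. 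This is longer but actually supplies the verification the paper leaves implicit, and it isolates a statement of independent interest about the unit ball. Two points you should make explicit: when you apply Theorem~\ref{2.5} to $\bigl(\saM,\tau_u|\saM,s(M,M_\ast)|\saM\bigr)$ in the $\sigma$-finite non-atomic case, you need that $\saM^1$ is not $s(M,M_\ast)$-compact; this does follow from the non-$\sstar$-compactness of $M^1$ (since $M^1$ is an $\sstar$-closed subset of $\saM^1+i\saM^1$ and $(x,y)\mapsto x+iy$ carries $s\times s$-compact sets to $\sstar$-compact sets), but it is not automatic and the paper records the needed observation. Likewise, the claim that your matrix-coefficient seminorms agree with $\sstar$ on bounded sets deserves a line (uniform boundedness plus finite-dimensionality of the blocks), though the paper's corresponding claim for the component norms is asserted just as briefly.
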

\begin{proof}
{\rm(i)}$\Rightarrow${\rm(ii)}.  If $\tau_{os}(\saM)=\tau(M,M_\ast)|\saM$ then $\tau(M,M_\ast)|\saM$ is sequential since $\tau_{os}(\saM)$ is obviously sequential.  

{\rm(ii)}$\Rightarrow${\rm(v)}. Suppose that $\tau(M,M_\ast)|\saM$ is sequential.  Observe that $M$ must be $\sigma$-finite because otherwise it contains an uncountable family $\{p_\gamma:\gamma\in\Gamma\}$ of  nonzero orthogonal projections and then the set 
\[N:=\biggl\{x\in M_{sa}:\exists \Gamma_0\subseteq\Gamma,\, |\Gamma_0|\le\aleph_0,\, 0\le x\le\bigvee_{\gamma\in\Gamma_0}p_\gamma\biggr\}\]
 is sequentially $\tau(M,M_\ast)$-closed but not $\tau(M,M_\ast)$-closed.   We recall that   $s(M,M_\ast)|\saM^1$ is metrizable when $M$ is $\sigma$-finite.  Let us show that $M^1$ is $s^\ast(M,M_\ast)$-compact.   Since $M^1\subseteq \saM^1+i\saM^1$ is  $\sstar$-closed, it suffices to show that $\saM^1$ is $s(M,M_\ast)$-compact.  If $\saM^1$ is not $s(M,M_\ast)$-compact then we can apply Theorem \ref{2.5} with $X:=\saM$, $\tau':=s(M,M_\ast)|\saM$ and $\tau:=\tau_u|\saM$ to deduce that the mixed topology $\gamma\bigl[\tau_u|\saM,s(M,M_\ast)|\saM]$ is not sequential and therefore {\rm(v)} follows by Lemma \ref{3.7} and Theorem \ref{mixed}.
 
 {\rm(v)}$\Rightarrow${\rm(iii)}.  Assume that $M$ is $\sigma$-finite and that it is $\ast$-isomorphic to the direct sum of finite dimensional matrix algebras, say $M=\sum_{\lambda\in \Lambda}\oplus B(H_{n_\lambda})$ where $\Lambda$ is countable.  We can apply Corollary \ref{2.8}  with $X:=M$ and $(\rho_\lambda)_{\lambda\in\Lambda}$ defined by $\rho_\lambda(x):=\|x_\lambda\|_\lambda$ where $x=(x_\lambda)_{\lambda\in\Lambda}$ and $\|\cdot\|_\lambda$ denotes the norm on $B(H_{n_\lambda})$ to deduce that $\gamma[\tau,\tau']$ is sequential.  Obviously $\tau$ coincides with $\tau_u$ and it is easily seen that on bounded parts of $M$ the topology $\tau'$ (=product topology) coincides with $s^\ast(M,M_\ast)$.  Hence (see comment following Proposition \ref{W2.2}) $\gamma[\tau,\tau']=\gamma\bigl[\tau_u,s^\ast(M,M_\ast)\bigr]$.  Thus {\rm(iii)} follows by Theorem \ref{mixed}.
 
{\rm(iii)}$\Rightarrow${\rm(i)}.  If $\tau(M,M_\ast)$ is sequential then $\tau(M,M_\ast)|\saM$ is sequential and therefore $M$ is $\sigma$-finite.   Therefore we get $\tau(M,M_\ast)|\saM=\tau_{os}(\saM)$ in virtue of Theorem \ref{3.4}.

{\rm(i)}$\Leftrightarrow${\rm(iv)}.  The implication {\rm(i)}$\Rightarrow${\rm(iv)} is trivial.  In virtue of  Lemma \ref{3.7} and Theorem \ref{mixed} we have  
 $\tau(M,M_\ast)|\saM=\gamma\bigl[\tau_u,s^\ast(M,M_\ast)\bigr]|\saM=\gamma\bigl[\tau_u|\saM,s(M,M_\ast)|\saM\bigr]$, i.e. $\tau(M,M_\ast)|\saM$ is the finest linear topology on $\saM$ that agrees with $s(M,M_\ast)|\saM$ on bounded subsets of $\saM$.  Thus in view of Corollary \ref{3.3} we get $\tau_{os}(\saM)\subseteq\tau(M,M_\ast)|\saM$ when $\tau_o(\saM)$ is linear.
 \end{proof}

\section{The order topology and the sequential order topology on $\saM^1$ and $P(M)$}\label{section4}

The order topology on the projection lattice of a Hilbert space was studied in \cite{Palko} and \cite{BCW}.  Let $\mathcal L$ denote the lattice of projections on a separable Hilbert space $H$.  Using \eqref{e2} and {\rm ({2.}\ref{eq3})} we immediately get $\tau_s|\mathcal L\subseteq \tau_o(\mathcal L)$.  \cite[Example 2.4]{Palko} shows that if $\dim H\ge 2$ then $\tau_o(\mathcal L)\nsubseteq \tau_u|\mathcal L$ and therefore $\tau_s|\mathcal L\neq\tau_o(\mathcal L)$.  (This is in contrast with Corollary \ref{3.3}.)  In relation to this we mention that in \cite[Theorem 20]{BCW}  the authors show that when $B$ is maximal Boolean sublattice of $\mathcal L$ then $\tau_o(\mathcal L)|B=\tau_s|B$.  Let us point out that in fact this follows from Proposition \ref{p1} and from Corollary \ref{3.3}.  Indeed, if $B$ is a maximal Boolean sublattice of $\mathcal L$ then $B$ is the projection lattice of a maximal abelian $\ast$-subalgebra $M$ of $B(H)$ and therefore  
 \[\tau_s|B=s(M,M_\ast)|B=\tau_o(\saM)|B=\tau_o(B).\] 
 
 When $\dim H<\infty$ the order topology on $\mathcal L$ coincides with the discrete topology and therefore it is finer than the restriction of the uniform topology but \cite[Example 2.3]{Palko} shows that if $\dim H=\infty$ then $\tau_u|\mathcal L\nsubseteq \tau_o(\mathcal L)$.    In \cite{Palko} V.~Palko conjectured that $\tau_s|\mathcal L=\tau_o(\mathcal L)\,\cap\,\tau_u|\mathcal L$.  This is obviously true when $\dim H<\infty$ and in full agreement with the conjecture, he proved that a sequence of atoms in $\mathcal L$ converges to $0$ w.r.t. $\tau_s$ if and only if it converges to $0$ w.r.t. $\tau_u|\mathcal L\,\cap\,\tau_o(\mathcal L)$.  \cite[Example 16]{BCW} however shows that  $\tau_s|\mathcal L=\tau_o(\mathcal L)\,\cap\,\tau_u|\mathcal L$ holds only when $\dim H<\infty$.

In this section we study the order topology and the sequential order topology on $\saM^1$ and $P(M)$.    Proposition \ref{1.7} already gives that $\tau_o(\saM)|\saM^1\subseteq\tau_o(\saM^1)$ and  $\tau_{o}(\saM)|P(M)\subseteq \tau_{o}(P(M))$.  (Similar inclusions hold for the sequential order topology in view of the comment following Proposition \ref{1.6}.)  

We shall now exhibit an example that will be used later.  It is in fact a construction given in  \cite[Example 26]{BCW}.   

\begin{exmp}\label{4.1}
Let $(\xi_n)_{n\in\N}$ be an orthonormal basis of a separable Hilbert space $H$.
For each $n$  let $p_n$   denote the projections of $H$ onto $\overline{\mbox{span}}\{\frac 1n\xi_1+\xi_n, \xi_{n+1},\xi_{n+2},\ldots\}$.
 Let $M=B(H)$.  Then
\begin{enumerate}[{\rm(i)}]
\item $(p_n)_{n\in\N}$  converges to $0$ w.r.t.  $\tau_s$,
\item $(p_n)_{n\in\N}$  converges to $0$ w.r.t.  $\tau_{os}(M_{sa)}$,
\item $(p_n)_{n\in \N}$  does not converge to $0$ w.r.t $\tau_{os}(M_{sa}^1)$,
\item $(p_n)_{n\in\N}$  does not converge to $0$ w.r.t. $\tau_{os}(P(M))$.
\end{enumerate}
\end{exmp}
\begin{proof}
 {\rm (i)} and {\rm(iv)} were proved in \cite[Example 26]{BCW}.  {\rm(ii)} follows from Theorem \ref{3.4} and \eqref{e1}.
To prove {\rm(iii)} suppose that $(a_{n_k})_{k\in\N}$ is a sequence in $\saM^1$ such that $p_{n_k}\le a_{n_k}$ for every $k\in\N$.  Then {\rm(ii)} of Lemma \ref{4.2} yields $a_{n_k}\ge \bigvee_{i\ge k}p_{n_i}\ge p$ for every $k\in\N$ where $p$ denotes the projection of $H$ onto the one-dimensional subspace spanned by $\xi_1$.  
\end{proof}

We recall that two projections $e$ and $f$ in $M$ are said to be equivalent (in symbols $e\sim f$) if there exists $u\in M$ such that $uu^\ast=e$ and $u^\ast u=f$.   A projection $e$ is said to be finite if whenever $f$ is a projection such that $e\sim f$ and $f\le e$ then $e=f$.  If $e$ is not finite then it is infinite.  Moreover, $e$ is said to be properly infinite if $ze$ is infinite or $0$ for every $z\in \mathcal Z(M)$.  $M$ is said to be finite, infinite or properly infinite according to the property of the identity projection $\mathds 1$. Moreover, there are two orthogonal projections $z_f$ and $z_i$ in $\centre{M}$ such that $z_f$ is finite, $z_i$ is properly infinite and $z_f+z_i=\mathds 1$.  We further recall that if $M$ is properly infinite  then  there is a sequence of mutually equivalent and pairwise orthogonal projections $(e_n)_{n\in\N}$ such that $\bigvee_{n\in\N}e_n=\mathds 1$.  These projections, together with the  partial isometries implementing their equivalence, generate a Type I subfactor (i.e. a unital von Neumann subalgebra that is a factor) $N$ of $M$  that is $\ast$-isomorphic to $B(H)$ for some separable infinite-dimensional Hilbert space $H$.  Thus, by Example \ref{4.1}, and since $P(N)$ is a complete sublattice of $P(M)$ it follows that a properly infinite von Neumann algebra $M$ contains a sequence of projections which is $\sigma$-strongly null but not $\tau_o(P(M))$-null.  This observation is in part a motivation for Theorem \ref{4.4} in which we give a new characterization of finite von Neumann algebras.

We further recall that in the proof of Proposition \ref{p1} we have seen  that when $N=B(H_2)$ then $\tau_o(N_{sa}^1)|P(N)=\tau_o(P(N))$, i.e. unlike {\rm ({2.}\ref{eq1})} and {\rm ({2.}\ref{eq3})}, in {\rm ({2.}\ref{eq2})} we can have an equality without the algebra being abelian.

\begin{lem}\label{4.3}
Let $(p_i)$ be a decreasing sequence  of projections in $M$.  Then the sequence $\bigl(2^{-i}(\mathds 1-p_i)+p_i\bigr)$ is decreasing.
\end{lem}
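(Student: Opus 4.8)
The plan is to prove the lemma by showing directly that $a_{i+1}\le a_i$ for each $i$, where I abbreviate $a_i:=2^{-i}(\mathds 1-p_i)+p_i$, by exhibiting the difference $a_i-a_{i+1}$ as a sum of manifestly positive elements of $\saM$. First I would rewrite each term in the more convenient form $a_i=2^{-i}\mathds 1+(1-2^{-i})p_i$, which separates the scalar part from the projection part and makes the subsequent bookkeeping cleaner.

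The key structural input is that the sequence $(p_i)$ is decreasing, so $p_{i+1}\le p_i$. Since these are projections, this means that $q_i:=p_i-p_{i+1}$ is again a projection, and in particular $q_i\ge 0$. Substituting $p_i=q_i+p_{i+1}$ into $a_i-a_{i+1}$, collecting the coefficients of $\mathds 1$, $q_i$ and $p_{i+1}$, and using the elementary identity $2^{-i}-2^{-(i+1)}=2^{-(i+1)}$, I expect to arrive at
\[
a_i - a_{i+1} = 2^{-(i+1)}(\mathds 1 - p_{i+1}) + (1 - 2^{-i})(p_i - p_{i+1}).
\]
Both summands are positive: $\mathds 1-p_{i+1}\ge 0$ because $p_{i+1}$ is a projection, the scalar $1-2^{-i}$ is nonnegative, and $p_i-p_{i+1}\ge 0$ as just noted. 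Hence $a_i-a_{i+1}\ge 0$, which is exactly the assertion $a_{i+1}\le a_i$.

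There is essentially no obstacle here beyond the arithmetic of the coefficients; the only place where the hypothesis is genuinely used is in asserting that $p_i-p_{i+1}$ is positive, which fails for arbitrary projections and is precisely what the \emph{decreasing} assumption guarantees. A longer but equally valid alternative would be to diagonalize: since $p_{i+1}\le p_i$ the two projections commute, so one can split $\mathds 1=p_{i+1}+(p_i-p_{i+1})+(\mathds 1-p_i)$ into three mutually orthogonal pieces and check that $a_i-a_{i+1}$ acts on them as the nonnegative scalars $0$, $1-2^{-(i+1)}$ and $2^{-(i+1)}$ respectively. I would, however, favour the direct decomposition above, since it avoids invoking any spatial representation of $M$ and keeps the argument purely order-theoretic within $\saM$.
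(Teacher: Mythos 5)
Your proof is correct and is essentially the paper's own argument: the paper likewise verifies the algebraic identity $2^{-i-1}(\mathds 1-p_{i+1})+p_{i+1}=2^{-i-1}(\mathds 1-p_i)+p_i-(1-2^{-i-1})(p_i-p_{i+1})$ and concludes from the positivity of $\mathds 1-p_i$ and $p_i-p_{i+1}$, which is just a regrouping of your two positive summands. The only cosmetic difference is that you write the difference in terms of $\mathds 1-p_{i+1}$ with coefficient $1-2^{-i}$ rather than $\mathds 1-p_i$ with coefficient $1-2^{-i-1}$; both decompositions are equal and the argument is the same.
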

\begin{proof}
Indeed,
\begin{align*}
2^{-i-1}(\mathds 1-p_{i+1})+p_{i+1}&
=2^{-1}(2^{-i}(\mathds 1-p_i))+p_i-(\mathds 1-2^{-i-1})(p_i-p_{i+1})\\
&\le 2^{-i}(\mathds 1-p_i)+p_i.
\end{align*}
\end{proof}

  Let us recall that if $p$ and $q$ are projections in $M$ then $p\vee q -p \sim q - p\wedge q$ \cite[p. 292,  Proposition V.1.6]{Takesaki}.  Hence, if a state $\psi$ on $M$ is tracial (i.e. $\psi(x^\ast x)=\psi(xx^\ast)$ for all $x\in M$), then its restriction to $P(M)$ is a valuation, i.e. $\psi(p\vee q)+\psi(p\wedge q)=\psi(p)+\psi(q)$ for any $p,q\in P(M)$.  Consequently, $\psi$ is subadditive, i.e. $\psi(p\vee q)\le\psi(p)+\psi(q)$ for any $p,q\in P(M)$; if, moreover,  $\psi$ is normal, then it is even $\sigma$-subadditive, i.e. $\psi\bigl(\bigvee_n p_n\bigr)\le \sum_n \psi(p_n)$ for every sequence $(p_n)_{n\in\N}$ in $P(M)$.  (In \cite{BunHa} it is shown that, conversely, every subadditive probability measure on $P(M)$ arises in this way.)

\begin{thm}\label{4.4}
Let $M$ be a $\sigma$-finite von Neumann algebra.  Then the following statements are equivalent.
\begin{enumerate}[{\rm(i)}]
\item $M$ is finite.
\item Every sequence $(x_n)_{n\in\N}$ in $M_{sa}^1$ converging $\sigma$-strongly to $0$ converges to $0$ w.r.t. $\tau_{os}(M_{sa}^1)$.
\item If $(p_n)_{n\in\N}$ is a sequence in $P(M)$ converging $\sigma$-strongly to $0$, then there exists a subsequence $(p_{n_i})$ such that
\[\limsup_{i}p_{n_i}=0\,.\]
\item $\tau_o(M_{sa}^1)|P(M)=\tau_o(P(M))$
\end{enumerate}
\end{thm}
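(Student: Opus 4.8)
The plan is to treat the equivalences (i)$\Leftrightarrow$(ii)$\Leftrightarrow$(iii) as one block and then graft (iv) onto it. Throughout, $\sigma$-finiteness lets me replace nets by sequences: by \propref{2.1} all the posets in sight satisfy $\tau_o=\tau_{os}$, by \propref{1.1} the $\tau_{os}$-convergence of a sequence is detected by order-convergent subsequences, and by Corollary~\ref{3.3} the topology $s(M,M_\ast)$ agrees with $\tau_o(\saM)$ on bounded sets. Since a finite $\sigma$-finite $M$ carries a faithful normal tracial state $\psi$, whose restriction to $P(M)$ is (as recalled before the theorem) a normal valuation and hence $\sigma$-subadditive, the trace is the main analytic tool on the finite side, while \lemref{4.2} and Example~\ref{4.1} carry the lattice-theoretic content.

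For (i)$\Rightarrow$(iii) I would argue that a $\sigma$-strongly null $(p_n)$ has $\psi(p_n)=\ro_\psi(p_n)^2\to0$, so a subsequence satisfies $\sum_i\psi(p_{n_i})<\infty$; $\sigma$-subadditivity gives $\psi\bigl(\bigvee_{i\ge k}p_{n_i}\bigr)\le\sum_{i\ge k}\psi(p_{n_i})\to0$, and since $\bigvee_{i\ge k}p_{n_i}\downarrow\limsup_i p_{n_i}$, normality and faithfulness of $\psi$ force $\limsup_i p_{n_i}=0$. For (ii)$\Rightarrow$(iii) I would use that, by (ii) and \propref{1.1}, a $\sigma$-strongly null sequence of projections has a subsequence order-converging to $0$ in $\saM^1$; dominating it by $z_k\downarrow0$ in $\saM^1$ and applying \lemref{4.2}{\rm(ii)} to $z_k\ge p_{n_i}$ for $i\ge k$ yields $\bigvee_{i\ge k}p_{n_i}\le z_k$, whence $\limsup_i p_{n_i}=0$. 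The converse (iii)$\Rightarrow$(ii) is the most computational step: after reducing to a positive sequence via $|x_n|$, for each threshold $t$ the spectral projection $q_n^{(t)}=\chi_{(t,1]}(x_n)$ satisfies $q_n^{(t)}\le t^{-1}x_n$ and is $\sigma$-strongly null by Cauchy--Schwarz, so (iii) applies at thresholds $t_m\downarrow0$; a diagonal extraction gives a subsequence with $\limsup_j q_{n_j}^{(t_m)}=0$ for every $m$, and then $z_k:=\bigwedge_m\bigl[t_m\mathds 1+(1-t_m)\bigvee_{j\ge k}q_{n_j}^{(t_m)}\bigr]$ is a sequence in $\saM^1$ with $x_{n_k}\le z_k$ and $z_k\downarrow0$, exhibiting order-convergence in $\saM^1$. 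Finally (iii)$\Rightarrow$(i) is the contrapositive: a non-finite $M$ has a properly infinite part containing a copy of $B(H)$ with $H$ infinite-dimensional and separable, and the sequence of Example~\ref{4.1} is $\sigma$-strongly null while every one of its subsequences has $\limsup\ge p\ne0$, so (iii) fails.

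To bring in (iv) I would first note that $\tau_o(\saM^1)|P(M)\subseteq\tau_o(P(M))$ is exactly {\rm (2.\ref{eq2})}, so (iv) is the reverse inclusion. Using $\sigma$-finiteness and \propref{1.1} this reduces to one comparison of convergences: every sequence of projections order-converging to a projection $x$ in $\saM^1$ should also order-converge to $x$ in $P(M)$. One half is automatic and purely lattice-theoretic: from $y_k\le p_k\le z_k$ with $z_k\downarrow x$ in $\saM^1$, \lemref{4.2}{\rm(ii)} gives $\bigvee_{n\ge k}p_n\le z_k$ and hence $\limsup_n p_n\le x$. On the finite side I would promote this to the matching bound $\liminf_n p_n\ge x$ using the faithful normal trace (this is the crux): $p_n\to x$ $\sigma$-strongly gives $\psi(p_n)\to\psi(x)$, and exploiting the normal valuation $\psi|_{P(M)}$ against $\limsup_n p_n\le x$ and faithfulness forces $\limsup_n p_n=\liminf_n p_n=x$; this gives $\tau_o(P(M))\subseteq\tau_o(\saM^1)|P(M)$ and so (i)$\Rightarrow$(iv). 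For the converse, the $B(H)$-copy produced by a non-finite $M$ supplies a sequence of projections that order-converges in $\saM^1$ but whose $\liminf$ in $P(M)$ is strictly smaller than its limit, so (iv)$\Rightarrow$(i).

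The hard part is exactly this $\liminf$ estimate, and it is where finiteness is indispensable. The asymmetry is structural: \lemref{4.2} controls only suprema of projections dominated by a positive contraction, so the $\limsup$ bound is free, whereas the dual $\liminf$ bound would require the subordinate approximants $\mathds 1-y_k$ to stay in $\saM^1$, which fails unless the $y_k$ are positive. Because $P(M)$ is not a sublattice of $\saM^1$ (projection joins and meets differ from any suprema and infima computed in $\saM^1$), \propref{1.6}{\rm(ii)} is unavailable and there is no formal shortcut; the positivity defect has to be absorbed by the trace. Dually, the genuine work on the infinite side is to check that the relevant projection construction really does order-converge in $\saM^1$ while its $\liminf$ in $P(M)$ collapses, which is what makes finiteness not merely sufficient but necessary for (iv).
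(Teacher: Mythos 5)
Your reorganization of the first block is partly an improvement: the direct argument (i)$\Rightarrow$(iii) via a tracial state and $\sigma$-subadditivity is correct and shorter than the paper's route (the paper proves (i)$\Rightarrow$(ii) first, by a spectral construction, and then (ii)$\Rightarrow$(iii)), and your (ii)$\Rightarrow$(iii) and (iii)$\Rightarrow$(i) agree with the paper. But your step (iii)$\Rightarrow$(ii) fails at the displayed formula $z_k:=\bigwedge_m\bigl[t_m\mathds 1+(1-t_m)\bigvee_{j\ge k}q_{n_j}^{(t_m)}\bigr]$. If this infimum is meant in the poset $\saM$, it need not exist: $\saM$ is a lattice only when $M$ is abelian (Sherman, cited in the paper), and by Kadison's anti-lattice theorem noncomparable self-adjoint operators in $B(H)$ have no greatest lower bound at all. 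The operands do pairwise commute (the projections $\bigvee_{j\ge k}q_{n_j}^{(t_m)}$ are nested in $m$), so you could instead compute $z_k$ inside the abelian von Neumann subalgebra they generate; but then the inequality you need, $x_{n_k}\le z_k$, no longer follows, because an element of $\saM$ lying below each of several commuting operators need not lie below their functional-calculus minimum (in $M_2(\C)$ there are self-adjoint $y$ with $y\le\mathrm{diag}(1,t)$ and $y\le\mathrm{diag}(t,1)$ whose largest eigenvalue exceeds $t$, so $y\not\le t\mathds 1$). This is precisely the difficulty that \lemref{4.3} in the paper is designed to avoid: tie the threshold to the index rather than intersecting over thresholds. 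Concretely, pick the subsequence with $\psi(x_{n_j})<4^{-j}$, set $q_j:=\chi_{(2^{-j},1]}(x_{n_j})$, note $\psi(q_j)\le 2^j\psi(x_{n_j})<2^{-j}$ so that $(q_j)$ is $\sigma$-strongly null, apply (iii) once to get $P_k:=\bigvee_{i\ge k}q_{j_i}\downarrow 0$, and verify that $z_k:=2^{-j_k}(\mathds 1-P_k)+P_k$ lies in $\saM^1$, dominates $x_{n_{j_k}}$ and decreases to $0$; no infimum over a second index is ever formed.

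The (iv) block is sketched exactly at the points where the work lies. For (i)$\Rightarrow$(iv), your ``crux'' is only asserted: saying that the normal valuation ``forces $\limsup_n p_n=\liminf_n p_n=x$'' is not an argument, and for the whole sequence it is also more than \propref{1.1} requires (a subsequence suffices) and more than one should expect to establish directly. The paper fills this gap by turning the valuation into the metric $d(p,q)=\psi(p\vee q)-\psi(p\wedge q)$, proving $d(p_n,p)\le 2\psi(y_n-x_n)\to 0$ via \lemref{4.2}, and then invoking the proof of \cite[p. 246, Theorem X.10.16]{Birkhoff2} to extract a subsequence order converging in $(P(M),\le)$; alternatively one can show $\psi(x-p_n\wedge x)\to 0$, pass to a subsequence with summable errors and use $\sigma$-subadditivity. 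Either way a substantive step must be supplied. For (iv)$\Rightarrow$(i) you have no witness at all: Example~\ref{4.1} cannot serve, since by its own item {\rm(iii)} that sequence does \emph{not} converge w.r.t. $\tau_{os}(\saM^1)$, whereas refuting (iv) requires a sequence of projections that \emph{does} order converge in $(\saM^1,\le)$ while no subsequence order converges in $(P(M),\le)$. The paper constructs such a sequence from scratch in $B(H)$: $p_n=q_n+f_{n+1}$, where $q_n$ is the projection onto $\mathrm{span}\{\sigma_n\xi_1+\gamma_n\xi_n\}$, sandwiched between $\sigma_n^4e_1-f_n\uparrow e_1$ and $e_1+f_n\downarrow e_1$ yet satisfying $\bigwedge_{i\ge n}p_i=0$; this construction, together with the reduction to $W^\ast$-subalgebras via \propref{1.6} and \propref{1.7}, is the actual content of that implication and is missing from your proposal.
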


\begin{proof}
{\rm (i)}$\Rightarrow${\rm(ii)}.  Let $(x_n)_{n\in\N}$ be a sequence in $M_{sa}^1$ convergent $\sigma$-strongly to $0$.  We shall first suppose that $x_n\ge 0$ for each $n$.  We need to exhibit a subsequence $(x_{n_i})_{i\in\N}$ of $(x_n)_{n\in\N}$ that order converges to $0$.  Since $M$ is finite and $\sigma$-finite, $M$ admits a faithful, normal, tracial state $\psi$.  Since $\psi(x_n^2)=\rho_\psi(x_n)^2\rightarrow 0$, we can extract a subsequence $(x_{n_i})_{i\in\N}$ of $(x_n)_{n\in\N}$ such that $\psi(x_{n_i})\le\sqrt{\psi(x_{n_i}^2)}<4^{-i}$.  For each $i\in\N$ and $\lambda\in\R$, let $e^i(\lambda)$ be the projection in $M$ corresponding to the characteristic function associated with $\mbox{sp}(x_{n_i})\cap (-\infty,\lambda]$ -- i.e. $\{e^i(\lambda)\}_{\lambda\in\R}$ is the spectral resolution of $x_{n_i}$.  Then
\begin{align*}
0\le x_{n_i}&\le 2^{-i}\,e^i(2^{-i})+(\mathds 1-e^i(2^{-i}))\\
&=2^{-i}\biggl(\bigwedge_{j\ge i}e^j(2^{-j})+e^i(2^{-i})-\bigwedge_{j\ge i}e^j(2^{-j})\biggr)+(\mathds 1-e^i(2^{-i}))\\
&\le 2^{-i}\bigwedge_{j\ge i} e^j(2^{-j})+e^i(2^{-i})-\bigwedge_{j\ge i} e^j(2^{-j})+\mathds 1-e^i(2^{-i})\\
&=2^{-i}\bigwedge_{j\ge i} e^j(2^{-j})+\bigvee_{j\ge i}\bigl(\mathds 1-e^{j}(2^{-j})\bigr).
\end{align*}
Let
\[y_i=2^{-i}\bigwedge_{j\ge i} e^j(2^{-j})+\bigvee_{j\ge i}\bigl(\mathds 1-e^{j}(2^{-j})\bigr).\]
Then $0\le x_{n_i}\le y_i\le\mathds 1$ and by Lemma \ref{4.3} we know that the sequence $(y_i)_{i\in\N}$ is decreasing.  Thus, $\bigwedge_{i\in\N}y_i$ exists in $M_{sa}$ and $\bigwedge_{i\in\N}y_i\ge 0$.  The normality of $\psi$ entails that $\psi\bigl(\bigwedge_{i\in\N}y_i\bigr)=\lim_{i\to\infty}\psi(y_i)$. Since $2^{-j}\bigl(\mathds 1-e^j(2^{-j})\bigr)\le x_{n_j}$, it follows that $\psi\bigl(\mathds 1-e^j(2^{-j})\bigr)\le 2^j\psi(x_{n_j})<2^{-j}$.

Since $\psi$ is $\sigma$-subadditive we can estimate:
\[\psi(y_i)\le 2^{-i}+\sum_{j\ge i}\psi\bigl(\mathds 1-e^j(2^{-j}\bigr)<2^{-i}+\sum_{j\ge i}2^{-j}=3\,(2^{-i}).\]
Thus, $\psi\bigl(\bigwedge_{i\in\N}y_i\bigr)=0$ and therefore, since $\psi$ is faithful, it follows that $\bigwedge_{i\in\N}y_i=0$, i.e. $(x_{n_i})_{i\in\N}$ is order convergent to $0$.

If not every element $x_n$  is  positive, then we can   consider the sequence $(|x_n|)_{n\in\N}$ which is again $\sigma$-strongly convergent to $0$.    Then $(|x_n|)_{n\in\N}$ has a subsequence $(|x_{n_i}|)_{i\in\N}$ that order converges to $0$, i.e. for which one can find a sequence $(y_i)_{i\in\N}$ in $M_+^1$ such that $0\le |x_{n_i}|\le y_i$ and $y_i\downarrow 0$.  The result then follows from the following inequality.
\[-\mathds 1\le-y_i\le-|x_{n_i}|\le x_{n_i}\le |x_{n_i}|\le y_i\le \mathds 1\qquad (\text{for all }i\in\N).\]

{\rm (ii)}$\Rightarrow${\rm(iii)}.  If $(p_n)_{n\in\N}$ is a sequence of projections that converges $\sigma$-strongly to $0$ then $p_n\xrightarrow{\tau_{os}(\saM^1)} 0$ by (ii). Therefore, by Proposition \ref{1.1}, $(p_n)_{n\in\N}$ has a subsequence $(p_{n_i})_{i\in\N}$  order converging  to $0$ in $(M^1_{sa},\le)$. Thus there is a sequence $(y_i)_{i\in\N}$ in $M^1_{sa}$ such that $0\le p_{n_i}\le y_i$ and $y_i\downarrow 0$.  From \lemref{4.2} we obtain that $\bigvee_{j\ge i}p_{n_j}\le y_i$ for every $i\in\N$
and therefore $0\le \bigwedge_{i\in\N}\bigvee_{j\ge i}p_{n_j}\le\bigwedge_{i\in\N}y_i=0$, i.e. $\limsup_i p_{n_i}=0$.

{\rm (iii)}$\Rightarrow${\rm(i)}.  This was shown in the paragraph before Lemma \ref{4.3}.

{\rm (iv)}$\Rightarrow${\rm(i)}.
If $N$ is a $W^\ast$-subalgebra of $M$ then $N_{sa}^1$ and $P(N)$ are $s(M,M_\ast)$-closed. The hypothesis together with Propositions \ref{1.6} and \ref{1.7} imply that 
\begin{multline*}
\tau_o(N_{sa}^1)|P(N)\supseteq \tau_o(\saM^1)|P(N)=\tau_o(P(M))|P(N)\\=\tau_o(P(N))\supseteq\tau_o(N_{sa}^1)|P(N).
\end{multline*}
Hence, in view of the discussion in the paragraph before Lemma \ref{4.3} it is enough to show that $\tau_o(M_{sa}^1)|P(M)\neq\tau_o(P(M))$ when $M=B(H)$ for  a separable infinite-dimensional Hilbert space $H$.

Let $(\theta_n)_{n\in\N}$ be a sequence in $(\pi/4,\pi/2)$ such that $\theta_n\uparrow \pi/2$, and let $\sigma_n:=\sin\theta_n$ and $\gamma_n:=\cos\theta_n$.  Fix an orthonormal basis $(\xi_n)_{n\in\N}$ of $H$ and define $\eta_n:=\sigma_n\xi_1+\gamma_n\xi_n$.  Denote by $e_n$ the projection of $H$ onto $\mbox{span}\{\xi_n\}$; $q_n$ the projection of $H$ onto $\mbox{span}\{\eta_n\}$ and $f_n$ the projection of $H$ onto $\overline{\mbox{span}}\{\xi_{n},\xi_{n+1}\dots\}$

We show that $\sigma_n^4 e_1-e_n\le q_n$ for every $n\in\N$.  It is enough to consider vectors in the two-dimensional subspace spanned by $\xi_1$ and $\xi_n$.  Thus we we can express $e_1, e_n, q_n$ in matrix form (relative to the vectors $\xi_1$ and $\xi_n$).  
Writing $q_n-\sigma_n^4 e_1+ e_n$ in matrix form:
\[
\begin{pmatrix}
\sigma_n^2-\sigma_n^4 &\gamma_n \sigma_n\\
\gamma_n \sigma_n &\gamma_n^2+1
\end{pmatrix}=
\begin{pmatrix}
\gamma_n^2 \sigma_n^2 & \gamma_n \sigma_n\\
\gamma_n \sigma_n & \gamma_n^2+1
\end{pmatrix}
\]
one sees that  $\sigma_n^4 e_1-e_n\le q_n$.  Thus 
\[x_n:=\sigma_n^4 e_1-f_n\le q_n+f_{n+1}\le e_1+f_n=:y_n\,,\]
the sequences $(x_n)_{n\in\N}$ and $(y_n)_{n\in\N}$ are in $\saM^1$, $x_n\uparrow e_1$ and $y_n\downarrow e_1$.  Thus $p_n:=q_n+f_{n+1}\to e_1$ w.r.t. $\tau_o(\saM^1)$.  
 On the other-hand $\bigwedge_{i\ge n} p_i =0$.

{\rm (i)}$\Rightarrow${\rm(iv)}.
As observed before,  $M$ admits a faithful, normal, tracial state $\psi$. Then $\psi|(P(M)$ is a valuation. Thus $d(p,q):=\psi(p\vee q)-\psi(p\wedge q)$ defines by \cite[p. 230, Theorem X.1.1]{Birkhoff2} a metric on $P(M)$. We first prove the following estimation:
If $x,y\in M_{sa}$ and $p,q\in P(M)$ with $x\leq p\leq y\leq 1$ and  $x\leq q\leq y$, then
$$d(p,q)\leq 2\psi(y-x).$$
In fact, $d(p,q)=(\psi(p\vee q)-\psi(p))+(\psi(p)-\psi(p\wedge q))=(\psi(p\vee q)-\psi(p))+(\psi(p\vee q)-\psi(q))\leq 2(\psi(y)-\psi(x))=2\psi(y-x).$

In the last inequality we used that by Lemma \ref{4.2} we have $p\vee q\leq y$. 

In view of {\rm(2.\ref{eq2})} and Proposition \ref{1.1}, for the proof of (iv) it suffices to show that any sequence $(p_n)_{n\in\N}$ in $P(M)$ order converging in $(M^1_{sa},\leq)$ to $p\in P(M)$ has a subsequence order converging in $(P(M),\leq)$. Let now $p_n,p\in P(M)$ and $x_n, y_n\in M^1_{sa}$ such that $x_n\uparrow p$, $y_n\downarrow p$ and $x_n\leq p_n\leq y_n$. Then $y_n-x_n\downarrow 0$ and $d(p_n,p)\leq 2\psi(y_n-x_n)\rightarrow 0$. Therefore $(p_n)_{n\in\N}$ converges to $p$ in the metric lattice $(P(M),d)$. It follows from the proof of \cite[p. 246, Theorem X.10.16]{Birkhoff2} that $(p_n)_{n\in\N}$ has a subsequence  order converging to $p$ in $(P(M),\leq)$.
\end{proof}


We remark that Theorem \ref{4.4} does not imply  that for finite, $\sigma$-finite algebras the restriction of $s(M,M_\ast)$ to $\saM^1$ (resp. $P(M)$) coincides with the order topology $\tau_o(\saM^1)$ (resp. $\tau_o(P(M))$) -- see Proposition \ref{p1}.  
We also note that in the proof of the implications {\rm(ii)}$\Rightarrow${\rm(iii)}, {\rm(iii)}$\Rightarrow${\rm(i)} and {\rm(iv)$\Rightarrow$\rm(i)}  the assumption that $M$ is $\sigma$-finite is not used. \\

{\bf Acknowledgement.}
This work was supported by  the ``Grant Agency of the Czech Republic"
  grant number P201/12/0290,  ``Topological and geometrical properties of Banach spaces and operator algebras".

\end{document}